\definecolor{darkblue}{rgb}{0,0,0.8}
\definecolor{darkgreen}{rgb}{0,0.4,0}
\newtheorem{thm}{Theorem}[section]
\newtheorem{prop}[thm]{Proposition}
\newtheorem{lem}[thm]{Lemma}
\newtheorem{cor}[thm]{Corollary}
\theoremstyle{definition}
\theoremstyle{remark}
\newtheorem{rem}[thm]{Remark}
\newtheorem{rems}[thm]{Remarks}
\newtheorem{Step}{Step}
\numberwithin{equation}{section}
\newcommand{\car}{\mathrm{char}} 
\newcommand{\Id}{\mathrm{Id}}
\newcommand{\Ker}{\mathrm{Ker}}
\newcommand{\Hom}{\mathrm{Hom}}
\newcommand{\cl}{\mathrm{cl}}
\newcommand{\qc}{\mathrm{qc}}
\newcommand{\qfh}{\mathrm{qfh}}
\newcommand{\N}{\mathrm{N}}
\newcommand{\cd}{\mathrm{cd}}
\newcommand{\sing}{\mathrm{sing}}
\newcommand{\colim}{\mathrm{colim}}
\newcommand{\et}{\mathrm{\acute{e}t}}
\newcommand{\red}{\mathrm{red}}
\newcommand{\pr}{\mathrm{pr}}
\newcommand{\an}{\mathrm{an}}
\newcommand{\Spec}{\mathrm{Spec}}
\newcommand{\Frac}{\mathrm{Frac}}
\newcommand{\Gal}{\mathrm{Gal}}
\newcommand{\RR}{\mathrm{R}}
\newcommand{\isoto}{\myxrightarrow{\,\sim\,}}
\def\myrightarrow{{\setbox\z@\hbox{$\rightarrow$}\dimen0\ht\z@\multiply\dimen0 6\divide\dimen0 10\ht\z@\dimen0\box\z@}}
\def\myrightarrowfill@{\arrowfill@\relbar\relbar\myrightarrow}
\newcommand{\myxrightarrow}[2][]{\ext@arrow 0359\myrightarrowfill@{#1}{#2}}
\def\loccit{\emph{loc}.\kern3pt \emph{cit}.{}\xspace}
\def\eg{e.g.\kern.3em}
\def\ie{i.e.,\ }
\def\resp {\text{resp.}\kern.3em}
\def\A{\mathbb A}
\def\Z{\mathbb Z}
\def\C{\mathbb C}
\def\L{\mathbb L}
\def\M{\mathbb M}
\def\P{\mathbb P}
\def\R{\mathbb R}
\def\N{\mathbb N}
\def\cN{\mathcal{N}}
\def\cM{\mathcal{M}}
\def\cO{\mathcal{O}}
\def\cF{\mathcal{F}}
\def\cH{\mathcal{H}}
\def\cM{\mathcal{M}}
\def\cS{\mathcal{S}}
\def\cU{\mathcal{U}}
\def\cI{\mathcal{I}}
\def\ci{\mathcal{C}^{\infty}}
\def\whK{\widehat{K}}
\def\whR{\widehat{R}}
\def\hh{\hat{h}}
\def\hq{\hat{q}}
\def\kp{\mathfrak{p}}
\def\kq{\mathfrak{q}}
\def\km{\mathfrak{m}}
\def\tf{\tilde{f}}
\def\ti{\tilde{i}}
\def\tj{\tilde{j}}
\def\ts{\tilde{s}}
\def\talpha{\tilde{\alpha}}
\def\tbeta{\tilde{\beta}}
\def\oY{\overline{Y}}
\def\oX{\overline{X}}
\def\oW{\overline{W}}
\def\wS{\widetilde{S}}
\def\wU{\widetilde{U}}
\def\wX{\widetilde{X}}
\def\wY{\widetilde{Y}}
\def\wZ{\widetilde{Z}}
\begin{document}

\title[]{\'Etale cohomology of algebraic varieties over Stein compacta}

\author{Olivier Benoist}
\address{D\'epartement de math\'ematiques et applications, \'Ecole normale sup\'erieure, CNRS,
45 rue d'Ulm, 75230 Paris Cedex 05, France}
\email{olivier.benoist@ens.fr}

\renewcommand{\abstractname}{Abstract}
\begin{abstract}
We prove a comparison theorem between the \'etale cohomology of algebraic varieties over Stein compacta and the singular cohomology of their analytifications. We deduce that the field of meromorphic functions in a neighborhood of a connected Stein compact subset of a normal complex space of dimension~$n$ has cohomological dimension~$n$. As an application of $\Gal(\C/\R)$\nobreakdash-equivariant variants of these results,  we obtain a quantitative version of Hilbert's 17th problem on compact subsets of real\nobreakdash-analytic spaces.
\end{abstract}
\maketitle

\section*{Introduction}

\subsection{Hilbert's 17th problem}

Hilbert discovered in \cite{Hilbert} that real polynomials $f\in\R[x_1,\dots,x_n]$ that are positive semidefinite (\ie that only take nonnegative values on $\R^n$) may not always be written as sums of squares of polynomials. He however conjectured, in his celebrated $17$th problem, that this issue may be resolved by allowing denominators: the polynomial~$f$ should always be a sum of squares of rational functions.
This problem was solved positively by E. Artin \cite[Satz~4]{Artin} in~1927.  
Forty years later, Pfister \cite[Theorem 1]{Pfister} obtained a quantitative improvement of Artin's theorem bounding the number of squares required. It states that a positive semidefinite $f\in\R[x_1,\dots,x_n]$ is a sum of $2^n$ squares in $\R(x_1,\dots,x_n)$.

\subsection{The real-analytic variant}

As an application of the main results of this article, we prove a quantitative theorem \`a la Pfister in real-analytic geometry. 

\begin{thm}[Theorem \ref{sosreal}]
\label{th1}
Let $M$ be a normal real-analytic variety of pure dimension $n$. Let $K\subset M$ be compact.  
Let $f:M\to \R$ be a nonnegative real-analytic function. Then there exists an open neighborhood $U$ of $K$ in $M$ such that~$f|_U$ is a sum of $2^n$ squares of real-analytic meromorphic functions on $U$.
\end{thm}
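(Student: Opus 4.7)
\emph{Setup via complexification.}
The strategy is to reduce Theorem~\ref{th1} to a purely field-theoretic assertion about the field $F$ of real-analytic meromorphic functions near $K$. After shrinking $M$ around $K$, I would realize $M$ as the fixed locus of an antiholomorphic involution $\sigma$ on a normal complex-analytic space $X$ of dimension $n$, and choose a $\sigma$-invariant connected Stein compact $L\subset X$ with $L\cap X^\sigma=K$. Let $\cM$ be the field of meromorphic germs along $L$, which is a field by the paper's earlier deduction that meromorphic functions near a connected Stein compact of a normal complex space of dimension $n$ form a field of cohomological dimension~$n$. The involution $\sigma$ acts on $\cM$; setting $F:=\cM^\sigma$ one has $\cM=F[\sqrt{-1}]$, and $F$ is exactly the field of real-analytic meromorphic germs near $K$.

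\emph{Cohomological dimension and the Pfister bound.}
The $\Gal(\C/\R)$-equivariant version of the paper's main consequence, advertised in the abstract, yields $\cd_2(\cM)\leq n$, so $F$ is formally real of virtual $2$-cohomological dimension $\leq n$. I would then invoke the standard quantitative form of Hilbert's 17th problem in this generality: for a formally real field $F$ with $\cd_2(F[\sqrt{-1}])\leq n$, every totally positive element of $F$ is a sum of $2^n$ squares. This is Pfister's inequality in its modern Galois-cohomological guise: the bound on $\cd_2$ forces the $(n{+}1)$-st power of the fundamental ideal in the Witt ring of $F$ to vanish, via the Milnor conjecture (Orlov--Vishik--Voevodsky), which in turn controls the Pythagoras number by $2^n$.

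\emph{Total positivity, and the main obstacle.}
What remains is to check that $f$, regarded as an element of $F$, is totally positive, i.e., positive under every ordering of $F$. The hypothesis only gives $f(x)\geq 0$ for every real point $x\in M$, and this must be promoted to positivity in every abstract ordering of $F$. In the purely algebraic setting over $\R$ this is free from the Artin--Lang homomorphism theorem (equivalently Tarski--Seidenberg); here one needs a real-analytic Artin--Lang-type statement, namely that every ordering of $F$ has its center at a point of~$K$, so that pointwise nonnegativity on $K$ transfers to total positivity. Such results are available in the literature on real-analytic rings (Risler, Ruiz, Andradas--Br\"ocker--Ruiz), and compactness of $K$ is essential to prevent orderings from ``escaping'' to the boundary. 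This is the principal technical hurdle; once it is in hand, combining total positivity with the Pfister bound writes $f=g_1^2+\cdots+g_{2^n}^2$ in $F$, and a single common denominator then gives a neighborhood $U$ of $K$ on which the $g_i$ are real-analytic meromorphic and the identity holds, yielding Theorem~\ref{th1}.
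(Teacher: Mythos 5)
Your proposal is correct in outline and shares the paper's overall architecture (complexify $M$ near $K$ \`a la Cartan--Grauert--Tognoli, bound $\cd_2$ of $\cM(K)=F[\sqrt{-1}]$ by $n$ via the comparison theorem, then apply the Milnor conjecture/Pfister mechanism of Proposition \ref{propVoe} to get the exponent $2^n$), but it diverges at the qualitative step, i.e., in showing that $f$ is totally positive in $F$. You propose to import a real-analytic Artin--Lang/specialization theorem (every ordering of $F$ is centered on $K$, compactness preventing escape to the boundary), citing Ruiz, Jaworski and Andradas--Br\"ocker--Ruiz; such results do exist at the required generality --- indeed the paper cites \cite[Theorem~1]{Ruiz} and \cite[Theorem~1]{Jaworski2} for exactly this qualitative statement, and the remark after Theorem \ref{sosArtincx} concedes that the qualitative part ``can be proven by the more elementary methods of \cite{Jaworski2}.'' The paper instead makes the qualitative step internal to its own machinery: in Theorem \ref{sosArtincx} it adjoins $\sqrt{-f}$, realizes the resulting quadratic extension as a $G$-equivariant analytic double cover $p:T\to S$ via Proposition \ref{eqcatG}, observes that $T^G\subset\{f=0\}$ is nowhere dense, and then applies the $G$-equivariant cohomological dimension bound (Corollary \ref{cdfieldGcor}) a second time to conclude that $\cM(L)^G$ has finite cohomological dimension and hence cannot be ordered, whence $f$ is a sum of squares by elementary Witt-ring theory. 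What the paper's route buys is uniformity: it works verbatim in the $G$-equivariant setting of Theorem \ref{soscx} even when $S^G=\varnothing$, where an Artin--Lang argument about orderings centered at real points has no traction; what your route buys is a shorter path for the genuinely real-analytic statement of Theorem \ref{th1}, at the cost of an external input. If you take your route, be precise about the reduction making $F$ a field (pass to a $G$-invariant connected component, or a $G$-orbit of components, of a Stein compact neighborhood, rather than insisting on $L\cap X^\sigma=K$), and cite the Artin--Lang statement in the exact generality of germs at a compact subset of a normal pure-dimensional real-analytic variety.
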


If $M$ is compact and $K=M$, Theorem \ref{th1} states that a nonnegative real-analytic function on $M$ is a sum of $2^n$ squares of real-analytic meromorphic functions on~$M$.

Much less is known on Hilbert's 17th problem in the real-analytic setting, first considered in \cite{BR}
 (see \cite{ABFbook} for an up-to-date survey), than in the algebraic case.
A nonnegative real-analytic function on a normal and pure-dimensional real-analytic variety $M$
is a sum of squares of real-analytic meromorphic functions  in dimension $\leq 2$ (see \cite[Corollary 2]{Jaworski1}
and \cite[Theorem 1]{ADR}),  
under a compactness hypothesis (see \cite[Theorem~1]{Ruiz} and \cite[Theorem~1]{Jaworski2})
or, when~$M$ is a manifold, under a discreteness hypothesis on the zeros of~$f$ \cite[Theorem~1]{BKS}.
However, whether a nonnegative real-analytic function on~$\R^n$ is a sum of squares of real-analytic meromorphic functions is still an open problem for~$n\geq 3$.

In addition, quantitative results are known in dimension $\leq 2$.  A nonnegative real-analytic function on a normal real-analytic surface $M$ is a sum of $3$ squares of real-analytic functions if $M$ is a manifold \cite[Corollary 2]{Jaworski1}, and a sum of $5$ squares of real-analytic meromorphic functions in general (see \cite[Theorem~1.2]{ABFR1} or the more general \cite[Theorem 1.3]{Fernando}).
No bounds on the required number of squares were known if $n\geq 3$ (except in the local case, for which see \cite[Theorem~0.2]{Henselian}). Theorem \ref{th1} rectifies this situation, under a compactness hypothesis.

Although the qualitative content of Theorem~\ref{th1} is not new (see \cite[Theorem~1]{Jaworski2}), the quantitative bounds it provides (more precisely, that these bounds only depend on the dimension of $M$) do imply new cases of (the qualitative version of) the real-analytic Hilbert's 17th problem. Indeed, work of Acquistapace, Broglia, Fernando and Ruiz \cite[Proposition~1.8]{ABFR2} readily implies the following.

\begin{cor}
\label{cor1}
Let $f$ be a nonnegative real-analytic function on a real-analytic manifold. If the zero set of $f$ is a disjoint union of compact sets, then $f$ is a finite sum of squares of real-analytic meromorphic~functions.
\end{cor}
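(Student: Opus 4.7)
The plan is to derive Corollary~\ref{cor1} by applying Theorem~\ref{th1} to each compact component of the zero locus of~$f$ and then patching the resulting local decompositions via \cite[Proposition~1.8]{ABFR2}. The crucial input is the uniformity of the bound $2^n$ in Theorem~\ref{th1}: it depends only on the dimension~$n$ of~$M$ and not on the chosen compact subset.

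First, I would write the zero set $Z=f^{-1}(0)\subset M$ as a disjoint union $Z=\bigsqcup_{i\in I}Z_i$ of compact subsets, as granted by hypothesis. Since $Z$ is closed in~$M$ and the~$Z_i$ are pairwise disjoint compacts, one can arrange pairwise disjoint open neighborhoods $W_i\supset Z_i$. Applying Theorem~\ref{th1} to each pair $(W_i,Z_i)$ yields, after shrinking to a smaller neighborhood $U_i\subset W_i$ of~$Z_i$, a representation of $f|_{U_i}$ as a sum of $2^n$ squares of real-analytic meromorphic functions on~$U_i$. Since the $U_i$ are pairwise disjoint, these local representations assemble into a single decomposition of $f|_U$ as a sum of $2^n$ real-analytic meromorphic squares, where $U:=\bigsqcup_i U_i$ is an open neighborhood of~$Z$ in~$M$.

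On the complement $V:=M\setminus Z$ the function~$f$ is strictly positive. The final step is to invoke \cite[Proposition~1.8]{ABFR2}, whose role is precisely to combine a sum-of-squares decomposition of~$f$ on a neighborhood of its zero set with the positivity of~$f$ on the complement, producing a global representation of~$f$ on~$M$ as a finite sum of squares of real-analytic meromorphic functions.

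The essential content, and the step for which Theorem~\ref{th1} is indispensable, is the uniformity of the bound $2^n$. Earlier qualitative results on the real-analytic Hilbert 17th problem would produce, near each~$Z_i$, a sum-of-squares decomposition whose length depends on~$i$; summing over a possibly infinite index set~$I$ would then not yield a finite expression. The dimension-dependent but $i$-independent bound furnished by Theorem~\ref{th1} is exactly what renders the hypothesis of \cite[Proposition~1.8]{ABFR2} verifiable in this setting, and thus what allows the corollary to go through for arbitrary real-analytic manifolds whose zero set is a (possibly infinite) disjoint union of compacts.
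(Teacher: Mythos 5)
Your argument is correct and coincides with the proof the paper intends: the paper gives no details beyond citing Theorem~\ref{th1} and \cite[Proposition~1.8]{ABFR2}, and your write-up supplies exactly that reduction, correctly identifying the uniformity of the bound $2^n$ (independent of the component $Z_i$) as the essential new input that makes the sum-of-squares representation near the whole zero set finite. The only implicit point is the existence of pairwise disjoint open neighborhoods of the pieces $Z_i$, which holds under the intended reading of ``disjoint union'' as a decomposition of the zero set into open-and-closed compact pieces.
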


The best results to date in this vein handled the case where the zero locus of~$f$ is the union of a compact set and of a discrete set (see \cite[Theorem~2]{Jaworski3} or \cite[Corollary~1.10]{ABFR2}), or concerned infinite sums of squares \cite[Corollary 1.9]{ABFR2}.
We also refer to \cite[Theorem 1.1]{ABFmult} for quantitative results in this direction.

\subsection{From real-analytic to complex-analytic geometry}

In real algebraic geometry, it is very important to consider not only the sets of real points of real algebraic varieties,  but also their sets of complex points, endowed with the action of $G:=\Gal(\C/\R)$ by complex conjugation. For the same reason, it is crucial for our proof of Theorem~\ref{th1} to work in the setting of $G$-equivariant complex-analytic geometry (see \S\ref{Geqpar} for our conventions) instead of real-analytic geometry.

This point of view also leads to
a more general theorem on sums of squares, from which Theorem~\ref{th1} is easily derived using the successive works of Cartan \cite{Cartananal}, Grauert \cite{GrauertLevi} and Tognoli \cite{Tognoli} on complexifications of real-analytic spaces. Recall that a compact subset of a complex space is said to be Stein if it admits a basis of Stein open neighborhoods.

\begin{thm}[Theorem \ref{soscx}]
\label{th2}
Let $S$ be a reduced $G$-equivariant Stein space of dimension $n$.  Let $K\subset S$ be a $G$\nobreakdash-invariant Stein compact subset.  Any $G$-invariant holomorphic function on $S$ which is nonnegative on $S^G$ is a sum of~$2^n$ squares of $G$-invariant meromorphic functions in a neighborhood of $K$.
\end{thm}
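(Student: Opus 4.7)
The plan is to deduce the theorem from two ingredients: the $G$-equivariant variant of this paper's cohomological dimension result for meromorphic function fields of Stein compacta, and Pfister's theorem (for a formally real field $F$ with $\cd_2 F(\sqrt{-1}) \le n$, the Pythagoras number satisfies $p(F)\le 2^n$). A preliminary $G$-equivariant normalization allows one to reduce to the case $S$ normal, since the formation of (sums of squares of) meromorphic functions, and the property of being nonnegative on $S^G$, descend from the normalization.

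Let $E$ be the colimit, over $G$-invariant Stein open neighborhoods $U$ of $K$, of the field of meromorphic functions on $U$, and set $F:=E^G$. On components of $S$ near $K$ that meet $S^G$, the field $F$ is formally real (any nonzero sum of squares of $G$-invariant meromorphic functions would be nonnegative and nonzero at a $G$-fixed point), and $E=F(\sqrt{-1})$; on components not meeting $S^G$, the element $\sqrt{-1}$ is itself $G$-invariant, so every $G$-invariant meromorphic function is a sum of two squares of such, and the bound $2^n$ is automatic for $n\ge 1$. The $G$-equivariant cohomological dimension theorem announced in the abstract gives $\cd_2(E)\le n$, so $F$ has virtual $2$-cohomological dimension at most $n$, and Pfister's theorem \cite{Pfister} then yields that any sum of squares in $F$ is a sum of $2^n$ squares.

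What is left is the qualitative statement that $f$, being nonnegative on $S^G$, is already a sum of squares in $F$ in some neighborhood of $K$. By Artin's criterion this is equivalent to $f$ being nonnegative in every ordering of $F$, and hence reduces to showing that every ordering of $F$ is realized by a point of $S^G$ inside some $G$-invariant Stein neighborhood of $K$. This is an analytic Positivstellensatz, on the model of Jaworski's real-analytic version \cite{Jaworski2}, where the compactness of $K$ is essential. I expect this qualitative step---transplanted to the $G$-equivariant complex-analytic framework via the Stein formalism developed earlier in the paper---to be the principal technical obstacle, rather than the cohomological dimension estimate itself.
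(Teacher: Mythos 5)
Your reduction to $S$ normal and your quantitative step (bound $\cd_2$ of $\cM(K)=\cM(K)^G(\sqrt{-1})$ by $n$ via the cohomological dimension theorem, then convert ``sum of squares'' into ``sum of $2^n$ squares'') match the paper's Proposition \ref{propVoe} and the proof of Theorem \ref{soscx} --- though note that for a general field this last conversion is not literally Pfister's theorem but requires Arason's theorem together with Voevodsky's proof of the Milnor conjectures, which is how the paper proves Proposition \ref{propVoe}. The genuine gap is the step you explicitly defer: proving that $f$ nonnegative near $K^G$ is a sum of squares in $\cM(K)^G$ at all. You propose to do this by an Artin--Lang/Positivstellensatz argument realizing every ordering of $\cM(K)^G$ by a point of $S^G$, transplanted from Jaworski's real-analytic setting, and you correctly flag this as the main technical obstacle --- but you do not supply it, so the proof is incomplete as written.

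The paper closes this gap (Theorem \ref{sosArtincx}) without any Positivstellensatz, by reusing the cohomological dimension theorem itself on a double cover. After reducing to $S$ normal with $K/G$ connected and $f\in\cO(S)^G$, let $p:T\to S$ be the $G$-equivariant analytic covering corresponding to the \'etale algebra $\cM(S)^G[x]/\langle x^2+f\rangle$. Since $-f$ becomes a square in $\cO(T)^G$, nonnegativity of $f$ on $S^G$ forces $T^G\subset\{f=0\}$, a nowhere dense analytic subset of $T$; this is exactly the hypothesis of Corollary \ref{cdfieldGcor}, which then shows that $\cM(L)^G$ has finite cohomological dimension for a $G$-orbit $L$ of components of $p^{-1}(K)$, hence is not formally real. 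Elementary quadratic form theory (Lam) then gives that $f$ is a sum of squares in $\cM(K)^G$, whether $\cM(L)^G$ equals $\cM(K)^G$ or $\cM(K)^G(\sqrt{-f})$. So the ``principal technical obstacle'' you anticipate is dissolved by the same cohomological input, applied to $T$ rather than to $S$; no realization of orderings by points is needed. A further small slip: on a $G$-stable component with empty real locus, the constant $\sqrt{-1}$ is \emph{not} $G$-invariant (conjugation acts antilinearly); the correct argument there is again that $\cM(K)^G$ is not formally real by Corollary \ref{cdfieldGcor}, so every element is a sum of squares.
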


We believe that this $G$-equivariant variation on the analytic Hilbert's 17th problem is novel, and that the new statements it comprises (for instance when $S=\C^n$ and $K\subset S$ is the closed unit ball) are of interest. We insist that Theorem \ref{th2} is already interesting and nontrivial when $S^G=\varnothing$, in which case the nonnegativity hypothesis is automatically satisfied.

\subsection{Cohomological dimension of fields of meromorphic functions}

It has been known since Voevodsky's proof of the Milnor conjectures \cite{Voevodsky} that quadratic forms over a field are largely governed by the Galois cohomology of the field.  As a consequence
of these results, the validity of Theorem \ref{th2} is controlled by the vanishing of a single Galois cohomology class of degree $n+1$ (see \cite[Proposition~2.1]{Henselian}). 
Using this point of view,  Theorem \ref{th2} is a consequence of an upper bound for the cohomological dimension of fields of $G$-invariant meromorphic functions on $G$-equivariant Stein compacta (Corollary \ref{cdfieldGcor}).
In this introduction,  let us only state a non-$G$-equivariant version of this theorem.

\begin{thm}[Corollary \ref{cdfieldcor} and Remark \ref{remcd} (ii)]
\label{th3}
Let $K$ be a connected Stein compact subset of a normal Stein space~$S$ of dimension $n$.  The field $\cM(K)$ of germs of meromorphic functions in a neighborhood of $K$ has cohomological dimension $n$.
\end{thm}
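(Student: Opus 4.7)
The plan is to prove the two inequalities $\cd(\cM(K))\leq n$ and $\cd(\cM(K))\geq n$ separately, the upper bound being the main content.

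\textbf{Upper bound.} I would deduce this from the étale-versus-singular comparison theorem announced in the abstract. Since $K$ is connected and $S$ is normal, connected Stein neighborhoods of $K$ are irreducible, so $\cO(K):=\colim_{U\supset K}\cO(U)$ is an integral domain with fraction field $\cM(K)$. Standard compatibility of étale cohomology with cofiltered limits of affine schemes gives, for any finite abelian group $A$,
\[
H^i_{\et}(\Spec\cM(K),A)=\colim_{f\in\cO(K)\setminus\{0\}}H^i_{\et}(\Spec\cO(K)[1/f],A).
\]
Writing $\Spec\cO(K)[1/f]=V(xf-1)\subset\A^1_{\cO(K)}$ exhibits it as an affine variety over $\cO(K)$, to which the comparison theorem applies: its étale cohomology is identified with the singular cohomology of its analytification, which (after choosing a spreading of $f$ to some $\cO(U)$ and passing to the colimit over Stein neighborhoods) is the open subset $K\cap\{f\neq 0\}$ of $K$. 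Since a Stein compact of complex dimension $n$ has topological cohomological dimension at most $n$---by the Andreotti--Frankel theorem in the smooth case, or Hamm's theorem in general---the colimit on the right vanishes for $i>n$.

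\textbf{Lower bound.} The aim is to construct a nonzero class in $H^n(\cM(K),\mu_2^{\otimes n})$. If $K\cap S^{\sm}\neq\varnothing$, pick a smooth point $p\in K$; via Cartan's theorem~A on some connected Stein neighborhood $U\supset K$, choose $z_1,\dots,z_n\in\cO(U)\subset\cM(K)$ whose differentials at $p$ form a basis of $T_p^*S$. The cup product of Kummer classes $[z_1]\cup\cdots\cup[z_n]\in H^n(\cM(K),\mu_2^{\otimes n})$ is then nonzero, as its image under $\cM(K)\to\cM_p\hookrightarrow\C((z_1))\cdots((z_n))$ (the last map being iterated Laurent expansion) generates $H^n$ of this $n$-dimensional higher local field, which is $\Z/2$. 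The remaining case $K\subset S^{\sing}$ is handled similarly using a system of parameters at any $p\in K$, combined with the Henselian local result of \cite{Henselian}.

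\textbf{Main obstacle.} The principal technical hurdle lies in the upper bound: one must carefully identify the analytification of $\Spec\cO(K)[1/f]$ with an open subset of a Stein neighborhood of $K$ in the sense required by the comparison theorem, and one needs the Andreotti--Frankel-type bound on the topological cohomological dimension of possibly singular Stein compacta. The singular case of the lower bound is secondary but also requires care.
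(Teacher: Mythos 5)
Your strategy for the upper bound is essentially the paper's (Theorem \ref{cdaffine} $\Rightarrow$ Theorem \ref{cdfield} $\Rightarrow$ Corollary \ref{cdfieldcor}): write $\cM(K)$ as a filtered colimit of the rings $\cO(K)[1/f]$, identify their \'etale cohomology with the singular cohomology of $U\setminus\{f=0\}$ for shrinking Stein neighborhoods $U$ via the comparison theorem, and invoke the Andreotti--Frankel/Hamm bound. There is, however, a genuine logical gap: the vanishing of $H^i_{\et}(\Spec\cM(K),A)$ for $i>n$ and all \emph{constant} finite coefficients $A$ does not imply $\cd(\cM(K))\leq n$. Cohomological dimension is tested against all finite Galois modules (equivalently, by Serre's criterion, against all finite simple $p$-torsion modules, or against the trivial module over all open subgroups), and the implication fails already for profinite groups: for $\ell$ odd and $p\equiv 1\pmod\ell$, the group $G=\Z_\ell\ltimes\Z_p^2$ (with $\Z_\ell$ acting faithfully through $\Z/\ell\subset\Z_p^\times$) satisfies $H^i(G,\Z/m)=0$ for all $i\geq 2$ and all $m$, yet $\cd_p(G)=2$. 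Two repairs are available: either run the colimit argument with arbitrary locally constant constructible coefficients on $\Spec\cO(K)[1/f]$ --- this is why Theorem \ref{th4} is stated for constructible sheaves and why the paper bounds the \'etale cohomological dimension of the affine schemes $Y_{i,h}$ for \emph{all} torsion sheaves (using the Artin vanishing theorem for weakly constructible sheaves on Stein spaces rather than just Hamm's theorem) before passing to the colimit --- or repeat your constant-coefficient computation over every finite separable extension of $\cM(K)$, which Proposition \ref{eqcat} identifies with a field $\cM(L)$ of the same type. As written, the upper bound is incomplete.

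For the lower bound, your symbol computation $[z_1]\cup\dots\cup[z_n]\neq 0$ via the embedding of $\Frac(\cO_{S,p})$ into an iterated Laurent series field is correct at a smooth point $p\in K$ and is a more explicit variant of what the paper does. The paper instead observes (Remark \ref{remcd} (ii)) that $\cO(K)_s\to\cO_{S,s}$ is a flat local morphism inducing an isomorphism on cotangent spaces of maximal ideals, so that $\cO(K)_s$ is noetherian local of dimension $n$ with residue field $\C$ and fraction field $\cM(K)$, and then quotes the bound $\cd(\Frac(A))\geq\dim(A)$ from SGA~4, X, Corollaire~2.5; this treats smooth and singular points of $K$ uniformly. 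Your case $K\subset S^{\sing}$ (which does occur, e.g.\ when $K$ is a single singular point) is only gestured at: ``a system of parameters plus the Henselian local result'' still requires showing that the resulting class is nonzero in $H^n(\cM(K),\mu_2^{\otimes n})$, and the cleanest route is precisely the flatness statement of Lemma \ref{flat} combined with the SGA~4 dimension bound.
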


In dimension $1$, Theorem \ref{th3} is attributed to M. Artin by Guralnick (\cite[Proposition 3.7]{Guralnick}, see also \cite[Proposition A.6]{tight}), with no compactness hypothesis. It is new in dimension~$\geq 2$. 
Theorem \ref{th3} is an algebraic statement concerning a field of analytic origin, and its proof uses a mixture of analytic and algebraic tools.  As such, it is deeper than its algebraic counterpart \cite[II, Proposition~11]{CG} bounding the cohomological dimension of function fields of algebraic varieties. 

We also obtain bounds on the \'etale cohomological dimension of affine varieties over (possibly $G$-equivariant) Stein compacta (see Theorems \ref{cdaffineG} and \ref{cdaffine}).
We refer to the work of Bhatt and Mathew \cite[Theorem~7.3 and Remark 7.4]{BM} for related bounds in non-archimedean analytic geometry.

\subsection{\'Etale and singular cohomology}

By a complex-analytic incarnation of the weak Lefschetz theorem due to Andreotti and Frankel \cite[\S 2]{AF} in the nonsingular case and to Hamm \cite[Satz 1]{Hamm} in general,  a Stein space of dimension~$n$ has the homotopy type of a CW complex of dimension~$\leq n$.
Its singular cohomology therefore vanishes in degree $>n$.
Our strategy of proof of Theorem \ref{th3} is to transfer this topological information on the singular cohomology of a Stein compactum to algebraic information on the \'etale cohomology of its ring of holomorphic functions, and eventually on the Galois cohomology of its field of meromorphic functions.

 In algebraic geometry, such a transfer tool exists in the shape of M.~Artin's comparison theorem \cite[XVI, Th\'eor\`eme 4.1]{SGA43} between the \'etale cohomology of a complex algebraic variety $X$ and the singular cohomology of its analytification~$X^{\an}$.
In order to implement the strategy described above, we prove the following analogue of Artin's comparison theorem in analytic geometry.

\begin{thm}[Theorem \ref{Artincompcx}]
\label{th4}
Let $S$ be a Stein space. Let $X$ be an $\cO(S)$\nobreakdash-scheme of finite type and let $\L$ be a torsion \'etale abelian sheaf on $X$. Assume that $X$ is proper over $\cO(S)$ or that $\L$ is constructible. 
If one lets $U$ run over all Stein open neighborhoods of a Stein compact subset $K$ of $S$, the change of topology morphisms
\begin{equation*}
\underset{K\subset U}{\colim}\,\,H^k_{\et}(X_{\cO(U)},\L_{\cO(U)})\to\underset{K\subset U}{\colim}\,\,H^k((X_{\cO(U)})^{\an},\L^{\an})
\end{equation*}
are isomorphisms for $k\geq 0$.
\end{thm}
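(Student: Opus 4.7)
The plan is to reduce this analytic comparison theorem to M.~Artin's algebraic comparison theorem \cite[XVI, Th\'eor\`eme 4.1]{SGA43} fiberwise, via proper base change applied simultaneously on the \'etale and topological sides. A key preliminary is to absorb the colimit into the base ring: set $R := \cO(K) = \colim_{K\subset U} \cO(U)$, the ring of germs of holomorphic functions along $K$, which is Noetherian by a theorem of Frisch. Because the transition maps $\cO(U') \to \cO(U)$ are flat and either $\L$ is constructible or the morphism is proper, \'etale cohomology commutes with this colimit, so the left-hand side is naturally $H^k_{\et}(X_R, \L_R)$; the right-hand side admits a parallel interpretation as the singular cohomology of the ``germ along $K$'' of the complex-analytic family $(X_{\cO(U)})^{\an}\to U$. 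It then suffices to produce an isomorphism at the level of these limits.

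For the proper case, I would argue as follows. The analytification $f^{\an}\colon (X_{\cO(U)})^{\an}\to U$ is a proper morphism of complex spaces whose fiber over $s\in U$ is identified with $(X_{\kappa(s)})^{\an}$. Proper base change in complex analytic geometry (Grauert) gives that the stalk of $R^k f^{\an}_*\L^{\an}$ at $s$ is $H^k((X_{\kappa(s)})^{\an}, \L)$, so restricting to $K$ and passing to the limit over shrinking neighborhoods $U$ computes the right-hand side as sections over $K$ of a constructible sheaf with these stalks. On the algebraic side, proper base change for \'etale cohomology, applied to $X_R \to \Spec R$ and the specialization $R \to \kappa(s) = \C$ at each $s\in K$, yields $H^k_{\et}(X_{\kappa(s)}, \L)$ as the analogous stalks. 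M.~Artin's comparison theorem furnishes, for each $s\in K$, an isomorphism $H^k_{\et}(X_{\kappa(s)}, \L) \cong H^k((X_{\kappa(s)})^{\an}, \L)$, and these fiberwise isomorphisms should glue into the desired global isomorphism. For the case of general constructible $\L$ without properness, Nagata compactification over the Noetherian ring $R$ provides an open immersion $j\colon X_R \hookrightarrow \bar X_R$ with $\bar X_R$ proper over $R$; replacing $\L$ by $j_!\L$ (which remains constructible, with both \'etale and singular cohomology computing those of $\L$ on $X_R$) reduces to the proper case already treated.

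The main obstacle will be the analytic-geometric setup, rather than the algebraic side. One needs to carefully define the analytification of an $R$-scheme of finite type as a complex-analytic space over (shrinking neighborhoods of $K$ in) $U$, verify that its fibers agree with the classical analytifications $(X_{\kappa(s)})^{\an}$, and establish a version of proper base change and constructibility of higher direct images $R^kf^{\an}_*\L^{\an}$ in this relative analytic setting. One must also ensure the compatibility of algebraic and analytic specialization maps at each $s\in K$, so that M.~Artin's comparison theorem can be invoked functorially and uniformly in $s$, and that the resulting isomorphisms assemble into a morphism of colimits. These analytic preliminaries---essentially a relative version of GAGA over a Stein base, together with constructibility results for analytic direct images---form the technical heart of the proof, whereas the algebraic inputs (proper base change, Nagata, M.~Artin) are by now classical.
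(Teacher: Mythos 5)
There is a genuine and fundamental gap. Your fiberwise strategy computes, via proper base change on both sides and M.~Artin's theorem, an isomorphism of the \emph{direct image} sheaves $\RR^qf_*\L$ and $\RR^qf^{\an}_*\L^{\an}$ over the base --- this is essentially the paper's \emph{relative} comparison theorem (Theorem \ref{relArtin}), which the paper proves by exactly the d\'evissage you sketch. But it does not prove the stated theorem, because the base is not a point: both sides are computed by Leray spectral sequences whose $E_2^{p,q}$ terms are $H^p_{\et}(\Spec(\cO(U)),\RR^qf_*\L)$ and $H^p(U,\RR^qf^{\an}_*\L^{\an})$, and the terms with $p>0$ do not vanish. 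Indeed $K$ can have the homotopy type of any finite CW complex of dimension $\leq\dim S$, so already for $X=\Spec(\cO(S))$ and $\L=\Z/m$ the right-hand side is $H^k(K,\Z/m)\neq 0$, while your argument only sees the (trivial) fiberwise cohomology. Matching these $E_2$ pages is precisely the \emph{absolute} comparison $H^p_{\et}(\Spec(\cO(U)),\Z/m)\cong H^p(U,\Z/m)$, which is the actual content of the theorem and the hard part of the paper: it is proved not by any fibration or base-change argument (the introduction notes these fail in Stein geometry) but by passing to the qfh topology, killing ramification of singular classes on finite coverings via Bhatt's vanishing theorem and Theorem \ref{relArtin} (Proposition \ref{thkillram}), and then killing unramified classes by Grauert's bump method and Oka--Weil approximation (Proposition \ref{killunram}), together with Bingener's relative GAGA to control $\zeta_*\Z/m$. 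None of this machinery is replaceable by proper base change.

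Two secondary problems. First, your reduction of the non-proper constructible case via Nagata and $j_!\L$ is incorrect: $H^k_{\et}(\oX_R,j_!\L)$ is compactly supported cohomology of $X_R$, not $H^k_{\et}(X_R,\L)$ (e.g.\ $H^2(\P^1_\C,j_!\Z/m)=\Z/m$ while $H^2(\A^1_\C,\Z/m)=0$); the correct d\'evissage uses $Rj_*$ and requires controlling $\RR^qj_*\L$ via purity and resolution of singularities, as in Steps 3--5 of the proof of Theorem \ref{relArtin}. Second, $\cO(K)$ is Noetherian only for \emph{excellent} Stein compacta (Siu's criterion); for general $K$ one must work with a cofinal system of semianalytic compact neighborhoods as in Lemma \ref{semianal}, a fixable but necessary adjustment.
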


Theorem \ref{th4} is our main result. The bulk of this article is dedicated to its proof.
For applications to sums of squares, we really need a $G$-equivariant extension of Theorem \ref{th4} (see Theorem \ref{Artincompreal}), which we deduce from Theorem \ref{th4} by means of the Hochschild--Serre spectral sequence.

\subsection{Proof of the comparison theorem}

Two proofs of Artin's comparison theorem appear in \cite{SGA43}. The first \cite[XI, \S 4]{SGA43} (which only works for smooth varieties and locally constant coefficients) compares the \'etale and the classical topology by means of a Leray spectral sequence. It exploits the fact that points in smooth complex varieties admit good neighborhoods, which are iterated fibrations in smooth affine curves. 
The second \cite[XVI, \S 4]{SGA43} uses extensive d\'evissage and fibration arguments to reduce to the case of smooth projective curves. 

None of these proofs adapt to the setting of Theorem \ref{th4} as such fibration arguments cannot be successfully implemented in Stein geometry.
However, the d\'evissage argument of the second proof can indeed be used in the complex-analytic setting to yield the following easier \textit{relative} comparison theorem over Stein spaces.

\begin{thm}[Theorem \ref{relArtin}]
\label{th5}
Let $S$ be a Stein space.
Let $f:X\to Y$ be a morphism of $\cO(S)$-schemes of finite type and let $\L$ be a torsion \'etale abelian sheaf on $X$. Assume that $f$ is proper or that $\L$ is constructible.  Then the base change morphisms $\RR^k f_*(\L)^{\an}\to\RR^k f^{\an}_*(\L^{\an})$ are isomorphisms for $k\geq 0$.
\end{thm}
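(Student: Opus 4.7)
The plan is to verify the base change morphism stalkwise on $Y^{\an}$, reducing via proper base change and d\'evissage to Artin's classical comparison theorem for complex algebraic varieties. I would first handle the proper case: if $f$ is proper, then so is its analytification $f^{\an}$. A point $y \in Y^{\an}$ determines a geometric point $\bar y \colon \Spec(\C) \to Y$ of the $\cO(S)$-scheme $Y$, and proper base change on both sides gives
\begin{equation*}
(\RR^k f_* \L)_{\bar y} \simeq H^k_{\et}(X_{\bar y}, \L|_{X_{\bar y}}), \qquad (\RR^k f^{\an}_* \L^{\an})_y \simeq H^k(X^{\an}_y, \L^{\an}|_{X^{\an}_y}).
\end{equation*}
Since analytification commutes with fiber products, $X^{\an}_y$ is canonically identified with the analytification of the proper complex scheme $X_{\bar y}$, and the coincidence of the two cohomology groups is then M.~Artin's classical comparison theorem.

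For the constructible case, I would apply Nagata compactification (in its qcqs form) to factor $f$ as $X \xrightarrow{j} \bar X \xrightarrow{\bar f} Y$ with $j$ an open immersion and $\bar f$ proper, so that $\RR f_* \L \simeq \RR \bar f_* \RR j_* \L$ and the analogous relation holds analytically. Applying the proper case to $\bar f$, the task reduces to establishing the comparison $(\RR^k j_* \L)^{\an} \simeq \RR^k j^{\an}_* \L^{\an}$ for the open immersion $j$. This I would handle by d\'evissage in the spirit of the second proof of \cite[XVI]{SGA43}: stratify $\bar X$ so that $\L$ is locally constant on each stratum, build long exact sequences from the open-closed decompositions induced by the strata, and induct on $\dim X$ until the problem reduces to a relative curve situation, where the comparison can be verified directly (for instance from Artin's local computation for the cohomology of a punctured disc with locally constant coefficients).

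The chief obstacle will be the careful implementation of this d\'evissage over the possibly non-noetherian ring $\cO(S)$: the qcqs version of Nagata compactification for morphisms of finite type, preservation of constructibility under the relevant pushforward and restriction operations, and the base case of the induction must all be verified in this setting. By contrast, the fibration-based first proof of Artin's theorem in \cite[XI]{SGA43} does not transfer to Stein geometry, as the author emphasizes; only the more flexible d\'evissage strategy does, which is presumably why the relative statement can be obtained comparatively easily, well before the global Theorem~\ref{th4}.
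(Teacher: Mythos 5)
Your treatment of the proper case and of the Nagata reduction matches the paper's Steps \ref{step1}--\ref{step3}: the stalkwise identification via proper base change on both sides together with Artin's absolute comparison theorem applied to the fibers is exactly the paper's argument (the paper merely inserts a base change to the excellent local ring $\cO_{S,s}$ via Lemma \ref{cdbK} for bookkeeping), and the factorization $f=\bar f\circ j$ -- performed first over $\cO_{S,s}$ and then spread out to a Stein neighborhood by limit arguments -- is likewise how the paper reduces to open immersions. The genuine gap is in your endgame for the open immersion $j$. ``Reduce to a relative curve situation'' is precisely the move that is unavailable here: the elementary fibrations used in \cite[XVI, \S 4]{SGA43} to get down to curves do not exist over a Stein base, which is the author's stated reason for rejecting that route, and the open-immersion comparison is not a fiberwise statement that could be checked on the algebraic fibers over points of $Y^{\an}$.

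What the paper actually does for $j$ is: (a) first reduce the coefficients to the constant sheaf $\Z/m$ by resolving $\L$ by sheaves of the form $\pi_*\M$ with $\pi$ finite and $\M$ constant, so no stratification making $\L$ locally constant is ever needed; (b) invoke resolution of singularities for quasi-excellent schemes \cite[Theorem 1.1]{Temkin} over $\cO_{S,s}$, spread the resolution out over a Stein neighborhood, and use the induction on $\dim(X^{\an})$ to dispose of the locus where it is not an isomorphism; (c) stratify the closed complement $Y\setminus f(X)$ so that each closed stratum is regular of pure codimension, both over $\cO_{S,s}$ and, after shrinking $S$ (Proposition \ref{extan}), analytically; and (d) conclude by matching the \'etale purity isomorphism of \cite[XIX, Th\'eor\`emes 3.2 et 3.4]{SGA43} against the topological Thom isomorphism. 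Your ``punctured disc'' computation is the codimension-one shadow of (d), but without (b) and (c) there is no regular pair to which purity applies, and producing one is exactly where the excellence of $\cO_{S,s}$ and $\cO(K)$ (Lemma \ref{excellent}) and the spreading-out lemmas do irreplaceable work. Finally, running the d\'evissage requires applying the induction hypothesis to the sheaves $\RR^q(f_i)_*\Z/m$ supported on the lower-dimensional strata, which in turn requires knowing that these are (interiorly) constructible -- Lemma \ref{constr} in the paper -- a point your outline leaves implicit.
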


In contrast, our proof of Theorem \ref{th4} initially takes the point of view of the first proof of Artin's theorem. Reductions based on Theorem \ref{th5} allow us to assume that $X=\Spec(\cO(S))$ and $\L=\Z/m$. We then consider the Leray spectral sequence comparing the \'etale and the classical topology. In order to show that this spectral sequence degenerates, one has to prove that singular cohomology classes with $\Z/m$ coefficients on analytifications of \'etale $\cO(S)$-schemes are \'etale-locally trivial.

We proceed in two steps. First, we show that such cohomology classes become unramified after pull-back to a finite ramified covering (Proposition~\ref{thkillram}). This uses Theorem \ref{th5} in an essential way, as well as a vanishing theorem of Bhatt \cite[Theorem 1.1]{Bhatt}. Second, we show that such unramified cohomology classes may be killed by a further finite ramified covering (Proposition~\ref{killunram}). This second step is analytic in nature. It relies on the Oka--Weil approximation theorem and on Grauert's bump method as developed by Henkin and Leiterer~\cite{HL} and Forstneri\v{c}~\cite{Forstneric}.

This procedure unfortunately only shows that the relevant singular cohomology classes are killed on possibly ramified coverings, not on \'etale coverings. To overcome this difficulty, our proof of Theorem \ref{th4} makes use of Voevodsky's qfh topology \cite[\S 3.1]{Voehomo}, which is finer than the \'etale topology and allows for such coverings.

\vspace{.5em}

Our relative comparison theorem (Theorem \ref{th5}) is an analogue for \'etale sheaves of the relative GAGA theorem of Hakim \cite[VIII, Th\'eor\`eme~3.2]{Hakim} (see also \cite[Theorem 4.2]{Bingener} or \cite[Appendix C]{AT}) for coherent sheaves. 
In the coherent context, there is no need for an \textit{absolute} comparison theorem such as Theorem~\ref{th4}, as higher cohomology groups of algebraic coherent sheaves on affine schemes, or of analytic coherent sheaves on Stein spaces, vanish. This is a marked difference with the \'etale setting, in which the absolute comparison theorem lies deeper.

In this article, we make extensive use of relative algebraic geometry over complex spaces as initiated by Hakim \cite{Hakim},
and developed by Bingener \cite{Bingener} when the base is Stein. We refer to \S\ref{parBingener} for our conventions which follow \cite{Bingener}.
In particular, we use the above-mentioned relative GAGA theorem, through an application of \cite[(7.2)]{Bingener}, in Step \ref{step4bis} of the proof of Theorem~\ref{th4}.

\subsection{Structure of the article}

 Section \ref{parStein} gathers general results concerning Stein spaces. Most of this material is included to fix our conventions, for the convenience of the reader, or for lack of appropriate references. Specialists of Stein geometry might want to skip it. Section \ref{parunr} contains the main analytic input of our work: a procedure to kill a singular cohomology class with torsion coefficients on a Stein compactum, after pull-back to a finite ramified covering. 
We deduce Theorem~\ref{th5} from Artin's comparison theorem and its proof in Section \ref{parcomprel},  and we use it to kill the ramification of singular cohomology classes with torsion coefficients on finite ramified coverings in Section \ref{parram}. The above results are combined in Section~\ref{parcomp} to prove Theorem \ref{th4}, and to derive cohomological dimension bounds including Theorem \ref{th3}. Finally, applications to sums of squares problems are given in Section~\ref{parsos}.

\subsection{Acknowledgements}

I thank Jean-Beno\^it Bost for a helpful conversation and for having made \cite{Bingener} available to me, and Jos\'e F. Fernando and Marco Maculan for useful comments. I am also grateful to an anonymous referee for suggestions which significantly improved the exposition of the paper.

\section{Geometry of Stein spaces}
\label{parStein}

Among the many general facts on Stein spaces and their compact subsets that are collected in this section, let us put forward a descent result for $\cO(S)$-convexity along finite surjective holomorphic maps (Proposition \ref{propfinitehull}), and the correspondence between analytic coverings of connected normal Stein spaces and finite extensions of their meromorphic function fields (Proposition \ref{eqcat}).

\subsection{Complex spaces and coherent sheaves}

  A \textit{complex space} is a $\C$-ringed space that is locally isomorphic to a model complex space defined by a finitely generated ideal sheaf in a domain of $\C^N$ (see \cite[1, \S 1.5]{GRCoherent}).
We assume that they are second-countable, but not necessarily Hausdorff,  finite-dimensional or reduced.

  As in \cite{GRStein}, a complex space $S$ is said to be \textit{Stein} if $H^k(S,\cF)=0$ for all $k\geq 1$ and all coherent sheaves $\cF$ on $S$.  
We refer to this property as Cartan's Theorem B and to its consequence that coherent sheaves on Stein spaces are globally generated as Cartan's Theorem A.  Stein spaces are Hausdorff (holomorphic functions separate distinct $s, s'\in S$ as $H^1(S,\cI_{\{s,s'\}})=0$).
A complex space $S$ is Stein if and only if its reduction $S^{\red}$ is \cite[V, \S 4.3, Theorem 5]{GRStein}. The following lemma sometimes allows us to reduce problem about Stein spaces to the case of Stein manifolds.

\begin{lem}
\label{lemretract}
Let $S$ be a Stein space of dimension $n$. Then there exist a Stein manifold $S'$ of dimension $2n+1$ and a proper injective holomorphic map $i:S\to S'$ such that $i(S)$ is a strong deformation retract of $S'$.
\end{lem}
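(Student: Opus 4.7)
The plan is to embed $S$ properly as a closed complex subspace of the Stein manifold $\C^{2n+1}$ and then take as $S'$ a suitable open neighborhood of the image which is simultaneously Stein and admits $\iota(S)$ as a strong deformation retract; such an $S'$ is automatically a Stein manifold of complex dimension $2n+1$. The first step is Narasimhan's embedding theorem for Stein spaces, which supplies a proper holomorphic embedding $\iota\colon S \hookrightarrow \C^{2n+1}$ as a closed complex subspace. When $S$ is non-reduced, one first embeds $S^{\red}$ and then lifts the embedding to $S$ using Cartan's Theorem~B, recalling that $S$ is Stein if and only if $S^{\red}$ is.

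To construct the neighborhood, I would produce a smooth nonnegative plurisubharmonic function $\varphi \colon \C^{2n+1} \to \R$ whose zero locus equals $\iota(S)$ and which is strictly plurisubharmonic on the complement. By Cartan's Theorem~A, the ideal sheaf of $\iota(S)$ in $\C^{2n+1}$ is globally generated by holomorphic functions $f_1,\dots,f_m$, so that $\psi := \sum_i |f_i|^2$ vanishes exactly on $\iota(S)$ and is plurisubharmonic. One then produces $\varphi$ from $\psi$ by adding a correction of the form $\chi(\psi)\rho$, where $\rho$ is a strictly plurisubharmonic exhaustion of $\C^{2n+1}$ and $\chi \geq 0$ is smooth and vanishes to sufficient order at the origin, so that the vanishing locus is preserved while strict plurisubharmonicity is gained off $\iota(S)$. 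Sufficiently small sublevel sets $S' := \{\varphi < \epsilon\}$ are then open Stein neighborhoods of $\iota(S)$, and the negative gradient flow of $\varphi$, whose convergence is controlled by Lojasiewicz's gradient inequality applied to the real-analytic function $\psi$, yields a strong deformation retraction of $S'$ onto $\iota(S)$.

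The main obstacle lies in executing this plurisubharmonic construction so that the precise vanishing locus and the strict plurisubharmonicity off it are simultaneously achieved, and in analyzing the induced gradient flow carefully enough to extract a genuine continuous strong deformation retract (as opposed to a mere homotopy equivalence). An alternative route, which I would pursue in parallel, is to invoke known results on Stein tubular neighborhoods of closed analytic subsets of Stein manifolds---combining Siu's theorem on fundamental systems of open Stein neighborhoods with Lojasiewicz's existence of real-analytic regular neighborhoods of semianalytic sets---and then refine these two neighborhood bases to a single open neighborhood enjoying both properties at once.
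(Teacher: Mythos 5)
Your first step (Narasimhan's embedding of $S^{\red}$ into $\C^{2n+1}$, lifted to $S$ via the surjectivity of $\cO(S)\to\cO(S^{\red})$ from Cartan's Theorem B) is exactly what the paper does. The difference is in the second step: the paper disposes of it with a single citation to Mihalache's theorem, which states precisely that a closed analytic subset of a Stein space admits (a fundamental system of) open Stein neighborhoods of which it is a strong deformation retract. What you are sketching is essentially a proof of that theorem, and you have correctly identified its standard strategy (a nonnegative real-analytic plurisubharmonic function cutting out $i(S)$, pseudoconvex sublevel sets, and a {\L}ojasiewicz-controlled retraction). But the crux is left unresolved, and you say so yourself; as written, this is a genuine gap rather than a proof.

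Concretely: (a) strict plurisubharmonicity of $\varphi$ off $i(S)$ does not prevent $\varphi$ from having critical points off $i(S)$ (compare $|z|^2$, which is strictly psh with a critical point at the origin), so it does not by itself guarantee that the negative gradient flow terminates on $i(S)$; the nonvanishing of the gradient near the zero set is exactly what the {\L}ojasiewicz gradient inequality provides, and it must be applied to the real-analytic function $\psi=\sum\epsilon_i|f_i|^2$ (note also that you need weights for convergence, since Cartan's Theorem A may only give countably many generators). (b) Your correction term $\chi(\psi)\rho$ with a smooth cutoff destroys real-analyticity, so {\L}ojasiewicz applies to $\psi$ but the flow you propose is that of $\varphi$; these have to be reconciled, and in any case one still has to check that the flow preserves the sublevel set, that the time-to-arrival is finite or can be reparametrized continuously, and that the resulting retraction is continuous up to and including $i(S)$ and fixes it pointwise — this is the actual content of the cited theorem. (c) Your fallback of ``refining'' Siu's basis of Stein neighborhoods against a basis of {\L}ojasiewicz regular neighborhoods does not work as stated: if $V\subset U$ with $U$ retracting onto $i(S)$ and $V$ Stein, the deformation retraction of $U$ need not preserve $V$, so $V$ inherits neither property from $U$. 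The clean fix is to cite the neighborhood theorem (Mihalache, or an equivalent statement) rather than reconstruct it.
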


\begin{proof}
Let $i:S^{\red}\to \C^{2n+1}$ be  a proper injective holomorphic map (see \cite[Theorem 5]{Narasimhan}). As $\cO(S)\to\cO(S^{\red})$ is onto (by Cartan's Theorem B), one may extend it to a proper injective holomorphic map $i:S\to \C^{2n+1}$.
Its image ${i(S)\subset \C^{2n+1}}$ is a Stein analytic subset (by \cite[3, \S 1.3, Proposition]{GRCoherent} and \cite[V, \S 1, Theorem~1~b)]{GRStein}). The existence of $S'$ now follows from \cite[Theorem~3.1]{Mihalache}.
\end{proof}

If $S$ is a complex space and $\cF$ is a coherent sheaf on $S$, we endow $H^0(S,\cF)$ with the canonical Fr\'echet topology defined in \cite[V, \S 6]{GRStein} and use freely its properties listed in \cite[V, \S 6.4]{GRStein}. 
When $S$ is reduced and $\cF=\cO_S$, it coincides with the topology of uniform convergence on compact subsets \cite[V, \S 6.6, Theorem 8]{GRStein}. 

\subsection{Runge domains}

If $K\subset S$ is a compact subset of a complex space, its $\cO(S)$\nobreakdash-\textit{convex hull} is
$\whK_{\cO(S)}:=\{s\in S\mid |f(s)|\leq\sup_{t\in K}|f(t)|\textrm{ for all }f\in\cO(S)\}.$
By \cite[V, \S 4.2, Theorem 3]{GRStein}, a complex space $S$ is Stein if and only if
\begin{enumerate}[(i)]
\item the global holomorphic functions $\cO(S)$ separates the points of $S$, and
\item the $\cO(S)$-convex hull $\whK_{\cO(S)}\subset S$ of any compact subset $K\subset S$ is compact.
\end{enumerate}

An open subset $\Omega$ in a Stein space $S$ is said to be \textit{Runge} if it satisfies any of the equivalent properties of the following proposition. As we could not find a precise discussion in the literature when $S$ is possibly nonreduced, we include a brief proof.
Such a generality will be useful in the proof of Proposition \ref{propfinitehull}.

\begin{prop}
\label{OkaWeilnonreduit}
Let $\Omega\subset S$ be an open subset in a Stein space. The following assertions are equivalent, and hold for $(S,\Omega)$ if and only if they hold for $(S^{\red},\Omega^{\red})$.
\begin{enumerate}[(i)]
\item For all compact subsets $K\subset \Omega$, one has $\whK_{\cO(\Omega)}=\whK_{\cO(S)}$.
\item The image of the restriction map $\cO(S)\to\cO(\Omega)$ is dense, and $\Omega$ is Stein.
\item For any coherent sheaf $\cF$ on $S$, the restriction map $H^0(S,\cF)\to H^0(\Omega,\cF)$ has dense image, and $\Omega$ is Stein.
\end{enumerate}
\end{prop}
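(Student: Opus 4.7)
The plan decomposes the proposition into the reduced case (classical) and the reduction-invariance of each condition (the new substance). The equivalence (ii) $\Leftrightarrow$ (iii) goes through for any Stein space, reduced or not, by a standard argument: (iii) $\Rightarrow$ (ii) is trivial (take $\cF = \cO_S$), and for (ii) $\Rightarrow$ (iii) one presents a coherent sheaf $\cF$ near a given compact $K' \subset \Omega$ as a quotient of some $\cO^N$ by finitely many global sections (Cartan's Theorem A), uses Cartan's Theorem B to ensure the induced maps $\cO(V)^N \twoheadrightarrow \cF(V)$ are continuous surjections of Fr\'echet spaces on Stein opens $V$, and transfers density for $\cO$ to density for $\cF$ via the open mapping theorem. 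The equivalence (i) $\Leftrightarrow$ (ii) in the reduced case is the classical Runge theorem for Stein spaces \cite[IV, \S 1]{GRStein}.

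To cover the non-reduced case, I would prove that each of (i), (ii), (iii) is invariant under $(S, \Omega) \rightsquigarrow (S^{\red}, \Omega^{\red})$. For (i), invariance is immediate: $|S|$ and $|S^{\red}|$ coincide as topological spaces, the map $\cO(S) \twoheadrightarrow \cO(S^{\red})$ is surjective by Cartan B applied to the coherent nilradical $\cN_S$, and the complex value $f(s)$ depends only on the image of $f$ in $\cO(S^{\red})$. For (ii), the Stein property is reduction-invariant by \cite[V, \S 4.3, Theorem 5]{GRStein}, and the ``downward'' direction of the density condition is immediate from continuity and surjectivity of $\cO(\Omega) \twoheadrightarrow \cO(\Omega^{\red})$. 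The ``upward'' direction is the delicate step: using the short exact sequence on $S$,
\[ 0 \to \cN_S \to \cO_S \to i_* \cO_{S^{\red}} \to 0, \]
whose global sections over $S$ and $\Omega$ (both Stein) give a morphism of topologically exact sequences of Fr\'echet spaces by Cartan B, a diagram chase reduces density of $\cO(S) \to \cO(\Omega)$ to density of the quotient $\cO(S^{\red}) \to \cO(\Omega^{\red})$ (the hypothesis) together with density of the subspace map $H^0(S, \cN_S) \to H^0(\Omega, \cN_S)$. I would prove the latter by inducting on the nilpotent filtration $\cN_S \supset \cN_S^2 \supset \cdots$, each successive quotient being a coherent sheaf on $S^{\red}$ for which the reduced case of (iii) supplies density. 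Invariance of (iii) then follows from invariance of (ii) and the equivalence (ii) $\Leftrightarrow$ (iii) proved in the first paragraph.

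The main obstacle I anticipate is the inductive step for the nilradical filtration: the powers $\cN_S^k$ need not stabilize globally on $S$, only locally (by coherence on relatively compact opens). The argument must therefore exploit that the Fr\'echet topology on $\cO(\Omega)$ is generated by compact-open seminorms, and verify density one compact subset of $\Omega$ at a time, where only finitely many stages of the induction enter into any prescribed level of precision. This bookkeeping is routine but demands care to ensure that the approximations assembled from the successive nilradical layers converge in the Fr\'echet topology of $\cO(\Omega)$.
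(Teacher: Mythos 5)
Your overall route is essentially the paper's: settle the classical equivalence for the reduced pair, transfer statements between $(S,\Omega)$ and $(S^{\red},\Omega^{\red})$ via the surjectivity of $\cO\to\cO^{\red}$, and handle the delicate density direction by d\'evissage along the nilradical filtration after restricting to relatively compact opens. (The paper runs the d\'evissage once, directly for (iii)$^{\red}\Rightarrow$(iii) for arbitrary $\cF$, using stability of the density property under extensions; you run it for $\cO_S$ itself and recover (iii) from (ii) by the Cartan A/B presentation argument — the two organizations are interchangeable.) Your flagged bookkeeping issue is resolved exactly as you suggest, with one point to make explicit: the relatively compact opens $\Omega'\subset\Omega$ must be chosen with $(\Omega')^{\red}$ Runge in $\Omega^{\red}$ (possible by \cite[(1.1)]{NarasimhanLeviII}), so that the reduced-case density into $H^0(\Omega^{\red},\cdot)$ actually propagates to $H^0((\Omega')^{\red},\cdot)$.

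The one genuine gap is the claim that reduction-invariance of (i) is ``immediate''. Since $|f(s)|$ depends only on the image of $f$ in the reduced structure sheaf, you get $\whK_{\cO(\Omega^{\red})}\subset\whK_{\cO(\Omega)}$ for free, but the reverse inclusion requires every $f\in\cO(\Omega^{\red})$ to lift to $\cO(\Omega)$, i.e.\ the surjectivity of $\cO(\Omega)\to\cO(\Omega^{\red})$ — and this is \emph{not} automatic for an arbitrary open $\Omega$ (it needs $H^1(\Omega,\cN)=0$). So (i)$^{\red}\Rightarrow$(i) goes through as you say, using only surjectivity on $S$ together with the trivial inclusion $\whK_{\cO(\Omega)}\subset\whK_{\cO(S)}$; but (i)$\Rightarrow$(i)$^{\red}$ — which your logical scheme needs in order to pass from (i) to (ii) — requires first observing that (i) forces $\Omega$ to be Stein: $\cO(\Omega)$ separates points because $\cO(S)$ does, and (i) makes $\whK_{\cO(\Omega)}=\whK_{\cO(S)}$ compact (and contained in $\Omega$). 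Only then is $\cO(\Omega)\to\cO(\Omega^{\red})$ onto by Cartan's Theorem B, completing the invariance of (i). This is a short fix, but as written the step would fail.
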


\begin{proof}
Write $(*)^{\red}$ for assertion $(*)$ for the pair $(S^{\red},\Omega^{\red})$. Then (i)$^{\red}$, (ii)$^{\red}$ and (iii)$^{\red}$ are equivalent by \cite[VII, A, Corollary 9 and VIII, A, Theorem~11]{GunningRossi}. The implication (iii)$\Rightarrow$(ii) is obvious. If (ii) holds, the continuity and surjectivity of the restriction map $\cO(\Omega)\to\cO(\Omega^{\red})$ show that (ii)$^{\red}$ also holds. That (i)$^{\red}$ implies (i) follows from the surjectivity of $\cO(S)\to\cO(S^{\red})$. Conversely, if (i) holds, then~$\Omega$ is Stein ($\cO(\Omega)$ separates the points of $\Omega$ because $\cO(S)$ separates the points of $S$) and (i)$^{\red}$ follows from the surjectivity of $\cO(\Omega)\to\cO(\Omega^{\red})$. 

To prove (iii)$^{\red}\Rightarrow$(iii), we argue as in \cite[Lemma 1.10]{NorguetSiu}.
The characterization \cite[V, \S 6.2, Theorem 4]{GRStein} of the topology on $H^0(\Omega,\cF)$ shows that it is the initial topology with respect to the restriction maps $H^0(\Omega,\cF)\to H^0(\Omega',\cF)$ with $\Omega'\subset \Omega$ relatively compact. By \cite[(1.1)]{NarasimhanLeviII}, we may restrict to those $\Omega'$ such that~$(\Omega')^{\red}$ is Runge in $\Omega^{\red}$. We may thus replace $\Omega$ with such an $\Omega'$ and assume that~$\Omega$ is relatively compact in $S$. Let $\cN\subset \cO_S$ be the nilradical of $\cO_S$. As $\Omega$ is relatively compact in $S$, there exists $k\geq 0$ such that $\cN^k=0$ on $\Omega$.
Then $\cF$ is a successive extension of $\cF/\cN\cF,\dots, \cN^{k-1}\cF/\cN^k\cF$ and $\cN^k\cF$. As the set of coherent sheaves satisfying (iii) is stable by extensions (use \cite[Lemma~1.9]{NorguetSiu}),  we may suppose that $\cN\cF=0$ on $\Omega$. As $H^0(S,\cF)\to H^0(S,\cF/\cN\cF)$ is onto by Cartan's Theorem~B, we may replace $\cF$ with $\cF/\cN\cF$. Now $\cF$ may be identified with a coherent sheaf on~$S^{\red}$ and the required density statement follows from~(iii)$^{\red}$.
\end{proof}

\subsection{\texorpdfstring{$\cO(S)$}{O(S)}-convex compact subsets}
\label{Oconvex}

A compact subset $K$ of a complex space~$S$ is said to be $\cO(S)$\nobreakdash-\textit{convex} if $\whK_{\cO(S)}=K$. 
In the following lemma, $\ci$ strongly plurisubharmonic (psh) functions are meant in the sense of \cite[\S 2]{NarasimhanLeviI}.

\begin{lem}
\label{critpsh}
Let $\rho:S\to \R$ be a $\ci$
strongly psh function on a reduced
Stein~space.
\begin{enumerate}[(i)]
\item For $c\in\R$, the open subset $\{s\in S\mid\rho(s)< c\}$ of $S$ is Runge.
\item For $c\in\R$, if $S_{\leq c}:=\{s\in S\mid\rho(s)\leq c\}$ is compact, then it is $\cO(S)$-convex.
\end{enumerate}
\end{lem}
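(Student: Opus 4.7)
My plan is to reduce both parts to the following plurisubharmonic maximum principle for $\cO(S)$-convex hulls in a Stein space: for any continuous plurisubharmonic function $u$ on a reduced Stein space $S$ and any compact subset $L\subset S$, one has $\sup_{\widehat{L}_{\cO(S)}}u\le\sup_L u$. I would take this classical fact as the main input; one standard proof proceeds by approximating continuous plurisubharmonic functions on a Stein space by expressions of the form $\tfrac{1}{N}\log\|F_N\|$ with $F_N\colon S\to\C^{k_N}$ holomorphic, for which the inequality is immediate from the definition of the $\cO(S)$-convex hull.

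Given this principle, part~(ii) is immediate: setting $K:=S_{\le c}$, which is compact by hypothesis, we have $\sup_K\rho\le c$, so the principle applied to $u=\rho$ yields $\whK_{\cO(S)}\subset\{\rho\le c\}=K$, i.e., $K$ is $\cO(S)$-convex.

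For part~(i), set $\Omega:=\{\rho<c\}$. I would verify the two conditions of Proposition~\ref{OkaWeilnonreduit}(ii), namely that $\Omega$ is Stein and that the restriction map $\cO(S)\to\cO(\Omega)$ has dense image. To see that $\Omega$ is Stein, I start from a $\ci$ strongly plurisubharmonic exhaustion $\phi$ of $S$ (supplied by Narasimhan's characterization of Stein spaces; see \cite{NarasimhanLeviI}) and observe that $e^\phi+(c-\rho)^{-1}$ is a $\ci$ strongly plurisubharmonic exhaustion of $\Omega$: composing the strongly plurisubharmonic functions $\phi$ and $\rho$ with the smooth convex increasing functions $x\mapsto e^x$ and $x\mapsto(c-x)^{-1}$ (on $(-\infty,c)$) preserves strong plurisubharmonicity, and the two summands blow up at infinity in $S$ and along $\partial\Omega$ respectively. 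Hence $\Omega$ is Stein by another application of Narasimhan's theorem. For the density, given a compact $K\subset\Omega$, one has $\sup_K\rho<c$, so by the maximum principle $\whK_{\cO(S)}\subset\Omega$. Thus $\whK_{\cO(S)}$ is a compact $\cO(S)$-convex subset of $\Omega$ containing $K$, and the Oka--Weil approximation theorem on the Stein space $S$ then lets one uniformly approximate on $\whK_{\cO(S)}$, hence on $K$, any function holomorphic in a neighborhood of $\whK_{\cO(S)}$---in particular any element of $\cO(\Omega)$---by restrictions of elements of $\cO(S)$; the required density follows.

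The main substantive obstacle is the maximum principle stated at the outset: it is the only step that genuinely couples the strong plurisubharmonicity of $\rho$ with the Stein structure of $S$. Once it is in hand, the remainder reduces to manipulations with the foundational Stein-theoretic results cited elsewhere in the paper.
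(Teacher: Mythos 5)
Your argument is correct, but it is organized differently from the paper's, which disposes of the lemma in two lines: part (i) is quoted directly as \cite[Theorem 3]{NarasimhanLeviI}, and part (ii) is deduced from (i) by observing that the compact set $S_{\le c}=\bigcap_{c'>c}\{\rho<c'\}$ is an intersection of Runge (hence Stein) open sets, each of which must contain $\widehat{(S_{\le c})}_{\cO(S)}$. You instead prove (i) by hand --- Steinness of $\{\rho<c\}$ via the explicit strongly psh exhaustion $e^\phi+(c-\rho)^{-1}$ together with Narasimhan's solution of the Levi problem, and density of $\cO(S)$ in $\cO(\Omega)$ via Oka--Weil once the hull of a compact of $\Omega$ is known to stay in $\Omega$ --- and you get (ii) directly from the maximum principle $\sup_{\whK_{\cO(S)}}u\le\sup_K u$. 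Both routes rest on the same classical circle of ideas, and your reductions (the convexity/positivity check for $x\mapsto(c-x)^{-1}$, the exhaustion property, the use of Proposition~\ref{OkaWeilnonreduit}(ii)) are all sound; what you buy is a self-contained argument at the cost of importing a black box (the coincidence of holomorphic and plurisubharmonic hulls on reduced Stein spaces) that is of essentially the same depth as the Narasimhan theorem the paper cites. One caveat on your parenthetical justification of that black box: it is not true that every continuous psh function is a uniform limit of functions $\tfrac1N\log\|F_N\|$ with $F_N$ holomorphic; the classical proofs (e.g.\ H\"ormander's Theorem 4.3.4 and its extension to Stein manifolds/spaces) instead produce, for each point outside the psh hull, a holomorphic function separating it from $K$. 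Since you explicitly take the maximum principle as a cited input rather than proving it, this does not create a gap, but the sketched ``standard proof'' should be replaced by a proper reference.
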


\begin{proof}
\label{lemRunge}
Assertion (i) is \cite[Theorem 3]{NarasimhanLeviI}. 
The compactness of $S_{\leq c}$ and (i) together imply that $S_{\leq c}$ has a basis of Runge neighborhoods in $S$, hence is $\cO(S)$\nobreakdash-convex.
\end{proof}

We also include the next lemma, proven in \cite[(1.1)]{NarasimhanLeviII} for later reference.

\begin{lem}
Let $S$ be a Stein space and let $K\subset S$ be an $\cO(S)$-convex compact subset. Then $K$ admits a basis of Runge open neighborhoods in $S$. 
\end{lem}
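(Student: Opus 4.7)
The plan is, given any open neighborhood $U$ of $K$ in $S$, to produce a $\ci$ strongly plurisubharmonic exhaustion function $\rho\colon S\to\R$ such that $\rho<0$ on $K$ and $\rho>0$ on $S\setminus U$. The set $V:=\{s\in S\mid\rho(s)<0\}$ then satisfies $K\subset V\subset U$, and it is Runge in $S$ by Lemma~\ref{critpsh}~(i). Letting $U$ shrink over a basis of neighborhoods of $K$ produces the required basis of Runge neighborhoods of $K$.

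First I would reduce to the case where $S$ is reduced. By Proposition~\ref{OkaWeilnonreduit}, an open subset of $S$ is Runge if and only if its reduction is Runge in $S^{\red}$. Moreover, $K$ is $\cO(S^{\red})$-convex since $\cO(S)\to\cO(S^{\red})$ is surjective by Cartan's Theorem~B, so we may replace $S$ by $S^{\red}$. A theorem of Narasimhan (the characterization of Stein spaces by existence of strongly psh exhaustions) then supplies a $\ci$ strongly psh exhaustion $\rho_0\colon S\to\R$. Subtracting a constant, we may assume $\rho_0<-1$ on $K$.

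The compact set $L:=(S\setminus U)\cap\{\rho_0\leq 1\}$ is disjoint from $K$. For each $s\in L$, the $\cO(S)$-convexity of $K$ yields $f_s\in\cO(S)$ with $\sup_K|f_s|^2<1<|f_s(s)|^2$, and the open sets $\{|f_s|^2>1\}$ therefore cover $L$. Extracting a finite subcover, one obtains $f_1,\dots,f_N\in\cO(S)$ together with compact sets $L_1,\dots,L_N$ covering $L$ and constants $\varepsilon_i>0$ with $|f_i|^2\geq 1+\varepsilon_i$ on $L_i$. Pick a $\ci$ convex increasing function $\chi\colon\R\to\R_{\geq 0}$ vanishing on $(-\infty,1]$ and satisfying $\chi(1+\varepsilon_i)>\sup_L|\rho_0|$ for every $i$, and set
\[
\rho \;:=\; \rho_0+\sum_{i=1}^N\chi(|f_i|^2).
\]
Since $|f_i|^2$ is psh and $\chi$ is convex increasing and vanishes at $0$, each $\chi(|f_i|^2)$ is psh, so $\rho$ is $\ci$ strongly psh; it is an exhaustion because the added terms are nonnegative; it satisfies $\rho<-1$ on $K$ since $\chi(|f_i|^2)=0$ there; and it satisfies $\rho>0$ on $S\setminus U$, because on $L$ some $\chi(|f_i(s)|^2)$ exceeds $|\rho_0(s)|$, while on $(S\setminus U)\setminus L\subset\{\rho_0>1\}$ one already has $\rho_0>1$.

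The only delicate point is to ensure that the modification of $\rho_0$ preserves strong plurisubharmonicity while producing the desired sign on the whole complement of $U$. This rests on the standard fact that the pre-composition of a psh function by a smooth convex increasing function is psh, together with the compactness of $L$, which reduces matters to choosing $\chi$ to grow fast enough to dominate $|\rho_0|$ on the finitely many pieces $L_i$.
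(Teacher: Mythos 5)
Your proof is correct. Note that the paper does not actually prove this lemma: it simply cites \cite[(1.1)]{NarasimhanLeviII} "for later reference." Your argument is the standard one behind that reference, and it is fully compatible with the toolkit the paper itself sets up: the reduction to $S^{\red}$ via Proposition~\ref{OkaWeilnonreduit} and the surjectivity of $\cO(S)\to\cO(S^{\red})$ is sound, the existence of a strongly psh exhaustion on a reduced Stein space is exactly what the paper invokes from \cite{NarasimhanLeviI}, and the final appeal to Lemma~\ref{critpsh}~(i) applies since your $\rho$ is a $\ci$ strongly psh function (indeed an exhaustion). The bump construction itself is watertight: the scaling $\sup_K|f_s|<1<|f_s(s)|$ is available precisely because $K$ is $\cO(S)$-convex, the compactness of $L=(S\setminus U)\cap\{\rho_0\le 1\}$ justifies the finite subcover and the uniform margins $\varepsilon_i$, and on a singular reduced space the local extension of $\rho_0$ and of the $f_i$ to an ambient domain shows that $\rho_0+\sum_i\chi(|f_i|^2)$ is still strongly psh in Narasimhan's sense. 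The only cosmetic remark is that the hypothesis "$\chi$ vanishes at $0$" is irrelevant for plurisubharmonicity of $\chi(|f_i|^2)$ (convex increasing suffices); what you actually use is that $\chi$ vanishes on $(-\infty,1]$, so that the correction terms do not disturb $\rho_0$ on $K$.
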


We now analyze how $\cO(S)$-convexity behaves with respect to finite morphisms. Our goal is Proposition \ref{propfinitehull} (ii) which will be used in the proof of Proposition \ref{killunram}.

\begin{lem}
\label{lemnormal}
Let $p:T\to S$ be a finite surjective holomorphic map between connected normal Stein spaces. Let $K\subset S$ be a compact subset. Then 
$$\widehat{p^{-1}(K)}_{\cO(T)}=p^{-1}(\whK_{\cO(S)}).$$
\end{lem}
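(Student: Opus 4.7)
The inclusion $\widehat{p^{-1}(K)}_{\cO(T)}\subset p^{-1}(\whK_{\cO(S)})$ is essentially formal: for any $g\in\cO(S)$ the function $f:=p^*g\in\cO(T)$ satisfies $\sup_{p^{-1}(K)}|f|=\sup_K|g|$ because $p$ is surjective and hence $p(p^{-1}(K))=K$. Applying the defining inequality of $\widehat{p^{-1}(K)}_{\cO(T)}$ with this $f$ shows that any $t$ in the hull satisfies $p(t)\in\whK_{\cO(S)}$.

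The substantive inclusion is $p^{-1}(\whK_{\cO(S)})\subset \widehat{p^{-1}(K)}_{\cO(T)}$. The plan is to associate to every $f\in\cO(T)$ a monic polynomial relation over $\cO(S)$ and exploit it. Let $d$ be the generic degree of $p$; since $T$ and $S$ are connected, normal and Stein (hence irreducible and equidimensional), $p$ is an analytic branched covering of degree~$d$, and its branch locus $B\subset S$ is a nowhere dense analytic subset. On $S\setminus B$, for each $s$ the fiber $p^{-1}(s)$ consists of exactly $d$ points, and we set
\[
a_i(s)\;:=\;e_i\bigl(f(t_1),\dots,f(t_d)\bigr),\qquad p^{-1}(s)=\{t_1,\dots,t_d\},
\]
where $e_i$ is the $i$-th elementary symmetric polynomial. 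These functions are holomorphic on $S\setminus B$, and locally bounded on $S$ because $p$ is finite. By the Riemann extension theorem on the normal space $S$, they extend to $a_i\in\cO(S)$, and by continuity the identity $f^d+a_1\, f^{d-1}+\dots+a_d=0$ holds on all of $T$.

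I would then apply the same construction to $f^n$ for each $n\geq 1$, producing $a_i^{(n)}\in\cO(S)$ with $f^{nd}+a_1^{(n)}f^{n(d-1)}+\dots+a_d^{(n)}=0$. Setting $M:=\sup_{p^{-1}(K)}|f|$, the crude bound on elementary symmetric functions gives $|a_i^{(n)}|\leq\binom{d}{i}M^{ni}$ on $K$, hence on $\whK_{\cO(S)}$ by definition of the hull. For $t\in p^{-1}(\whK_{\cO(S)})$ the polynomial identity yields
\[
|f(t)|^{nd}\;\leq\;\sum_{i=1}^d\binom{d}{i}M^{ni}|f(t)|^{n(d-i)}\;=\;\bigl(M^n+|f(t)|^n\bigr)^d-|f(t)|^{nd},
\]
so $2^{1/d}|f(t)|^n\leq M^n+|f(t)|^n$, that is, $|f(t)|\leq M(2^{1/d}-1)^{-1/n}$. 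Letting $n\to\infty$ gives $|f(t)|\leq M$, which is exactly what is needed.

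The main obstacle is the construction of the polynomial relation with coefficients in $\cO(S)$ in the presence of a possibly nonempty branch locus and of nonreduced structure issues along it. This is resolved by combining the finiteness of $p$ (which gives local boundedness of the would-be coefficients), the fact that $B$ has codimension $\geq 1$, and the Riemann extension theorem for normal complex spaces; the remaining ingredient, the elementary inequality $2x^n\leq (M^n+x^n)^d$ together with the $n\to\infty$ limit, is an entirely standard trick to upgrade polynomial bounds to a sharp sup-norm estimate.
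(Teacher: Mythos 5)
Your argument is correct, but it reaches the nontrivial inclusion $p^{-1}(\whK_{\cO(S)})\subset\widehat{p^{-1}(K)}_{\cO(T)}$ by a genuinely different route than the paper. The paper argues by contraposition: given $t\notin\widehat{p^{-1}(K)}_{\cO(T)}$, it invokes the Oka--Weil approximation theorem on the Stein space $T$ to produce $g\in\cO(T)$ that is small on the hull, vanishes on $p^{-1}(p(t))\setminus\{t\}$, and is large at $t$, and then pushes $g$ down to $S$ by the norm morphism $\mathrm{N}_p$ (defined, exactly like your $a_i$, by symmetrizing over fibers off the branch locus and extending across it by Riemann's theorem on the normal space $S$), obtaining a single element of $\cO(S)$ that certifies $p(t)\notin\whK_{\cO(S)}$. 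You instead prove, for \emph{every} $f\in\cO(T)$, the inequality $\sup_{p^{-1}(\whK_{\cO(S)})}|f|\leq\sup_{p^{-1}(K)}|f|$ directly, via the monic integral dependence relation of $f$ over $\cO(S)$ applied to all powers $f^n$ followed by the limit $n\to\infty$; this is the classical argument that holomorphic convexity ascends along finite branched coverings, and your estimates are correct. What each approach buys: yours needs no approximation theory at all (in particular it never uses that $T$ is Stein, only that $S$, $T$ are connected and normal and $p$ is finite surjective, so it proves a slightly more general statement), at the cost of the power-and-limit trick; the paper's is a one-shot construction of a separating function but leans on Oka--Weil on $T$. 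Two small points to make explicit when writing yours up: first, the bound $|a_i^{(n)}|\leq\binom{d}{i}M^{ni}$ at points of $K$ lying \emph{inside} the branch locus $B$ is not literally part of the definition of $a_i^{(n)}$ on $S\setminus B$; it follows because, by properness of $p$, the fibers over $s'\to s\in K\cap B$ accumulate only on $p^{-1}(s)\subset p^{-1}(K)$, where $|f|\leq M$, so the bound passes to the limit. Second, for the identity $f^d+a_1f^{d-1}+\dots+a_d=0$ to hold as written you should take $a_i=(-1)^ie_i$ rather than $e_i$; this sign has no effect on any of the absolute-value estimates.
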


\begin{proof}
The inclusion $\widehat{p^{-1}(K)}_{\cO(T)}\subset p^{-1}(\whK_{\cO(S)})$ is obvious. To prove the converse inclusion, 
we use the \textit{norm morphism} $\textrm{N}_p:p_*\cO_{T}\to\cO_S$ of $p$ (it is defined on the locus over which $p$ is an unramified finite covering of manifolds by summing 
over the fibers and it extends to all of $S$ by Riemann's extension theorem \cite[7,~\S~4.2]{GRCoherent} because $S$ is normal).

 Choose $t\in T\setminus \widehat{p^{-1}(K)}_{\cO(T)}$ and set $s:=p(t)$.  Let $\delta$ be the maximal cardinality of the fibers of $p$ over some compact neighborhood of $K$. By the version \cite[Theorem~2.8.4]{Forstneric} of the Oka--Weil approximation theorem, there exists $g\in \cO(T)$ with $|g|\leq 1$ on $\widehat{p^{-1}(K)}_{\cO(T)}$, with $g=0$ on $p^{-1}(\{s\})\setminus\{t\}$ and with $g(t)=\delta+1$. Then $f:=\textrm{N}_p(g)\in \cO(S)$ satisfies $|f|\leq \delta$ on~$K$, and $|f(s)|> \delta$ (to see it, use that~$p$ is open by \cite[3, \S 3.2, Criterion of Openness]{GRCoherent}). 
Hence $s\notin\whK_{\cO(S)}$.
\end{proof}

\begin{prop}
\label{propfinitehull}
Let $p:T\to S$ be a finite holomorphic map between Stein spaces, and let $K\subset S$ be a compact subset.
\begin{enumerate}[(i)]
\item If $K$ is $\cO(S)$-convex, then $p^{-1}(K)$ is $\cO(T)$\nobreakdash-convex.
\item If $p^{-1}(K)$ is $\cO(T)$\nobreakdash-convex and $p$ is surjective, then $K$ is $\cO(S)$-convex.
\end{enumerate}
\end{prop}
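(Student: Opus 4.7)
The plan is to prove (i) by a direct pullback argument and (ii) by a plurisubharmonicity argument, combined with Bremermann's identification of $\cO(S)$-convex hulls with plurisubharmonically-convex hulls on Stein spaces.

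For (i), given $t \in T \setminus p^{-1}(K)$, one has $p(t) \notin K = \widehat{K}_{\cO(S)}$, so the $\cO(S)$-convexity of $K$ provides $f \in \cO(S)$ with $|f(p(t))| > \sup_K |f|$. The pullback $p^*f \in \cO(T)$ then satisfies
\begin{equation*}
|(p^*f)(t)| = |f(p(t))| > \sup_K |f| \geq \sup_{p^{-1}(K)} |p^*f|,
\end{equation*}
showing $t \notin \widehat{p^{-1}(K)}_{\cO(T)}$; the reverse containment is tautological.

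For (ii), I would first reduce to the case where $S$ and $T$ are reduced, which is harmless since passage to reduction changes neither the underlying topological spaces nor the absolute values of holomorphic functions on them. Then I would argue by contradiction: suppose some $s \in \widehat{K}_{\cO(S)}$ lies outside $K$. By surjectivity of $p$, pick $t \in p^{-1}(s)$; since $s \notin K$, one has $t \notin p^{-1}(K) = \widehat{p^{-1}(K)}_{\cO(T)}$, and the $\cO(T)$-convexity of $p^{-1}(K)$ yields $g \in \cO(T)$ with $|g(t)| > \sup_{p^{-1}(K)} |g|$. I would then introduce the upper semicontinuous function $\phi \colon S \to [0, \infty)$ defined by $\phi(s') := \sup_{t' \in p^{-1}(s')} |g(t')|$; surjectivity of $p$ gives $\sup_K \phi = \sup_{p^{-1}(K)} |g|$, and $|g(t)| \leq \phi(s)$ holds by definition.

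The crucial observation is that $\log \phi$ is plurisubharmonic on $S$: locally near any point of $S$, $\log \phi$ is the maximum of finitely many pushforwards of $\log|g|$ along the branches of the finite holomorphic map $p$, and the maximum of finitely many plurisubharmonic functions is plurisubharmonic. Invoking Bremermann's theorem, the hypothesis $s \in \widehat{K}_{\cO(S)}$ yields $\log \phi(s) \leq \sup_K \log \phi$, hence $\phi(s) \leq \sup_{p^{-1}(K)} |g|$; combined with $|g(t)| \leq \phi(s)$, this contradicts $|g(t)| > \sup_{p^{-1}(K)} |g|$. The main obstacle will be to justify the plurisubharmonicity of $\log \phi$ rigorously near the ramification locus of $p$, which amounts to the general pluripotential-theoretic fact that the max-pushforward of a plurisubharmonic function along a proper, finite-fibered holomorphic map is plurisubharmonic.
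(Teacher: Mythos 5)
Part (i) is correct and is exactly the paper's (one-line) argument. The problem is in part (ii), where your key claim --- that $\log\phi$, with $\phi(s'):=\max_{t'\in p^{-1}(s')}|g(t')|$, is plurisubharmonic on $S$ --- is false in general, and it fails precisely in the case that carries all the difficulty. You locate the obstacle ``near the ramification locus of $p$'', but over a \emph{normal} base the psh-ness of the fiberwise max is indeed a standard fact (elementary symmetric functions of the values of $g$ on the fibers are holomorphic, and the max of the moduli of the roots of a monic polynomial with holomorphic coefficients is log-psh). The true obstacle is the \emph{non-normal locus of $S$}. Concretely, take $S=\{(x,y)\in\C^2\mid xy=0\}$, let $T$ be the disjoint union of the two coordinate axes and $p:T\to S$ the normalization (finite and surjective), and let $g\in\cO(T)$ be the function equal to $1$ on the first copy and to $2$ on the second. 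Then $\phi$ restricted to the $x$-axis equals $1$ away from the origin and $2$ at the origin, so $\log\phi$ pulled back along the holomorphic disc parametrizing the $x$-axis violates the sub-mean-value inequality at $0$: it is not psh in the Forn\ae ss--Narasimhan sense (nor in any sense for which a Bremermann-type identity $\whK_{\cO(S)}=\whK_{\mathrm{psh}(S)}$ could hold). Your reduction to the reduced case does not remove this problem, and reducing further to the normal case by factoring $p$ through the normalization $\wS\to S$ is circular: descending $\cO$-convexity along $\wS\to S$ is an instance of (ii) for a normalization morphism, which is exactly the situation above. (A secondary point: even where $\log\phi$ is psh, you would need the hard direction of Bremermann's theorem for merely upper semicontinuous psh functions on a possibly singular Stein space, which deserves a reference.)

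For comparison, the paper's proof of (ii) splits $p$ through the normalization of $S$. Over a connected normal base it uses the norm $\mathrm{N}_p(g)\in\cO(S)$ of a carefully chosen $g$ (vanishing on the other points of the fiber), i.e.\ the \emph{product} over the fibers rather than the max, which produces an actual holomorphic separating function and avoids pluripotential theory altogether; your max-function argument is a legitimate alternative there. The normalization case is then handled by induction on $\dim S$ using the conductor ideal, the open mapping theorem for Fr\'echet spaces, and the Oka--Weil theorem to lift a separating function from the non-normal locus; none of this is captured by your outline, so the argument as proposed does not close.
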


\begin{proof}
Assertion (i) is immediate from the definitions and we now prove (ii). Using~(i), one may replace $T$ with its normalization and hence assume that $T$ is normal. We may also replace $S$ with its reduction and assume that it is reduced. 

Choose $s\in S\setminus K$. To construct $f\in\cO(S)$ such that $|f|<1$ on~$K$ and $|f(s)|=1$, we may first construct it in restriction to the (finite) union of irreducible components of $S$ intersecting $K\cup\{s\}$ and then extend it to $S$ using Cartan's Theorem B. We may thus assume that $S$ has finite dimension $n$ and argue by induction on $n$.

Let~$\wS$ be the normalization of $S$. Using the natural factorization $T\to\wS\to S$ of~$p$, we may either assume that $S$ is normal or that $p$ is a normalization morphism. In the first case, we may suppose that $S$ is connected, and replace $T$ by any of its connected components dominating $S$. The result then follows from Lemma \ref{lemnormal}.

We now deal with the second case where $p$ is a normalization morphism. 
Let $\cI\subset \cO_S$ be the annihilitor of the cokernel of $\cO_S\hookrightarrow p_*\cO_T$ (the \textit{conductor} of $p$). It is a coherent sheaf of $\cO_S$-ideals by \cite[Annex, \S 4.4]{GRCoherent} which is also, in view of its definition, a sheaf of $p_*\cO_T$-ideals. We let $S'\subset S$ and $T'\subset T$ be the (possibly nonreduced) complex subspaces defined by $\cI$.  By construction, the subset $S'$ of~$S$ is the locus over which $p$ is not an isomorphism, i.e.\ the nonnormal locus of $S$. As a consequence, one has ${\dim(S')<\dim(S)}$. We get a commutative exact diagram of coherent sheaves on $S$:
\begin{equation}
\begin{aligned}
\label{diagconductor}
\xymatrix
@R=0.5cm 
{
0 \ar[r]&\cI\ar[r]\ar@{=}[d]&p_*\cO_T\ar[r]&  p_*\cO_{T'} \ar[r] &0\\
0\ar[r]&\cI\ar[r]& \cO_S\ar[r]\ar[u]&\cO_{S'}\ar[u]\ar[r] &0.}
\end{aligned}
\end{equation} 

By (i) applied to the inclusion $T'\hookrightarrow T$, the subset $p^{-1}(S'\cap K)= T'\cap p^{-1}(K)$ of~$T'$ is $\cO(T')$-convex. The induction hypothesis applied to $p|_{T'}:T'\to S'$ then shows that $S'\cap K$ is $\cO(S')$\nobreakdash-convex. Use Lemma \ref{lemRunge} to choose a Runge neighborhood~$\Omega$
of $S'\cap K$ in $S'$ such that $s\notin \Omega$. Apply Lemma \ref{lemRunge} again to 
construct a relatively compact Runge neighborhood $\Theta$ of $p^{-1}(K)$ in $T$ that is disjoint from~$p^{-1}(\{s\})$, and such that $T'\cap\Theta\subset p^{-1}(\Omega)$.

The restriction map $\cO(\Theta)\to\cO(T'\cap\Theta)$ is continuous, and surjective by Cartan's Theorem B. It thus follows from the open mapping theorem for Fr\'echet spaces that there exists a neighborhood $U\subset \cO(T'\cap\Theta)$ of the origin such that for all $a\in U$, there exists $b\in \cO(\Theta)$ such that $b|_{T'\cap\Theta}=a$ and $|b|<1$ on $p^{-1}(K)$. Let $V\subset \cO(\Omega)$ be the inverse image of $U$ by the (continuous) pull-back map $\cO(\Omega)\to \cO(T'\cap\Theta)$.

Set $\cF:=\cO_{S'}$ if $s\notin S'$ and $\cF:=\cI_{\{s\}}\subset\cO_{S'}$ if $s\in S'$. By Proposition~\ref{OkaWeilnonreduit}~(iii), there exists $c\in H^0(S',\cF)\subset\cO(S')$ with $(1+c)|_{\Omega}\in V$. Set ${d:=1+p^*c\in\cO(T')}$ so that $a:=d|_{T'\cap\Theta}\in U$. 
By our choice of $U$, there exists $b\in\cO(\Theta)$ with ${b|_{T'\cap\Theta}=d|_{T'\cap\Theta}}$ and $|b|<1$ on $p^{-1}(K)$. The Oka--Weil approximation theorem \cite[Theorem~2.8.4]{Forstneric}
 now shows the existence of $e\in\cO(T)$ with $e|_{T'}=d$, and such that $|e|<1$ on $p^{-1}(K)$ and $e=1$ on $p^{-1}(\{s\})$.

Diagram (\ref{diagconductor}) remains exact after taking global sections by Cartan's Theorem~B.  A diagram chase in the resulting diagram shows the existence of $f\in\cO(S)$ with $p^*f=e\in\cO(T)$. One has $f(s)=1$ and $|f|<1$ on $K$, so $K$ is $\cO(S)$\nobreakdash-convex.
\end{proof}

\begin{rems}
(i) In Proposition \ref{propfinitehull} (ii), one cannot remove the assumption that~$S$ and~$T$ are Stein, even if $f$ is a reduction morphism (see \cite[(8.5)]{Schuster}) or a normalization morphism (see \cite[Theorem 3]{Markoe}).

(ii) In the setting of Proposition \ref{propfinitehull}, the equality $\widehat{p^{-1}(K)}_{\cO(S)}=p^{-1}(\whK_{\cO(T)})$ does not hold in general, for instance if $T=\{(x,y)\in\C^2\mid xy=0\}$, if $p:S\to T$ if the normalization morphism, and if $K=\{(x,0)\in\C^2\mid |x|=1\}\subset T$.
\end{rems}

\subsection{Stein compact subsets}

If $\cF$ is a sheaf on a complex space $S$ and $K\subset S$ is closed,  we let $\cF(K)$ denote the set of germs of sections of~$\cF$ in a neighborhood~of~$K$. 

A compact subset of a complex space~$S$ is said to be \textit{Stein} if it admits a basis of Stein open neighborhoods.
A \textit{Stein compactum} is the germ of a complex space along a Stein compact subset.
As the intersection of two Stein open subsets is Stein (see \cite[p.~127]{GRStein}), an intersection of Stein compact subsets is again Stein. By Lemma \ref{lemRunge}, an $\cO(S)$-convex compact subset of a Stein space is Stein. 

\begin{lem}
\label{flat}
Let $S$ be a Stein space and let $K\subset L\subset S$ be Stein compact subsets. Then the ring morphisms $\cO(S)\to\cO(K)$ and $\cO(L)\to \cO(K)$ are flat. 
\end{lem}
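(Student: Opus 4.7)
My plan is to reduce both assertions to the following key fact, which I would establish separately: for any Stein space $T$ and any Stein open subset $\Omega\subset T$, the restriction morphism $\cO(T)\to\cO(\Omega)$ is flat.

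To prove this key fact I would use the ideal criterion for flatness. Starting from a finitely generated ideal $I=(f_1,\dots,f_n)\subset\cO(T)$ and the coherent ideal sheaf $\cI\subset\cO_T$ it generates, one has a short exact sequence $0\to\cK\to\cO_T^n\to\cI\to 0$ with $\cK$ coherent. Cartan's Theorem B, applied on the Stein spaces $T$ and $\Omega$, yields the exact sequences
\begin{equation*}
0\to\cK(T)\to\cO(T)^n\to I\to 0,\qquad 0\to\cK(\Omega)\to\cO(\Omega)^n\to\cI(\Omega)\to 0.
\end{equation*}
Tensoring the first with $\cO(\Omega)$ over $\cO(T)$ and comparing with the second, the desired identification $I\otimes_{\cO(T)}\cO(\Omega)\cong\cI(\Omega)\hookrightarrow\cO(\Omega)$ reduces to the surjectivity of the natural map $\cK(T)\otimes_{\cO(T)}\cO(\Omega)\to\cK(\Omega)$. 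I would deduce this surjectivity from Cartan's Theorem A combined with Forster's theorem that a coherent sheaf on a finite-dimensional Stein space is generated by finitely many global sections.

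With the key fact in hand, the first assertion is immediate: since $K$ is Stein compact, $\cO(K)=\colim_{K\subset U}\cO(U)$ as a filtered colimit over Stein open neighborhoods $U\subset S$ of $K$, each $\cO(S)\to\cO(U)$ is flat by the key fact, and filtered colimits of flat modules are flat. For the second assertion I would apply the ideal criterion once more: given a finitely generated ideal $I\subset\cO(L)$, I choose a Stein open neighborhood $V\subset S$ of $L$ on which generators of $I$ are defined, obtaining $I_V\subset\cO(V)$ with $I_V\cdot\cO(L)=I$. The first assertion applied to $(V,L)$ gives that $\cO(V)\to\cO(L)$ is flat, hence $I_V\otimes_{\cO(V)}\cO(L)\cong I$; combined with the flatness of $\cO(V)\to\cO(K)$ (the first assertion applied to $(V,K)$, valid because $K\subset V$ remains Stein compact), this yields
\begin{equation*}
I\otimes_{\cO(L)}\cO(K)\cong I_V\otimes_{\cO(V)}\cO(L)\otimes_{\cO(L)}\cO(K)\cong I_V\otimes_{\cO(V)}\cO(K)\hookrightarrow\cO(K).
\end{equation*}

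The main obstacle is the surjectivity of $\cK(T)\otimes_{\cO(T)}\cO(\Omega)\to\cK(\Omega)$ in the key fact: Cartan's Theorem A only yields stalkwise generation of $\cK$ by its global sections, and upgrading this to a global statement over $\Omega$ requires an honest finite-generation input such as Forster's theorem, or alternatively a Stein-exhaustion argument on $\Omega$. Everything else in the plan is formal manipulation of the ideal criterion, of tensor-product associativity, and of filtered colimits.
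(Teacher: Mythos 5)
Your plan is correct and is in substance the same argument the paper invokes from Kucharz's Lemma 2.2 for the first assertion (Cartan's Theorems A and B plus the ideal criterion of flatness), followed by a passage to filtered colimits for the second (where the paper uses the equational criterion on $\cO(L)=\colim_i\,\cO(U_i)$ and you equivalently descend a finitely generated ideal to a neighborhood $V$ of $L$). The one step to secure is the surjectivity of $\cK(T)\otimes_{\cO(T)}\cO(\Omega)\to\cK(\Omega)$: Forster's theorem requires $T$ to be finite-dimensional, but since relatively compact Stein open neighborhoods of $K$ are cofinal you may assume $\Omega$ relatively compact, in which case Cartan's Theorem A together with the compactness of $\overline{\Omega}$ already yields finitely many sections in $\cK(T)$ generating $\cK$ on $\Omega$ (exactly the mechanism of Lemma \ref{bcStein}), and Theorem B on $\Omega$ then gives the surjectivity.
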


\begin{proof}
That $\cO(S)\to\cO(K)$ is flat is proven in \cite[Proof of Lemma~2.2]{Kucharz}. There, the compact subset $K$ is assumed to be $\cO(S)$-convex, but only the fact that it is Stein is used. In addition, the Stein space $S$ is assumed to be a connected manifold. The proof extends to our more general setting, replacing references to H\"ormander's book with applications of Cartan's Theorems A and B (in the generality of \cite{GRStein}).

Let $(U_i)_{i\in\N}$ be a decreasing basis of Stein open neighborhoods of $L$ in $S$.  By the above, the ring $\cO(K)$ is flat over $\cO(U_i)$ for all $i\in\N$. By the equational criterion of flatness \cite[Lemma \href{https://stacks.math.columbia.edu/tag/00HK}{00HK}]{SP}, the ring $\cO(K)$ is flat over $\colim_{i\in\N}(\cO(U_i))=\cO(L)$.
\end{proof}

We will say that a Stein compact subset $K$ of a Stein space $S$ is \textit{excellent} if the ring $\cO(K)$ is noetherian. By a theorem of Siu \cite[Theorem 1]{SiuNoeth}, this is the case if and only if for every germ $Z$ of closed analytic subset along~$K$, the subset $Z\cap K$ of $K$ has finitely many connected components. This holds in particular if~$K$~is semianalytic (see \cite[Th\'eor\`eme I.9]{Frisch}).
Excellent Stein compact subsets are plentiful as the next lemma shows.

\begin{lem}
\label{semianal}
Let $S$ be a Stein space and let $K\subset S$ be a compact subset.
\begin{enumerate}[(i)]
\item The subset $K$ has a semianalytic $\cO(S)$-convex compact neighborhood in $S$.
\item If $K$ is Stein, it has a basis of semianalytic Stein compact neighborhoods in~$S$.
\end{enumerate}
\end{lem}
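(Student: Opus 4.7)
For assertion~(i), the plan is to realize an $\cO(S)$-convex semianalytic compact neighborhood of $K$ as a sublevel set of a real-analytic strongly plurisubharmonic exhaustion function built from a Narasimhan embedding. By Narasimhan's embedding theorem (as used in the proof of Lemma~\ref{lemretract}), fix a proper injective holomorphic map $i:S^{\red}\to\C^N$ realizing $S^{\red}$ as a closed analytic subset of~$\C^N$. Set $\rho:S\to\R$, $\rho(s):=\|i(s)\|^2$. Then $\rho$ is real-analytic, is an exhaustion because $i$ is proper, and is strongly plurisubharmonic on $S^{\red}$ in the sense of \cite[\S 2]{NarasimhanLeviI} (locally, extend $i$ to holomorphic functions on the ambient domain in some $\C^k$; the Levi form of the sum of squared moduli is positive definite at any point of $S^{\red}$ because $i$ is a closed embedding of complex spaces and therefore induces injections of Zariski tangent spaces). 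Choose $c\in\R$ with $c>\max_K\rho$, and set $L:=\{s\in S\mid\rho(s)\leq c\}$. Then $L$ is a compact neighborhood of $K$, semianalytic as the preimage under the real-analytic map $i$ of the semianalytic set $\{z\in\C^N\mid\|z\|^2\leq c\}$, and $\cO(S^{\red})$-convex by Lemma~\ref{critpsh}~(ii). Since $\cO(S)\to\cO(S^{\red})$ is surjective by Cartan's Theorem~B, and since the moduli $|f|$ for $f\in\cO(S)$ agree with those of their restrictions to $S^{\red}$, the $\cO(S)$- and $\cO(S^{\red})$-convex hulls of subsets of $S$ coincide; hence $L$ is $\cO(S)$-convex.

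For assertion~(ii), the plan is to bootstrap (i) on arbitrarily small Stein open neighborhoods of $K$. Given any open neighborhood $V$ of $K$ in $S$, Stein compactness of $K$ provides a Stein open subset $U\subset S$ with $K\subset U\subset V$. Applying (i) to the pair $(U,K)$ yields a semianalytic, $\cO(U)$-convex compact neighborhood $L$ of $K$ in $U$. By Lemma~\ref{lemRunge} combined with Proposition~\ref{OkaWeilnonreduit}, $L$ admits a basis of Runge open neighborhoods in $U$, and each such Runge open subset of the Stein space $U$ is itself Stein; consequently $L$ is Stein. Finally, semianalyticity is a local property of closed sets, and $L$ is compact and contained in $U$, so being semianalytic in $U$ forces being semianalytic in $S$. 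This delivers the required semianalytic Stein compact neighborhood of $K$ inside $V$.

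The only genuinely delicate step is the verification in (i) that $\rho=\|i\|^2$ is strongly plurisubharmonic on $S^{\red}$ in Narasimhan's sense, which comes down to the injectivity of the differential of Narasimhan's embedding on Zariski tangent spaces. The rest is routine manipulation: identifying $\cO$-convex hulls via Cartan's Theorem~B, exhaustion by a proper map, stability of the semianalytic class under real-analytic preimages, and the passage from $\cO$-convexity to the Stein property via Lemma~\ref{lemRunge}.
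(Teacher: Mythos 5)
Your proposal is correct and follows essentially the same route as the paper: part (i) is obtained as a sublevel set $\{\rho\le c\}$ of a real-analytic strongly plurisubharmonic exhaustion function (the paper simply cites \cite[Lemma p.~358]{NarasimhanLeviI} for the existence of $\rho$, whereas you reconstruct it as $\|i\|^2$ from a Narasimhan embedding, reducing to $S^{\red}$ exactly as the paper does), with $\cO(S)$-convexity from Lemma~\ref{critpsh}~(ii); part (ii) is the same bootstrap of (i) inside arbitrarily small Stein open neighborhoods of $K$, using Lemma~\ref{lemRunge} to pass from $\cO(U)$-convexity to the Stein property.
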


\begin{proof}
After replacing it with $S^{\red}$, we may assume that $S$ is reduced.
Let $\rho:S\to\R$ be a real-analytic strongly psh exhaustion function (see \cite[Lemma p.~358]{NarasimhanLeviI}). For $c\gg 0$, the semianalytic compact subset $\{s\in S\mid \rho(s)\leq c\}$ is a neighborhood of $K$ which is $\cO(S)$-convex by Lemma \ref{critpsh} (ii), proving (i).

If $K$ is Stein, it admits a basis of Stein open neighborhoods each of which is exhausted by semianalytic Stein compact subsets by (i). This proves (ii).
\end{proof}

\begin{lem}
\label{neighneigh}
Let $K$ be a Stein compact subset of a Stein space $S$.  There exists a basis of Stein compact neighborhoods $(L_j)_{j\in J}$ of $K$ in $S$ such that $L_j$ admits a basis $(U_i)_{i\in I_j}$ of Stein open neighborhoods and $L_j$ is $\cO(U_i)$-convex for $j\in J$ and $i\in I_j$.
\end{lem}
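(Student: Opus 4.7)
The plan is to realize each $L_j$ as a sublevel set $\{\rho\leq c\}$ of a real-analytic strongly plurisubharmonic exhaustion function $\rho$ defined on an ambient Stein open neighborhood $V$ of $K$; the required $\cO(U_i)$-convexity will then follow by applying Lemma \ref{critpsh}(ii) separately on each Stein open $U_i$ in a suitable basis of neighborhoods of $L_j$.

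First I would reduce to the case where $S$ is reduced. For any compact $M\subset S$ and any Stein open $U\subset S$ containing $M$, the surjection $\cO(U)\to\cO(U^{\red})$ (Cartan's Theorem B) together with the fact that the value $f(s)\in\C$ of a holomorphic function depends only on its class modulo the nilradical give $\widehat{M}_{\cO(U)}=\widehat{M}_{\cO(U^{\red})}$; moreover $U$ is Stein if and only if $U^{\red}$ is. So the Stein-ness of open neighborhoods and the $\cO$-convexity condition in the statement are insensitive to the passage from $S$ to $S^{\red}$, and I may assume $S$ is reduced.

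Now let $V$ range over a basis of Stein open neighborhoods of $K$ in $S$ (available by the Stein compact hypothesis). For each such $V$, apply \cite[Lemma p.~358]{NarasimhanLeviI} to obtain a real-analytic strongly psh exhaustion function $\rho:V\to\R$. For every $c\in\R$ with $K\subset\{\rho<c\}$ (such $c$ exist by compactness of $K$ and continuity of $\rho$), set $L:=L_{V,c}:=\{s\in V\mid \rho(s)\leq c\}$. This is a semianalytic compact neighborhood of $K$ contained in $V$, and as $V$ shrinks and $c$ varies the family $\{L_{V,c}\}$ is cofinal among compact neighborhoods of $K$ in $S$. By Lemma \ref{critpsh}(ii), $L$ is $\cO(V)$-convex, and hence by Lemma \ref{lemRunge} it admits a basis $(U_i)_{i\in I}$ of Runge---hence Stein---open neighborhoods in $V$; since $V$ is open in $S$, the same family is a basis of Stein open neighborhoods of $L$ in $S$.

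It remains to check the $\cO(U_i)$-convexity of $L$ for each $i$. The restriction $\rho|_{U_i}$ is strongly psh on the reduced Stein space $U_i$, and the inclusion $L\subset U_i$ yields $\{s\in U_i\mid \rho(s)\leq c\}=L\cap U_i=L$, which is compact. A second application of Lemma \ref{critpsh}(ii), now on $U_i$, gives that $L$ is $\cO(U_i)$-convex, completing the construction. The main technical point to get right is precisely this \emph{uniformity}: we need the $\cO$-convexity to hold for a whole basis of Stein open neighborhoods $U_i$ of $L$, not merely for the ambient $V$. Realizing $L$ as a sublevel set of a psh function defined on all of $V$ is what makes the hypothesis of Lemma \ref{critpsh}(ii) transfer automatically to every intermediate Stein open $U_i\subset V$, and this is the crucial feature that makes the argument work.
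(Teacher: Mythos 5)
Your proof is correct and takes essentially the same route as the paper's: the paper's Lemma \ref{semianal}~(i) produces its $\cO(U)$-convex compact neighborhood $L$ of $K$ precisely as a sublevel set of a real-analytic strongly psh exhaustion, and then takes the $U_i$ to be Runge open neighborhoods of $L$, exactly as you do. The only divergence is the justification of the final, crucial step: the paper deduces $\cO(U_i)$-convexity from the hull equality $\widehat{L}_{\cO(U_i)}=\widehat{L}_{\cO(U)}=L$ characterizing Runge domains (Proposition \ref{OkaWeilnonreduit}~(i)), whereas you reapply Lemma \ref{critpsh}~(ii) to $\rho|_{U_i}$ --- both are valid, and your variant has the minor advantage of not actually needing the $U_i$ to be Runge in $V$, only Stein.
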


\begin{proof}
Let $U$ be a Stein open neighborhood of $K$ in $S$.  By Lemma \ref{semianal} (i), there exists a compact neighborhood $L$ of $K$ in $U$ that is $\cO(U)$-convex.  By Lemma \ref{lemRunge}, the subset $L$ admits a basis of Runge open neighborhoods $(U_{i})_{i\in I}$ in $U$. It follows that $L$ is $\cO(U_i)$-convex for all $i\in I$. This concludes.
\end{proof}

If $K$ is a compact subset of Stein space $S$, we define $\cS_K\subset\cO(S)$ to be the subset of holomorphic functions that do not vanish on $K$, and $\cO(S)_K:=\cO(S)[\cS_K^{-1}]$. Such rings are studied in \cite{Kucharz} and \cite{ABF}, at least if $S$ is reduced.

\begin{lem}
\label{excellent}
Let $S$ be a Stein space and let $K\subset S$ be a compact subset. 
\begin{enumerate}[(i)]
\item If $K$ is $\cO(S)$-convex, the ring morphism $\cO(S)_K\to\cO(K)$ is faithfully flat.
\item The ring $\cO(S)_K$ is excellent.
\item If $K$ is Stein and excellent, the ring $\cO(K)$ is excellent.
\end{enumerate}
\end{lem}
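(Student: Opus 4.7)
The three parts will be proven in sequence, building on Lemma \ref{flat} and the analytic ingredients of \S\ref{parStein}. For part~(i), first observe that each $f\in\cS_K$, being nonvanishing on the compact set $K$, is nonvanishing in some neighborhood of $K$ and is hence invertible in $\cO(K)$. This yields a factorization $\cO(S)\to\cO(S)_K\to\cO(K)$. Flatness of the second arrow follows from flatness of the first (Lemma \ref{flat}) via the identification $\cO(S)_K\otimes_{\cO(S)}\cO(K)=\cO(K)$. For faithful flatness, I plan to show that every maximal ideal of $\cO(S)_K$ extends to a proper ideal of $\cO(K)$. Using the $\cO(S)$-convexity of $K$, I will establish that every maximal ideal of $\cO(S)_K$ is of the form $\km_s\cdot\cO(S)_K$ where $\km_s$ is the evaluation ideal of $\cO(S)$ at some $s\in K$: if a prime $\kq\subset\cO(S)$ were disjoint from $\cS_K$ yet not contained in any such $\km_s$, then by compactness and $\cO(S)$-convexity one can combine finitely many suitable elements of $\kq$ into a member of $\kq\cap\cS_K$. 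Each maximal ideal of that form manifestly extends to a proper (in fact evaluation) ideal in $\cO(K)$.

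For part (ii), by Lemma \ref{semianal}~(i), $K$ admits a semianalytic $\cO(S)$-convex compact neighborhood $L$ in $S$. Since $\cO(S)_K$ is a localization of $\cO(S)_L$ at $\cS_K$, and localizations of excellent rings are excellent, it suffices to prove the statement for $L$ in place of $K$, so I may assume $K$ is itself semianalytic and $\cO(S)$-convex. Then $\cO(K)$ is noetherian (by the Siu--Frisch theorem recalled before the lemma) and excellent (by part~(iii) applied to $K$; no circularity arises, since the proof of (iii) uses only (i) and classical analytic results). Faithful flatness from part~(i) gives noetherianity of $\cO(S)_K$ by standard descent. For excellence, I plan to verify that $\cO(S)_K\to\cO(K)$ is a regular morphism: its fibers arise as filtered colimits of localizations of Stein neighborhood rings $\cO(U)$ and will be shown geometrically regular by the local-at-a-point analysis of~(iii).

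For part (iii), I plan to verify the axioms of excellence directly for the noetherian ring $\cO(K)$. Universal catenarity and openness of the regular loci can be reduced to the corresponding properties for the analytic local rings $\cO_{S,s}$ with $s\in K$, which control $\cO(K)$ locally via Lemma \ref{flat}. The key and expected main obstacle is the geometric regularity of formal fibers. For a maximal ideal $\km_s$ of $\cO(K)$ corresponding to a point $s\in K$, the localization $\cO(K)_{\km_s}$ is the analytic local ring $\cO_{S,s}$, whose formal fibers are geometrically regular by the classical theorem that convergent power series rings are excellent. For prime ideals of $\cO(K)$ not situated at a point of $K$, the excellence hypothesis on $K$ --- that germs of closed analytic subsets along $K$ have finitely many connected components --- allows a reduction to the generic point of an irreducible germ, to which the same classical excellence result applies after a further localization. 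The main technical burden will be organizing this reduction coherently, for which I plan to appeal to the detailed study of such rings in~\cite{Kucharz} and~\cite{ABF}.
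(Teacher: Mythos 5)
Part (i) of your proposal is sound and follows the paper's route: flatness comes from Lemma \ref{flat}, and faithful flatness from the fact that every maximal ideal of $\cO(S)_K$ is of the form $\km_s\cdot\cO(S)_K$ with $s\in K$ (the paper simply cites \cite[Corollary~3.3~(v)]{ABF} for this; your compactness-plus-Runge-approximation sketch is the standard proof of that fact and is fine). Your reduction in (ii) to a semianalytic $\cO(S)$-convex $K$ and the descent of noetherianity along the faithfully flat map of (i) also match the paper.

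The genuine gaps are in your plans for the excellence assertions, which the paper does not reprove but delegates to \cite[Theorem~3.5]{ABF} for (ii) and to \cite[Bemerkung pp.152--153]{Bingenerexcellent} for (iii). First, your claim in (iii) that $\cO(K)_{\km_s}$ \emph{is} the analytic local ring $\cO_{S,s}$ is false: already for $K$ a closed disc in $\C$ and $s\in K$, the localization consists only of quotients of germs along all of $K$ and is a proper subring of $\C\{z\}$. What is true is that $\cO(K)_{\km_s}\to\cO_{S,s}$ is a flat local map inducing an isomorphism on completions; but then geometric regularity of the formal fibers of $\cO(K)_{\km_s}$ requires knowing that this map (and more generally $\cO(K)_{\kp}\to(\cO_{S,s})_{\kp'}$ for the non-closed primes described by Zame) is a \emph{regular} morphism. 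That regularity is precisely the content of the paper's Lemmas \ref{regularlemma} and \ref{regular}, whose proofs use the closedness of the singular locus of $\Spec(\cO(L))$, i.e.\ part (iii) itself --- so this is the actual crux, not a routine reduction. Second, in (ii) you propose to deduce excellence of $\cO(S)_K$ from excellence of $\cO(K)$ via regularity of $\cO(S)_K\to\cO(K)$; besides inheriting the previous problem (that regularity is Lemma \ref{regular}, established after and by means of this lemma), descent of full excellence along a faithfully flat regular morphism is itself delicate: geometric regularity of formal fibers and the J-2 condition do descend by a completion argument, but universal catenarity does not descend for free and needs a separate argument. As written, your (ii) and (iii) are programs whose hardest steps are asserted rather than proved; either carry out the Bingener/ABF arguments in detail (including the prime-ideal analysis via Zame's theorem) or cite them as the paper does.
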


\begin{proof}
By Lemma \ref{flat}, the ring~$\cO(K)$ is $\cO(S)$-flat, hence $\cO(S)_K$-flat. 
If $K$ is $\cO(S)$\nobreakdash-convex, the description given in \cite[Corollary~3.3~(v)]{ABF} of the maximal ideals of~$\cO(S)_K$ 
now implies assertion (i) (use \cite[Lemma \href{https://stacks.math.columbia.edu/tag/00HQ}{00HQ}]{SP}).

Assertion (ii) is proven in \cite[Theorem 3.5]{ABF}. 
As the noetherianity of~$\cO(S)_K$ is not explicitly proven there, we give an argument. If $K'$ is an excellent $\cO(S)$\nobreakdash-convex compact neighborhood of $K$ in $S$ (which exists by Lemma~\ref{semianal}~(i)), then $\cO(S)_K$ is a localization of $\cO(S)_{K'}$.  We may thus assume that $K$ is excellent and $\cO(S)$-convex. As $\cO(K)$ is noetherian, so is $\cO(S)_K$ by (i) and \cite[Lemma \href{https://stacks.math.columbia.edu/tag/033E}{033E}]{SP}.  To dispel any doubt as to whether the excellence of $\cO(S)_K$ holds if $S$ is possibly nonreduced, one can reduce to the reduced case using \cite[Main Theorem 1]{KS}.

Assertion (iii) is proven in \cite[Bemerkung pp.152-153]{Bingenerexcellent} (there, the Stein compact subset $K$ is assumed to be semianalytic but only its excellence is used).
\end{proof}

Let $K$ be a Stein compact subset in a Stein space $S$. By \cite[Theorem~4.7]{Zame}, for every prime ideal $\kp\subset\cO(K)$, there exist $s\in K$ and a germ~$Z$ of closed analytic subset along $K$ which is \textit{essentially irreducible at} $s$ in the sense of \cite[p.~118]{Zame}, and such that $\kp=\{f\in\cO(K)\mid f|_Z\textrm{ vanishes identically near }s\}$.

\begin{lem}
\label{regularlemma}
With the above notation, the following are equivalent.
\begin{enumerate}[(i)]
\item The ring $\cO(K)_{\kp}$ is regular.
\item The germ of $Z$ at $s$ is not included in the singular locus of $S$.
\end{enumerate}
\end{lem}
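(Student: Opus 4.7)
The strategy is to compare $\cO(K)_\kp$ with the localization $(\cO_{S,s})_\kq$, where $\kq \subset \cO_{S,s}$ denotes the prime ideal of germs vanishing on the germ of $Z$ at $s$ (a prime since $Z$ is essentially irreducible at $s$). By construction of $\kp$, one has $\kp = \kq \cap \cO(K)$ under the germ-at-$s$ map $\cO(K) \to \cO_{S,s}$, so there is an induced local morphism $\varphi \colon \cO(K)_\kp \to (\cO_{S,s})_\kq$. The plan is to show that $\varphi$ is faithfully flat with closed fiber equal to the residue field $\kappa(\kq)$; regularity then transfers between source and target by standard commutative algebra, and the regularity of $(\cO_{S,s})_\kq$ is a straightforward geometric condition.

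Flatness of $\cO(K) \to \cO_{S,s}$ can be extracted from Frisch's theorem (stating that $\cO(U) \to \cO_{S,s}$ is flat for any Stein open $U \ni s$) together with the presentation $\cO(K) = \colim_{U \supset K} \cO(U)$ and the equational criterion of flatness, in a manner paralleling the proof of Lemma~\ref{flat}. To compute the closed fiber, apply Cartan's Theorem~A to the coherent ideal sheaf $\cI_Z$ on a Stein open neighborhood $U$ of~$K$: its global sections lie in $\kp \subset \cO(K)$ and generate the stalk $\cI_{Z,s} = \kq$ as an $\cO_{S,s}$-module, whence $\kp \cdot (\cO_{S,s})_\kq = \kq (\cO_{S,s})_\kq$ and the closed fiber is $\kappa(\kq)$.

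Finally, a faithfully flat local morphism with regular closed fiber transfers regularity in both directions \cite[Lemma \href{https://stacks.math.columbia.edu/tag/07NT}{07NT}]{SP}, so (i) amounts to regularity of $(\cO_{S,s})_\kq$. This last ring is the local ring of $\Spec(\cO_{S,s})$ at the generic point of the closed subscheme defined by $\kq$, which corresponds to the germ of $Z$ at $s$; it is regular precisely when this generic point lies in the smooth locus of $\Spec(\cO_{S,s})$, equivalently when the germ of $Z$ at $s$ is not contained in the singular locus of $S$, yielding (ii). The main obstacle is making the flatness argument for $\cO(K) \to \cO_{S,s}$ rigorous (particularly when $s$ lies on the topological boundary of $K$, where Stein compact neighborhoods of $s$ need not sit inside $K$); the closed-fiber computation via Cartan's Theorem~A is essentially formal once that setup is fixed.
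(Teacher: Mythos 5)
Your faithful-flatness reduction is sound as far as it goes: flatness of $\cO(K)\to\cO_{S,s}$ does follow from the colimit/equational-criterion argument you sketch, the fiber computation via Cartan's Theorem A is correct, and in the direction (ii)$\Rightarrow$(i) your argument coincides with the paper's, which localizes $\cO_{S,s}$ at the prime of an irreducible component $Z'$ of the germ, invokes Houzel's theorem for regularity there, and descends regularity along the flat local map $\cO(K)_{\kp}\to(\cO_{S,s})_{\kp'}$. One repairable error first: the germ of $Z$ at the point $s$ need not be irreducible even though $Z$ is essentially irreducible at $s$ along $K$ (think of a connected unramified double cover of a neighborhood of a circle $K\subset\C$ in $S=\C^2$, whose germ at each $s\in K$ has two branches interchanged by the monodromy along $K$), so your $\kq$ is not prime in general. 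You must localize at a minimal prime over $\kp\,\cO_{S,s}=I(Z_s)=\cap_i I(Z_i)$, i.e.\ at one irreducible component, exactly as the paper does; the closed fiber is still a field after this correction, but the point cannot be skipped, and you also owe an argument that this prime contracts to $\kp$ (this is where essential irreducibility actually enters, via Zame's Proposition 4.11).

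The genuine gap is your final sentence. For (i)$\Rightarrow$(ii) you ascend regularity to $(\cO_{S,s})_{\kq'}$ and then assert that this ring is regular ``precisely when'' the corresponding germ is not contained in $S^{\sing}$, calling the equivalence a straightforward geometric condition. Only one direction is straightforward (it is Houzel's theorem, which the paper cites for (ii)$\Rightarrow$(i)). The direction you need --- algebraic regularity of the localization of $\cO_{S,s}$ at a non-maximal prime forces the corresponding analytic germ to meet the smooth locus of $S$ --- is the entire difficulty, and it does not follow formally: $\Spec(\cO_{S,s})$ has a single closed point, so openness of its regular locus produces no actual points $t\in Z$ near $s$ at which to test nonsingularity of $S$. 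This is precisely why the paper's proof of (i)$\Rightarrow$(ii) does not route through $\cO_{S,s}$ at all. It instead passes to an excellent Stein compact neighborhood $L$, where the singular locus of $\Spec(\cO(L))$ is closed and maximal ideals of $\cO(L)$ do correspond to points of $L$; choosing $f$ in the singular ideal with $f\notin\kp$ produces points $t\in Z\cap L$ arbitrarily close to $s$ at which $\cO(L)_{\km}$ is regular, and regularity is then transferred to $\cO_{S,t}$ via the cotangent-space isomorphism $\km/\km^2\cong\km_{S,t}/\km_{S,t}^2$ and a dimension count. To complete your proof you would either have to reproduce that argument or cite a genuine theorem identifying the non-regular locus of $\Spec(\cO_{S,s})$ with the germ of the analytic singular locus $S^{\sing}$; as written, the decisive step is missing.
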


\begin{proof}
Assume first that $\cO(K)_{\kp}$ is regular.  Use Lemma \ref{semianal} (ii) to find an excellent Stein compact neighborhood $L$ of $K$ such that $Z$ extends to a germ of closed analytic subset along $L$. We still denote by $\kp$ the corresponding ideal of~$\cO(L)$.  As ${\cO(L)_{\kp}\to\cO(K)_{\kp}}$ is flat by Lemma \ref{flat},  the ring $\cO(L)_{\kp}$ is regular (see \cite[Lemma~\href{https://stacks.math.columbia.edu/tag/00OF}{00OF}]{SP}). As~$\cO(L)$ is excellent by Lemma \ref{excellent} (iii), the singular locus of~$\Spec(\cO(L))$ is closed, defined by an ideal $I\subset\cO(L)$. One can thus choose~$f\in I\setminus \kp$. As $f\notin\kp$ and $s\in\mathring{L}$, any neighborhood of $s$ in $S$ contains a point~$t\in Z\cap L$ with $f(t)\neq 0$. If $\km\subset\cO(L)$ is the ideal of functions vanishing at $t$, the ring $\cO(L)_{\km}$ is regular because $f\in I$. It is moreover of dimension $d:=\dim_t(Z)$ (see \cite[Proof of Corollary 4.9]{Zame}). As $\km/\km^2\isoto\km_{S,t}/\km_{S,t}^2$ (see \cite[below Lemma 2]{Kiehl}),
the ring~$\cO_{S,t}$ is regular, so $S$ is nonsingular at $t$ (see \cite[6, \S~2.1]{GRCoherent}).

Assume now that (ii) holds. Let $Z'$ be an irreducible component of the germ of~$Z$ at $s$ which is not included in $S^{\sing}$. If $\kp'\subset \cO_{S,s}$ is the  ideal of functions vanishing on $Z'$, then $(\cO_{S,s})_{\kp'}$ is regular by \cite[Th\'eor\`eme 3]{HouzelIII}. 
Since $\cO(K)_{\kp}\to(\cO_{S,s})_{\kp'}$ is a flat local morphism of local noetherian rings (by Lemma \ref{flat} and \cite[Proposition 4.11]{Zame}), the ring $\cO(K)_{\kp}$ is regular by \cite[Lemma \href{https://stacks.math.columbia.edu/tag/00OF}{00OF}]{SP}.
\end{proof}

Recall that a ring morphism $A\to B$ is \textit{regular} if it is flat and the induced morphism $\Spec(B)\to\Spec(A)$ has locally noetherian and geometrically regular fibers (see \cite[Definition \href{https://stacks.math.columbia.edu/tag/07R7}{07R7}]{SP}).
Lemma \ref{regular} is used in the proof of Lemma~\ref{cdbK}.

\begin{lem}
\label{regular}
Let $S$ be a Stein space and let $K\subset L\subset S$ be excellent Stein compact subsets. 
Then the ring morphisms $\cO(L)\to\cO(K)$ and $\cO(S)_K\to\cO(K)$ are regular. 
\end{lem}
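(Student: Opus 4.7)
The flatness of both morphisms follows from Lemma~\ref{flat}, directly for $\cO(L)\to\cO(K)$ and after noting that localization preserves flatness for $\cO(S)_K\to\cO(K)$. The fibers are noetherian since $\cO(K)$ is. The substance of the proof is therefore the geometric regularity of the fibers, which, since we work in characteristic zero, reduces to the regularity of each fiber: for every prime $\kq\subset\cO(K)$ with contraction $\kp$ in $\cO(L)$ (or $\cO(S)_K$), one must show that $\cO(K)_\kq/\kp\cO(K)_\kq$ is regular.

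The plan is to establish the stronger statement that this fiber equals the residue field $\kappa(\kq)$, trivially regular. Fix $\kp\subset\kq$. By \cite[Theorem~4.7]{Zame}, $\kq$ corresponds to a pair $(s,Z)$ with $s\in K$ and $Z$ essentially irreducible at $s$; the same pair, with $Z$ viewed along $L$ (or along a suitable neighborhood of $K$ in $S$ in the case of $\cO(S)_K$), describes $\kp$. Invoking \cite[Proposition~4.11]{Zame} produces a commutative triangle of flat local morphisms of noetherian local rings
\[
\cO(L)_\kp \longrightarrow \cO(K)_\kq \longrightarrow (\cO_{S,s})_{\kp'},
\]
where $\kp'\subset\cO_{S,s}$ is the prime of germs vanishing on $Z$ at $s$. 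Using Lemma~\ref{semianal}, choose a representative of $Z$ as a closed analytic subset of a Stein open neighborhood $U$ of $L$, with coherent ideal sheaf $\cI_Z$. By Cartan's Theorem~A, the stalk $\cI_{Z,s}=\kp'$ is generated as an $\cO_{S,s}$-module by $H^0(U,\cI_Z)$, whose elements restrict into both $\kp\subset\cO(L)$ and $\kq\subset\cO(K)$. Hence $\kp\cdot\cO_{S,s}=\kq\cdot\cO_{S,s}=\kp'$, and consequently $\kp\cdot(\cO_{S,s})_{\kp'}$ and $\kq\cdot(\cO_{S,s})_{\kp'}$ both equal the maximal ideal of $(\cO_{S,s})_{\kp'}$.

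By faithful flatness of $\cO(K)_\kq\to(\cO_{S,s})_{\kp'}$ (a flat local morphism of noetherian local rings), this descends to the equality $\kp\cO(K)_\kq=\kq\cO(K)_\kq$, making the fiber of $\cO(L)_\kp\to\cO(K)_\kq$ equal to $\kappa(\kq)$. The argument for $\cO(S)_K\to\cO(K)$ is parallel, the description of the primes of $\cO(S)_K$ given in \cite{ABF} supplying the analogous commutative triangle from a Stein open neighborhood of $K$ in $S$. The main obstacle is the clean extraction of both the commutative triangle and the equalities $\kp\cdot\cO_{S,s}=\kq\cdot\cO_{S,s}=\kp'$ from Zame's structural theory---together with checking, in the $\cO(S)_K$-case, that this analysis still goes through when the ambient space $L$ is replaced by the semi-localization $\cO(S)_K$. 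Once these ingredients are in place, the regularity assertion is a short diagram-chase combining Cartan's Theorem~A with faithful flatness.
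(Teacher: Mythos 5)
There is a genuine gap, and it sits at the heart of your argument: the claim that the fiber of $\cO(L)\to\cO(K)$ over the contraction $\kp$ of a prime $\kq\subset\cO(K)$ is the residue field $\kappa(\kq)$, i.e.\ that $\kp\cdot\cO(K)_{\kq}=\kq\cdot\cO(K)_{\kq}$, is false in general. By Zame's theorem the prime $\kq$ corresponds to a pair $(s,Z)$ where $Z$ is only a \emph{germ of closed analytic subset along $K$}; such a germ need not extend to a closed analytic subset of any neighborhood of $L$ (let alone of $S$), so the step ``choose a representative of $Z$ as a closed analytic subset of a Stein open neighborhood $U$ of $L$'' is not available, and the phrase ``the same pair, with $Z$ viewed along $L$, describes $\kp$'' does not make sense. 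Concretely, take $S=\C^2$, $K\subset L$ closed polydisks, and $Z=\{g=0\}$ for $g$ holomorphic near $K$ but not continuable past $K$ as a closed analytic set near $L$: the associated prime $\kq\subset\cO(K)$ is nonzero, but its contraction to $\cO(L)$ can be $(0)$, in which case the fiber over $\kp=(0)$ localized at $\kq$ is a localization of $\cO(K)$ with a nonzero prime, hence not a field. The fibers of $\Spec(\cO(K))\to\Spec(\cO(L))$ genuinely have positive dimension, and their regularity must be proved, not trivialized. (Your reductions to flatness, noetherianity, and regular $=$ geometrically regular in characteristic zero are fine; it is only this central step that fails.)

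For comparison, the paper handles the first morphism by citing Bingener, and for $\cO(S)_K\to\cO(K)$ it identifies the fiber over the contracted prime $\kq\subset\cO(S)$ with $\cO(T\cap K)$, where $T\subset S$ is the common zero locus of $\kq$ (surjectivity by Cartan's Theorem~B, injectivity by coherence of $\cI_T$), thereby reducing to the case $\kq=0$ with $S$ reduced. Then, for the prime of $\cO(T\cap K)$ given by a pair $(s,Z)$, it observes that no nonzero global function vanishes on the germ of $Z$ at $s$, while $S^{\sing}$ has a nonzero global equation, so $Z$ is not contained in $S^{\sing}$; Lemma~\ref{regularlemma} then gives regularity of the local ring of the fiber. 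If you want to salvage your approach, you would need to replace ``the fiber is a field'' by an argument of this kind that proves regularity of a positive-dimensional fiber directly.
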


\begin{proof}
The first assertion is proven in \cite[(2.2)]{Bingener} (there, Lemmas \ref{flat} and \ref{regularlemma} are used as well-known facts, and~$K$ and $L$ are assumed to be semianalytic but only their excellence is used).  
We prove the second assertion following the arguments of \loccit As~$\cO(K)$ is $\cO(S)_K$-flat by Lemma \ref{flat}, and in view of \cite[Lemma \href{https://stacks.math.columbia.edu/tag/038V}{038V} (1)$\Leftrightarrow$(3)]{SP}, it suffices to show that the fibers of $\Spec(\cO(K))\to \Spec(\cO(S)_K)$ are regular. Fix $\kp\in\Spec(\cO(K))$ and let $\kq$ be its image in $\Spec(\cO(S))$. 
We will show the regularity of $\cO(K)/\kq\,\cO(K)$ at~$\kp$.
Set $T:=\{s\in S\mid f(s)=0\textrm{ for all }f\in\kq\}$.

The restriction morphism $\cO(K)/\kq\,\cO(K)\to\cO(T\cap K)$ is surjective by Cartan's Theorem B. It is also injective, as for any relatively compact Stein open neighborhood $U$ of $K$ in~$S$, one can find $(f_i)_{1\leq i\leq k}$ in $\kq$ such that $\cO_U^{\oplus k}\xrightarrow{(f_i)}\cI_T|_{U}$ is onto, and apply Cartan's Theorem B to show that $\cI_T(K)=\kq\,\cO(K)$. After replacing $S$ with $T$ and $K$ with $T\cap K$, we may thus assume that $\kq=0$ (and $S$ is reduced).

Let $s\in K$ and $Z$ be as in Lemma \ref{regularlemma}. As $\kq=0$, no nonzero element of $\cO(S)$ vanishes on the germ of $Z$ at $s$. As $S$ is reduced, $S^{\sing}$ has a nonzero equation in~$\cO(S)$, and hence the germ of $Z$ at $s$ is not included in $S^{\sing}$. By Lemma \ref{regularlemma}, the ring $\cO(K)_{\kp}$ is regular, as wanted.
\end{proof}

\subsection{Meromorphic functions and ramified coverings}

A finite holomorphic map $p:T\to S$ between reduced complex spaces is an \textit{analytic covering}
if there exists a nowhere dense closed analytic subset $\Sigma\subset S$ such that $p^{-1}(\Sigma)$ is nowhere dense in $T$ and ${p|_{p^{-1}(S\setminus \Sigma)}:p^{-1}(S\setminus \Sigma)\to S\setminus \Sigma}$ is a local biholomorphism (unlike in \cite[7, \S 2.1]{GRCoherent},  we do not insist that~$p$ be surjective).  It is said to be \textit{of degree}~$d$ if the fibers of~$p|_{p^{-1}(S\setminus \Sigma)}$ have cardinality $d$.

We let $\cM$ denote the sheaf of germs of meromorphic functions on a complex space (see \cite[6, \S 3.1]{GRCoherent}). 
If~$S$ is a reduced and irreducible Stein space, then~$\cM(S)$ is the fraction field of the domain~$\cO(S)$ (because the sheaf of denominators of $h\in\cM(S)$ defined in \cite[6, \S 3.2]{GRCoherent} is coherent and nonzero, hence admits a nonzero global section).
The following proposition is classical in dimension $1$, see \eg \cite[1, \S 4.14, Corollaries 4 and~5]{ShokuRiemann}.

\begin{prop}
\label{eqcat}
Let $S$ be a reduced and irreducible Stein space. Then the functor 
$$
 \left\{  \begin{array}{l}
    \textrm{\hspace{.2em}analytic coverings }p:T\to S\\\textrm{\hspace{0em}with $T$ connected and normal}
  \end{array}\right\}\to
 \left\{  \begin{array}{l}
    \textrm{finite field extensions }\\\hspace{2.2em}\cM(S)\subset F
  \end{array}\right\}
$$
which associates with $p:T\to S$ the extension $\cM(S)\subset\cM(T)$ of meromorphic function fields is an equivalence of categories.
\end{prop}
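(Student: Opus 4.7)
The plan is to verify that the functor is essentially surjective, faithful, and full. For \emph{essential surjectivity}, given a finite field extension $F/\cM(S)$ of degree $d$, pick a primitive element $\alpha\in F$ and, using that $\cM(S)=\Frac(\cO(S))$, multiply $\alpha$ by a suitable nonzero element of $\cO(S)$ so that its minimal polynomial becomes $P(t)=t^d+a_{d-1}t^{d-1}+\cdots+a_0\in \cO(S)[t]$, monic and irreducible over $\cM(S)$. The closed analytic subspace $T'\subset S\times\C$ cut out by $P(t)=0$ maps finitely to $S$ via the first projection ($P$ being monic makes the map proper with finite fibers), and is a local biholomorphism away from the discriminant locus $\Delta$ of $P$. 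Since $P$ is irreducible over $\cM(S)$, the reduced space $(T')^{\red}$ is irreducible; its normalization $T$ is therefore a connected normal complex space, and $T\to S$ is an analytic covering of degree $d$. Since $\cO(T)$ contains the tautological function $\alpha$, the extension $\cM(T)/\cM(S)$ contains $F$, and an equality of degrees forces $\cM(T)=F$.

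\emph{Faithfulness} follows by density on the unramified locus: if $f_1,f_2\colon T\to T'$ over $S$ induce the same map on meromorphic functions, pick a nowhere dense closed analytic subset $\Sigma\subset S$ above which both $p\colon T\to S$ and $q\colon T'\to S$ are local biholomorphisms; over the open dense $T\setminus p^{-1}(\Sigma)$ the maps $f_i$ are locally determined by their pullbacks on stalks of holomorphic functions, and hence by their action on meromorphic functions, so $f_1=f_2$ on a dense open and therefore everywhere by Hausdorffness of~$T'$. For \emph{fullness}, given an embedding $\phi\colon \cM(T')\hookrightarrow \cM(T)$ of $\cM(S)$-algebras, form the analytic fibered product $Z:=T\times_S T'$, which is finite over~$T$. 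Since we are in characteristic zero, $F\otimes_{\cM(S)}F'$ is a reduced finite $F$-algebra and decomposes as a finite product of fields $\prod_i L_i$; these factors match the irreducible components $Z_i$ of $Z^{\red}$, each finite over $T$ with $\cM(Z_i)=L_i$. The $\cM(S)$-algebra surjection $F\otimes_{\cM(S)}F'\twoheadrightarrow F$ sending $a\otimes b\mapsto a\,\phi(b)$ selects the factor $L_{i_0}\cong F$; letting $W$ be the normalization of the corresponding component $Z_{i_0}$, the projection $W\to T$ is a finite map of connected normal complex spaces of generic degree $[L_{i_0}:F]=1$ and hence a biholomorphism (a finite birational morphism of normal complex spaces is an isomorphism). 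The composition $T\isoto W\to Z\to T'$ is the desired morphism, and induces $\phi$ by construction.

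The main obstacle is establishing rigorously the dictionary between the irreducible components of $Z^{\red}$ and the factorization of $F\otimes_{\cM(S)}F'$ into fields. This relies on the analytic existence of the normalization (Grauert--Remmert) together with the identification $\cM(Z^{\red})=F\otimes_{\cM(S)}F'$, which in turn uses that for a finite morphism the analytic fibered product coincides locally with the algebraic tensor product of stalks. Minor care is also required when $T$ and $T'$ ramify above overlapping analytic subsets of $S$, but this is absorbed into the normalization step; no further fibration or Stein-theoretic input beyond what is already in \S\ref{parStein} is needed.
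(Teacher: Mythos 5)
Your overall architecture (essential surjectivity via a Weierstrass-type cover, full faithfulness via fiber products) is reasonable, but two load-bearing steps are missing, and both are precisely where the paper spends its effort. First, you never prove that the functor is \emph{well defined}, i.e.\ that for an arbitrary analytic covering $p:T\to S$ of degree $d$ with $T$ connected and normal, $\cM(S)\subset\cM(T)$ is a finite field extension of degree $\leq d$. This is not decorative: your essential surjectivity ends with ``an equality of degrees forces $\cM(T)=F$'', which only follows from $F\subseteq\cM(T)$ and $[F:\cM(S)]=d$ if you already know $[\cM(T):\cM(S)]\leq d$. The paper's first paragraph establishes exactly this bound, and it takes real work: one must discard the singular loci and the ramification divisor, use that complements of analytic sets in a connected normal space stay connected, invoke the degree bound for unramified coverings of manifolds, and then extend meromorphic functions back across the removed codimension-$\geq 2$ set. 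Relatedly, your claim that irreducibility of $P$ over $\cM(S)$ forces $(T')^{\red}$ to be irreducible needs $S$ normal (the symmetric functions of the roots on a putative proper union of sheets must extend across the discriminant locus, which uses Riemann extension on a normal base); the paper sidesteps both issues by first replacing $S$ with its normalization and then simply choosing an irreducible component dominating $S$, relying on the degree bound rather than on irreducibility.

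Second, your fullness argument hinges on the dictionary between irreducible components of $(T\times_S T')^{\red}$ and the field factors of $\cM(T)\otimes_{\cM(S)}\cM(T')$, together with $\cM(Z_{i_0})=L_{i_0}$. You correctly flag this as ``the main obstacle'', but you do not resolve it, and resolving it honestly amounts to re-proving the degree bound above over the unramified locus plus an extension argument for the idempotents of $F\otimes_{\cM(S)}F'$ — so as written this is a gap, not a technicality. The paper takes a genuinely shorter route here: by a theorem of Issa \cite{Isssa}, \emph{every} $\C$-algebra homomorphism $\cM(T_1)\to\cM(T_2)$ between meromorphic function fields of connected normal Stein spaces is induced by a unique holomorphic map $T_2\to T_1$, and it is an $\cM(S)$-algebra map iff the triangle over $S$ commutes; full faithfulness is then immediate, with no fiber products and no component bookkeeping. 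Your faithfulness paragraph (separating points of a fiber by global holomorphic functions on the Stein space $T'$ and using the identity theorem) is essentially sound, but I would recommend replacing the entire full-faithfulness section by the appeal to Issa's theorem, and adding the normalization reduction and the degree-bound paragraph at the start.
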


\begin{proof}
After replacing $S$ with its normalization (which is legitimate by \cite[8, \S 1.3, Lifting Lemma]{GRCoherent}), we may assume that $S$ is connected and normal.

We first show that the functor is well-defined. Let $S^{\sing}$ and $T^{\sing}$ be the singular loci of $S$ and $T$.
Then $Z:=S^{\sing}\cup p(T^{\sing})$ is a closed analytic subset of~$S$ which has codimension $\geq 2$ by  \cite[6, \S 5.3]{GRCoherent}. Set $U:=S\setminus Z$. The locus $W\subset T\setminus p^{-1}(Z)$ where $p$ is not a local biholomorphism is an analytic subset of pure codimension $1$ because it is the zero locus of the Jabobian of $p$.  Over $U\setminus p(W)$, the map $p$ is a finite local biholomorphism. As $U\setminus p(W)$ is connected by \cite[9, \S1.2]{GRCoherent}, the map $p|_{p^{-1}(U)}:p^{-1}(U)\to U$ is a degree $d$ analytic covering of complex manifolds for some $d\geq 1$. It then follows from \cite[7, \S 3.1, Corollary~2]{GRCoherent} that the field extension $\cM(U)\subset\cM(p^{-1}(U))$ is finite (of degree~$\leq d$). As $\cM(S)=\cM(U)$ and $\cM(T)=\cM(p^{-1}(U))$ by \cite[9, \S 5.2]{GRCoherent}, the field extension $\cM(S)\subset\cM(T)$ is finite (of degree~$\leq d$).

Let $p_1:T_1\to S$ and $p_2:T_2\to S$ be analytic coverings of connected normal Stein spaces.  By \cite[Theorem II]{Isssa}, any morphism $\cM(T_1)\subset \cM(T_2)$ of $\C$-algebras is induced by a unique holomorphic map $f:T_2\to T_1$,  and the former is a morphism of $\cM(S)$-algebras if and only if $p_2=p_1\circ f$. The functor is therefore full and faithful.

We finally prove that the functor is essentially surjective.  Let $F$ be a finite extension of degree $d$ of $\cM(S)$. Let $P(x)=x^d+\sum_{i=0}^{d-1} a_ix^i\in\cM(S)[x]$ be an irreducible polynomial with splitting field $F$.
Let $b\in\cO(S)$ be a nonzero element with $ba_i\in\cO(S)$ for $i\in\{0,\dots, d-1\}$, and let $R\subset\P^1(\C)\times S$ be the zero locus of $b X^d+\sum_{i=0}^{d-1} ba_iX^iY^{d-i}$,
where $[X:Y]$ are homogeneous coordinates of~$\P^1(\C)$. Let~$R'$ be an irreducible component of $R$  dominating $S$, let $R'\to R''\to S$ be the Stein factorization of the projection $R'\to S$ (see \cite[10, \S 6.1]{GRCoherent}), and let~$T$ be the normalization of $R''$, with projection $p:T\to S$. As the inverse image by~$R\to S$ of a general point of $S$ has cardinality $\leq d$, the same holds for the inverse image by $p$ of a general point of $S$ (note that the normalization morphism~$T\to R''$ is an isomorphism above the complement of a discrete subset of $R''$).  We deduce from the first paragraph of this proof that the field extension ${\cM(S)\subset\cM(T)}$ has degree~$\leq d$. As the element $X/Y\in\cM(T)$ is annihilated by $P$, we get inclusions ${\cM(S)\subset F\subset\cM(T)}$. A degree argument now shows that $F$ and~$\cM(T)$ are isomorphic extensions (of degree~$d$) of~$\cM(S)$.
\end{proof}

\begin{rem}
It follows from the proof of Proposition \ref{eqcat} that a finite extension $\cM(S)\subset F$ of degree $d$ corresponds to an analytic covering $p:T\to S$ of degree $d$.
\end{rem}

We deduce at once from Proposition \ref{eqcat} the following corollary.

\begin{cor}
\label{eqcatetale}
Let $S$ be a reduced Stein space with finitely many irreducible components. 
Associating with $p:T\to S$ the $\cM(S)$-algebra $\cM(T)$ induces an equivalence of categories
$$
 \left\{  \begin{array}{l}
    \textrm{\hspace{0em}analytic coverings }p:T\to S\\\hspace{3em}\textrm{with $T$ normal}
  \end{array}\right\}\to
 \left\{  \begin{array}{l}
    \textrm{\hspace{.9em}finite \'etale}\\\cM(S)\textrm{-algebras}
  \end{array}\right\}.
$$
\end{cor}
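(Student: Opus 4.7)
The strategy is to reduce Corollary~\ref{eqcatetale} to Proposition~\ref{eqcat} by decomposing both sides of the equivalence into irreducible/connected pieces. Let $S_1,\dots,S_r$ be the irreducible components of the reduced Stein space~$S$; each is itself a reduced irreducible Stein space, and the pairwise intersections $S_i\cap S_j$ are nowhere dense in either factor. A standard local computation (using the description of $\cM$ as the sheaf of total rings of fractions of $\cO_S$) yields a ring isomorphism
$$\cM(S)\isoto\prod_{i=1}^r\cM(S_i),$$
which expresses $\cM(S)$ as a finite product of fields of characteristic~$0$. Consequently, any finite \'etale $\cM(S)$-algebra splits uniquely as a finite product $\prod_i\prod_{\alpha\in A_i}F_{i,\alpha}$ with each $A_i$ finite and each $F_{i,\alpha}$ a finite field extension of~$\cM(S_i)$.

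On the geometric side, let $p:T\to S$ be an analytic covering with $T$ normal, and decompose $T=\bigsqcup_\alpha T_\alpha$ into its connected components; each $T_\alpha$ is a normal connected Stein space. Since $T_\alpha$ is open in~$T$ and $p^{-1}(\Sigma)\subset T$ is nowhere dense, one has $T_\alpha\cap p^{-1}(S\setminus\Sigma)\neq\varnothing$, and because $p$ is a local biholomorphism there, the image $p(T_\alpha)\cap(S\setminus\Sigma)$ is a nonempty open subset of $S\setminus\Sigma$. The irreducible closed analytic subset $p(T_\alpha)\subset S$ is therefore contained in, and contains a nonempty open subset of, a unique irreducible component $S_{i(\alpha)}$ of~$S$, so $p(T_\alpha)=S_{i(\alpha)}$. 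Thus $p|_{T_\alpha}:T_\alpha\to S_{i(\alpha)}$ is a surjective analytic covering of a connected normal Stein space, and the finiteness of the fibers of $p$ ensures that only finitely many $\alpha$ satisfy $i(\alpha)=i$ for each fixed~$i$.

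Applying Proposition~\ref{eqcat} to each pair $(T_\alpha,S_{i(\alpha)})$ produces a finite field extension $\cM(S_{i(\alpha)})\subset\cM(T_\alpha)$, and taking products gives
$$\cM(T)=\prod_{i=1}^r\prod_{\alpha\in A_i}\cM(T_\alpha)$$
as a $\cM(S)$-algebra via the $p^*$'s, which matches termwise the decomposition of a finite \'etale $\cM(S)$-algebra obtained in the first paragraph. Essential surjectivity, fullness, and faithfulness then follow factor by factor from Proposition~\ref{eqcat}. The main technical points to verify are the algebraic decomposition $\cM(S)\cong\prod_i\cM(S_i)$ and the fact that each connected component of~$T$ covers a unique irreducible component of~$S$; both are routine consequences of the local structure of reduced complex spaces with finitely many irreducible components.
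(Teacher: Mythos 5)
Your proof is correct and takes essentially the same route as the paper, whose entire proof of this corollary is the one-line assertion that it follows at once from Proposition~\ref{eqcat}; your write-up simply makes that deduction explicit (decomposing $\cM(S)\cong\prod_i\cM(S_i)$, matching connected components of $T$ with irreducible components of $S$, and checking the equivalence factorwise).
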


The next lemma will be used in the proofs of Propositions \ref{finitecompact} (ii) and~\ref{killloc}.

\begin{lem}
\label{Galoisclosure}
Let $q:R\to S$ be an analytic covering of connected normal Stein spaces. There exists an analytic covering $p':T\to R$ of connected normal Stein spaces and a finite group $\Gamma$ acting on $T$ and acting trivially on~$S$, such that $p:=q\circ p'$ is $\Gamma$-equivariant and $\Gamma$ acts transitively on the fibers of~$p$. 
\end{lem}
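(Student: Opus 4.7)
The plan is to take the Galois closure of $\cM(R)$ over $\cM(S)$ in the sense of fields and transport it back to the analytic category via the equivalence of categories in Proposition~\ref{eqcat}. Concretely, Proposition~\ref{eqcat} identifies the analytic covering $q:R\to S$ with a finite field extension $\cM(S)\subset\cM(R)$; let $F/\cM(S)$ be its Galois closure and $\Gamma:=\Gal(F/\cM(S))$. Applying Proposition~\ref{eqcat} in the opposite direction to the finite extension $\cM(S)\subset F$, we obtain an analytic covering $p:T\to S$ of connected normal Stein spaces with $\cM(T)\simeq F$ as $\cM(S)$-algebras.

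The $\cM(S)$-algebra inclusion $\cM(R)\hookrightarrow\cM(T)\simeq F$ corresponds, by the full faithfulness in Proposition~\ref{eqcat}, to a holomorphic $S$-morphism $p':T\to R$ with $p=q\circ p'$. It is automatically finite (proper with finite fibers, since $q$ is separated and $p$ is finite), and surjective: otherwise $p'(T)$ would be a proper closed analytic subset of the irreducible Stein space $R$, and by Cartan's Theorem~A its ideal sheaf would admit a nonzero section $f\in\cO(R)$ with $p'^*f=0\in\cM(T)$, contradicting injectivity of $\cM(R)\hookrightarrow\cM(T)$. Hence $p'$ is itself an analytic covering of connected normal Stein spaces. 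Similarly, each $\sigma\in\Gamma$, being an $\cM(S)$-algebra automorphism of $\cM(T)$, corresponds by full faithfulness to a unique holomorphic $S$-automorphism $\widetilde\sigma$ of $T$. Uniqueness makes $\sigma\mapsto\widetilde\sigma$ a group homomorphism, giving the desired $\Gamma$-action on $T$ (trivial on $S$) under which $p$ is equivariant.

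It remains to check that $\Gamma$ acts transitively on the fibers of $p$. The quotient $T/\Gamma$ exists as a normal complex space by H.~Cartan's theorem on quotients by finite group actions, is connected (as $T$ is), and is Stein (being finite over the Stein space $S$); its meromorphic function field is $\cM(T)^\Gamma=F^\Gamma=\cM(S)$. Proposition~\ref{eqcat} applied to the induced finite holomorphic map $T/\Gamma\to S$ then identifies it with the identity analytic covering of $S$, so it is a biholomorphism, and the $\Gamma$-orbits on $T$ coincide with the fibers of $p$. The principal technical input, and the main potential obstacle, is the existence of the complex-analytic quotient $T/\Gamma$ together with the identification $\cM(T/\Gamma)=\cM(T)^\Gamma$; these are classical but non-trivial facts of complex-analytic geometry.
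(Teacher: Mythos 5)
Your proof is correct and follows essentially the same route as the paper: pass to the Galois closure $F$ of $\cM(S)\subset\cM(R)$, realize it analytically via Proposition~\ref{eqcat}, obtain the $\Gamma$-action by functoriality, and deduce transitivity on fibers from the identification $\cM(T/\Gamma)=\cM(T)^{\Gamma}=\cM(S)$ forcing $T/\Gamma\to S$ to be a biholomorphism. The only (harmless) difference is that the paper obtains $p':T\to R$ directly by applying the equivalence of categories over the base $R$ to the extension $\cM(R)\subset F$, which spares your explicit verification that $p'$ is a surjective analytic covering.
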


\begin{proof}
The field extension $\cM(S)\subset \cM(R)$ is finite by Proposition \ref{eqcat}. Let $\cM(S)\subset F$ be its Galois closure, with Galois group $\Gamma$. Let $p':T\to R$ be the analytic covering associated with $\cM(R)\subset F$ through the equivalence of categories of Proposition~\ref{eqcat}. By functoriality, the finite group~$\Gamma$ acts on $T$ and $p:=q\circ p'$ is $\Gamma$\nobreakdash-equivariant. The quotient complex space $T/\Gamma$ (see \cite[Theorem~4]{Cartan}) is connected, normal and admits a natural finite holomorphic map $T/\Gamma\to S$.
One deduces from the field inclusions $\cM(S)\subset \cM(T/\Gamma)\subset\cM(T)^{\Gamma}=\cM(S)$ that $\cM(S)=\cM(T/\Gamma)$. In view of Proposition~\ref{eqcat}, the projection $T/\Gamma\to S$ is a biholomorphism. Consequently, the group $\Gamma$ acts transitively on each fiber of~$p$.
\end{proof}

\section{Killing cohomology classes on finite coverings}
\label{parunr}

The goal of this section is Proposition \ref{killunram}, proven in \S\ref{paru3} by induction on the degree of a cohomology class. We cover the base case of the induction in \S\ref{paru1} and the induction procedure relies on Grauert's bump method described in~\S\ref{paru2}.

\subsection{Finite coverings of \texorpdfstring{$\cO(S)$}{O(S)}-convex compact subsets}
\label{paru1}

 The following proposition will be key in dealing with degree $1$ cohomology classes.

\begin{prop}
\label{finitecompact}
Let $S$ be a Stein manifold of dimension $n$, let $K\subset S$ be an $\cO(S)$\nobreakdash-convex compact subset, 
let $U\subset S$ be an open neighborhood of $K$ and let ${f:\widetilde{U}\to U}$ be a finite surjective local biholomorphism.
\begin{enumerate}[(i)]
\item After maybe shrinking $U$, there exist a finite surjective holomorphic map ${q:R\to S}$ and an open embedding $h:\wU\hookrightarrow R$ such that $q\circ h=f$.
\item After maybe shrinking $U$,  there exist a finite surjective holomorphic map ${p:T\to S}$ and a holomorphic map $g:p^{-1}(U)\to\wU$ with $f\circ g=p|_{p^{-1}(U)}$.
\end{enumerate}
\end{prop}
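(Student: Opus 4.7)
The plan is to encode the finite \'etale covering $f$ as the normalized zero locus of a monic polynomial $P \in \cO(U)[t]$, approximate the coefficients of $P$ by global sections on $S$ using the Oka--Weil theorem (available because $K$ is $\cO(S)$-convex), and take the normalization of the resulting hypersurface in $S \times \C$ as the sought extension $q : R \to S$. Part~(ii) will then follow from (i) by a degree count.

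After shrinking $U$ to a Runge Stein neighborhood of $K$ (Lemma \ref{lemRunge}) and treating connected components separately, I may assume that $U$ and $\widetilde U$ are connected and that $f$ has constant degree $d$. The sheaf $f_* \cO_{\widetilde U}$ is a locally free $\cO_U$-algebra of rank $d$ and admits finitely many global generators by Cartan's Theorem~A; a generic $\C$-linear combination of them yields $y \in \cO(\widetilde U)$ whose values separate the $d$ sheets of $f$ pointwise (so that $y$ is a primitive element of $\cM(\widetilde U)/\cM(U)$). The characteristic polynomial of multiplication by $y$ on $f_* \cO_{\widetilde U}$ is a monic polynomial
\[
P(t) = t^d + a_{d-1} t^{d-1} + \cdots + a_0 \in \cO(U)[t]
\]
with nowhere-vanishing discriminant on $U$, and $\widetilde U$ is identified with the smooth hypersurface $\{P=0\} \subset U \times \C$ via $\tilde u \mapsto (f(\tilde u), y(\tilde u))$.

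To prove (i), I will invoke the Oka--Weil theorem (\cite[Theorem 2.8.4]{Forstneric}) to approximate each $a_i$ uniformly on $K$ by some $\tilde a_i \in \cO(S)$. For a sufficiently close approximation, the resulting $\tilde P := t^d + \sum_{i<d} \tilde a_i t^i \in \cO(S)[t]$ has non-vanishing discriminant on some open neighborhood $U' \subset U$ of $K$, and hence no repeated irreducible factor in $\cM(S)[t]$. Define $R$ as the normalization of $\{\tilde P = 0\} \subset S \times \C$: the projection $q : R \to S$ is finite, surjective, of degree $d$, and unramified over $U'$. For each $\tilde u \in f^{-1}(U')$, the implicit function theorem produces a unique root $\tilde y(\tilde u)$ of $\tilde P(f(\tilde u), \cdot)$ close to $y(\tilde u)$, depending holomorphically on $\tilde u$. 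Sending $\tilde u \mapsto (f(\tilde u), \tilde y(\tilde u))$ defines a holomorphic map into the smooth locus of $\{\tilde P = 0\}$ (which coincides there with $R$), and after replacing $U$ by $U'$ yields an open embedding $h : \widetilde U \hookrightarrow R$ with $q \circ h = f$; injectivity follows from the separation of sheets by $y$ together with the proximity of $\tilde y$ to $y$.

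For (ii), I retain the degree-$d$ map $q : R \to S$ from (i). Since $q$ is separated and $f = q \circ h$ is proper, the open embedding $h : \widetilde U \to q^{-1}(U)$ is proper and hence closed, so $h(\widetilde U)$ is open and closed in $q^{-1}(U)$. Comparing cardinalities of fibers over each $u \in U$, one has $|h(f^{-1}(u))| = d$ by injectivity while $|q^{-1}(u)| \leq d$ since $q$ has degree $d$, forcing $h(\widetilde U) = q^{-1}(U)$; setting $T := R$, $p := q$ and $g := h^{-1}|_{q^{-1}(U)}$ concludes. The main difficulty throughout will be to control the Oka--Weil approximation precisely enough that the discriminant of $\tilde P$ remains bounded away from zero on an entire neighborhood of $K$, because only then does the implicit function theorem produce the holomorphic map $h$ that drives both parts.
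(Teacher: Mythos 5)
Your proof of (i) has a genuine gap at the very first step: the global primitive element $y\in\cO(\wU)$ with nowhere-vanishing discriminant need not exist, and no choice of linear combination of generators (generic or not) can produce one. For a fixed $u$ the bad linear combinations form a finite union of hyperplanes in $\C^r$, but the union of these over all $u\in U$ is the image of a hypersurface in $U\times\C^r$ and can be all of $\C^r$; the obstruction is in fact topological and survives shrinking $U$ around $K$. Concretely, take $\wU=\{x^2+y^2+z^2=1\}\subset\C^3$, let $S=U=\wU/\{\pm 1\}$ (a smooth affine, hence Stein, surface), let $f$ be the quotient map, and let $K$ be the image of the real sphere; $K$ is the minimum set of the strictly plurisubharmonic exhaustion $|x|^2+|y|^2+|z|^2$, hence $\cO(S)$-convex by Lemma \ref{critpsh}, and it is a deformation retract of $U$. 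A holomorphic $y$ separating the two sheets over any neighborhood of $K$ would restrict on the real sphere to a nowhere-vanishing odd function $S^2\to\C^*$, hence yield an odd map $S^2\to S^1$, contradicting Borsuk--Ulam. (Equivalently, $f_*\cO_{\wU}=\cO_U\oplus L$ with $L$ a nontrivial $2$-torsion line bundle, and a separating $y$ is a nowhere-vanishing section of $L$.) This is exactly why the paper does not realize $\wU$ as a hypersurface in $U\times\C$: it embeds $\wU$ into $\C^{2n+1}\times U$ by $z\mapsto(i(z),f(z))$, cuts out the image inside a tubular neighborhood as the zero locus of a submersion $\phi:\Omega\to\C^{2n+1}$, approximates $\phi$ on $K$ by a map polynomial in the fiber variables with coefficients in $\cO(S)$ via Oka--Weil, and obtains $q:R\to S$ from the Stein factorization of the projective closure of the resulting zero locus. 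Your remaining steps for (i) (Rouch\'e/implicit function theorem, injectivity from the sheet separation) would be fine if the hypersurface model existed, but it does not in general.

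Your argument for (ii) is an appealing shortcut, but it depends entirely on $q$ having degree exactly $d$ with every fiber of cardinality $\leq d$, which only your (flawed) construction of (i) would supply. The paper's construction of $R$ (as a component of the zero locus of $2n+1$ equations in $\P^{2n+1}(\C)\times S$) gives no such degree control, so the identity $h(\wU)=q^{-1}(U)$ fails there and a retraction $g$ must be produced differently: the paper passes to a Galois closure $p':T\to R$ of $q$ (Lemma \ref{Galoisclosure}) and uses the transitivity of the Galois group on the fibers of $p=q\circ p'$ to map each connected component of $p^{-1}(U)$ into $p'^{-1}(h(\wU))$. Once (i) is repaired along the paper's lines, you will need an argument of this kind for (ii) as well.
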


\begin{proof}[Proof of (i)]
By Lemma \ref{lemRunge},  we may assume that $U$ is Stein after shrinking it, hence that so is $\wU$ by \cite[V, \S 1, Theorem 1 d)]{GRStein}. 
By \cite[Theorem 3]{Narasimhan}, one can then find an embedding $i:\wU\hookrightarrow \C^N$ (with $N=2n+1$). Consider the embedding $j:\wU\hookrightarrow \C^N\times U$ defined by $j(z)=(i(z),f(z))$.
Write $\pr_1:\C^N\times U\to\C^N$ and $\pr_2:\C^N\times U\to U$ for the projections.
For any continuous $\varepsilon:U\to ]0,+\infty[$, define
\begin{equation}
\label{tubul}
\Theta_{\varepsilon}:=\{(a,b)\in \C^N\times U\mid\exists\,c\in\wU \textrm{ with } f(c)=b \textrm{ and }|a-i(c)|<\varepsilon(b)\}.
\end{equation}
Choose $\varepsilon$ sufficiently small so that the element $c$ in (\ref{tubul}) is always unique and define the map~$\pi:\Theta_{\varepsilon}\to \wU$ by the property that $\pi(a,b)=c$.
Geometrically, the subset~$\Theta_{\varepsilon}$ of $\C^N\times U$ is a tubular neighborhood of $j(\wU)$ in the sense that the retraction $j\circ \pi:\Theta_{\varepsilon}\to j(\wU)$ of the inclusion map is a disc bundle.
By \cite[Proposition~2.1]{CM}, there exists a neighborhood~$\Omega$ of $j(\wU)$ in $\Theta_{\varepsilon}$ that is Runge in~$\C^N\times U$.
Finally, choose $\delta:U \to]0,+\infty[$ continuous sufficiently small so that $\overline{\Theta_{\delta}}\subset \Omega$.

Define a holomorphic map $\phi:\Omega\to \C^N$ by setting $\phi(z)=\pr_1(z)-i\circ\pi(z)$.  The map $\phi$ is submersive along $\overline{\Theta_{\delta}}$,  and its zero locus in $\overline{\Theta_{\delta}}$ is included in $\Theta_{\delta}$ and projects biholomorphically to $\wU$ by $\pi$.
All these properties persist (after maybe shrinking~$U$) if one replaces $\phi$ with a holomorphic map $\phi':\Omega\to\C^N$ close enough to $\phi$ on $\overline{\Theta_{\delta}}\cap \pr_2^{-1}(K)$ in the $C^0$-topology (hence in the $C^1$-topology by the Cauchy estimates).
We construct such a $\phi'$ as follows. First approximate $\phi$ by the restriction of a holomorphic map $\C^N\times U\to\C^N$ using that~$\Omega$ is Runge in $\C^N\times U$. Expand this map as $N$ power series in $N$ variables with coefficients in $\cO(U)$, truncate these power series to turn them into~$N$ polynomials in $N$ variables with coefficients in~$\cO(U)$,  and then use the Oka--Weil approximation theorem \cite[VII, A, Theorem 6]{GunningRossi} to approximate these coefficients on~$K$ by restrictions of elements of $\cO(S)$.  Let $\psi:\C^N\times S\to\C^N$ be the holomorphic map (polynomial in the first variable) obtained in this way and set $\phi':=\psi|_{\Omega}$.

Let $\Psi_1,\dots,\Psi_N\in\cO(S)[X_0,\dots,X_N]$ be the homogenizations of the components of $\psi$, and let $\whR\subset\P^N(\C)\times S$ be the locus where they all vanish, with projection $\hq:\whR\to S$. The map $\hq$ is surjective by \cite[I, Theorem 7.2]{Hartshorne}. Hence so is the finite holomorphic map $q:R\to S$ appearing in the Stein factorization \cite[10,~\S 6.1]{GRCoherent} of $\hq$. By our choice of $\phi'$, the intersection of $\whR$ with $\Theta_{\delta}$ is biholomorphic (via the map $\pi$) to $\wU$. The resulting open embedding $\hh:\wU\to \whR$ satisfies $\hq\circ\hh=f$. 

The uniqueness of the Stein factorization shows that it is compatible with base change by open embeddings. As $\hh(\wU)$ is a connected component of $\hq^{-1}(U)$ such that $\hq|_{\hh(\wU)}:\hh(\wU)\to U$ is finite (as $f$ is), we deduce that the composition of $\hh$ with the natural map $\whR\to R$ is an open embedding $h:\wU\to R$ such that $q\circ h=f$.
\end{proof}

\begin{proof}[Proof of (ii)]
Let $q:R\to S$,  and $h:\wU\hookrightarrow Y$ be as in (i). We may assume that~$R$ is normal after normalizing it, and that $S$ is connected after replacing it with one of its connected components. After shrinking $U$, we may suppose that it has finitely many connected components $U_1,\dots,U_r$. If $p_i:T_i\to S$ solves our problem for $f|_{f^{-1}(U_i)}:f^{-1}(U_i)\to U_i$ instead of $f$, then we may choose $T$ to be the fiber product $T_1\times_S\dots\times_S T_r$. We may thus assume that~$U$ is connected. Replacing~$\wU$ with one of its connected components, we may also suppose that~$\wU$ is connected. Finally replacing $R$ with its connected component containing~$h(\wU)$, we reduce to the case where $R$ is connected.

Let $p':T\to R$ and $\Gamma$ be as in Lemma~\ref{Galoisclosure}, and set $p:=q\circ p'$. Let $V$ be a connected component of~$p^{-1}(U)$.  By the transitivity of the action of $\Gamma$ on the fibers of $p$, one may choose $\gamma_V\in\Gamma$ such that $\gamma_V(V)\subset p'^{-1}(\wU)$. One then defines $g(x):=p'\circ\gamma_V(x)$ for $x\in V$.
\end{proof}

\subsection{Grauert's bump method}
\label{paru2}

To prove Proposition \ref{killunram}, we will use Grauert's bump method as developed by Henkin and Leiterer \cite{HL}. 
In Proposition \ref{bump} below, we sum up what we exactly need, relying on the exposition by Forstneri\v{c}~\cite{Forstneric}.

\begin{lem}
\label{coveringballs}
Let $S$ be a Stein manifold of dimension $n$. Any point $s\in S$ admits an $\cO(S)$-convex compact neighborhood $B\subset S$ such that $B$ admits a basis of contractible open neighborhoods in $S$.
\end{lem}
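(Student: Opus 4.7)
The plan is to exploit the embedding of Stein manifolds into affine space. By \cite[Theorem~5]{Narasimhan}, there exists a proper injective holomorphic immersion $j:S\to\C^{2n+1}$; after translation assume $j(s)=0$, and set $\rho:=|j|^2:S\to\R$. Since $j$ is an immersion, $\partial\bar\partial\rho=\sum_{k=1}^{2n+1} dj_k\wedge\overline{dj_k}$ is a positive definite $(1,1)$-form on $S$, so $\rho$ is strongly plurisubharmonic; since $j$ is proper and injective, $\rho$ has $s$ as its unique zero and all its sublevel sets are compact. Setting $B:=\{s'\in S\mid\rho(s')\leq r\}$ for some $r>0$ therefore yields, by Lemma~\ref{critpsh}~(ii), a compact $\cO(S)$-convex neighborhood of~$s$.

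To produce a basis of contractible open neighborhoods of $B$, I would use the open sublevel sets $U_t:=\{\rho<t\}$ for $t>r$: these manifestly form a basis of open neighborhoods of $B$, by compactness of $B$. Contractibility is a local computation: choose linear coordinates on $\C^{2n+1}$ such that $dj(T_sS)=\C^n\times\{0\}$. By the holomorphic implicit function theorem, $j(S)$ is locally near $0$ the graph of a holomorphic map $\phi:V\to\C^{n+1}$ with $\phi(0)=0$ and $d\phi(0)=0$, for some open ball $V\subset\C^n$. Pulled back via this parametrization, $\rho$ reads $\tilde\rho(z)=|z|^2+|\phi(z)|^2$, and the estimate $\phi(z)=O(|z|^2)$ yields $\tfrac{d}{dt}\tilde\rho(tz)=2t|z|^2+O(t^3|z|^3)>0$ on $(0,1]$ for $|z|$ small. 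Hence $\tilde\rho$ is strictly radially increasing near $0$, so for $r$ small each $U_t$ with $t$ slightly above $r$ corresponds under this chart to a star-shaped open subset of $V$ through $0$, which is therefore contractible.

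The only step requiring some care is ensuring that the relevant sublevel sets actually lie inside the chart at~$s$; but $\rho^{-1}(0)=\{s\}$ combined with the properness of $\rho$ forces $\bigcap_{t>0}\{\rho\leq t\}=\{s\}$, so taking $r$ small enough places $\{\rho\leq t\}$ (for $t$ just above $r$) inside any preassigned neighborhood of~$s$. Beyond this, the argument combines standard Stein-geometric input (Lemma~\ref{critpsh} and Narasimhan's embedding) with an explicit analysis of the quadratic behavior of~$\rho$ at~$s$; I do not foresee a genuine obstacle.
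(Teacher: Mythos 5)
Your proof is correct and follows essentially the same route as the paper: embed $S$ properly into affine space via Narasimhan's theorem, take $\rho$ to be the squared distance to $s$, and apply Lemma~\ref{critpsh}~(ii) to small compact sublevel sets. The paper leaves the contractibility of the open sublevel sets implicit ("$B:=\{\rho\leq\varepsilon\}$ works for $0<\varepsilon\ll1$"), whereas you supply the star-shapedness argument explicitly; the only nitpick is that for Stein \emph{manifolds} the relevant reference is \cite[Theorem~3]{Narasimhan} (proper embedding) rather than Theorem~5, since you need $j$ to be an immersion for $\rho$ to be strongly plurisubharmonic.
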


\begin{proof}
By \cite[Theorem 3]{Narasimhan}, one may view $S$ as a submanifold of $\C^N$ for $N\gg 0$. Then $\rho:S\to \R$ defined by $\rho(x):=|x-s|^2$ is a $\ci$ strongly psh exhaustion function, and $B:=\{\rho\leq\varepsilon\}$ works for $0<\varepsilon\ll 1$ (see Lemma~\ref{critpsh} (ii)).
\end{proof}

\begin{prop}
\label{bump}
Let $S$ be a Stein manifold of dimension $n$. Then there exist sequences $(A_0,A_1,\dots)$ and $(B_0,B_1,\dots)$ of compact subsets of $S$ such that:
\begin{enumerate}[(i)]
\item the subsets $A_i$ and $B_i$ are $\cO(S)$-convex;
\item the subset $B_i$ has a basis of contractible open neighborhoods;
\item one has $A_0=\varnothing$ and  $A_{i+1}\subset A_i\cup B_i$;
\item for any compact subset $K\subset S$, one has $K\subset A_i$ for $i\gg 0$.
\end{enumerate}
\end{prop}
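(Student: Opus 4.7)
The plan is to exhaust $S$ by a sequence of $\cO(S)$-convex compacta with smooth strongly pseudoconvex boundaries, and to interpolate between two consecutive terms of this exhaustion by finitely many ``bump'' steps, each adding only a small $\cO(S)$-convex contractible piece. This is exactly the framework of Grauert's bump method as developed in \cite{HL} and \cite{Forstneric}.

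First, I would choose a $\ci$ strongly psh exhaustion $\rho:S\to\R$ on the Stein manifold $S$ and, by Sard's theorem, an unbounded increasing sequence of regular values $c_0<c_1<c_2<\cdots$ of $\rho$. Setting $L_k:=\{\rho\leq c_k\}$, Lemma \ref{critpsh}~(ii) ensures that each $L_k$ is an $\cO(S)$-convex compact subset of $S$ with smooth strongly pseudoconvex boundary, and condition (iv) will follow from the fact that $\rho$ is an exhaustion.

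Next, for each $k$, I would cover the compact shell $L_{k+1}\setminus\mathring{L_k}$ by finitely many $\cO(S)$-convex compacta $B_{k,1},\dots,B_{k,m_k}$ of the type supplied by Lemma \ref{coveringballs}, chosen small enough (by shrinking) to serve as local convex bumps in the sense of Grauert. Iterating the Grauert bump lemma as formulated in \cite{HL} and \cite{Forstneric}, one obtains a chain of $\cO(S)$-convex strongly pseudoconvex compacta
\[
L_k=A_{k,0}\subset A_{k,1}\subset\cdots\subset A_{k,m_k}=L_{k+1}
\]
with $A_{k,j+1}\subset A_{k,j}\cup B_{k,j+1}$. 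Concatenating these chains across all $k$ and prepending $A_0:=\varnothing$ yields sequences $(A_i)$ and $(B_i)$ for which (i)--(iv) are routine to check: (i) is built in, (ii) comes from Lemma \ref{coveringballs}, (iii) is the interpolation property, and (iv) holds because any compact $K\subset S$ is contained in some $L_k$.

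The main obstacle is the induction step in the bump construction: attaching a sufficiently small $\cO(S)$-convex convex bump $B$ to a strongly pseudoconvex $\cO(S)$-convex compact $A$ must again yield an $\cO(S)$-convex compact $A'$ with $A\subset A'\subset A\cup B$ that is suitable for further iteration. This requires a careful local modification of the defining plurisubharmonic function near the bump so as to produce a new global strongly psh function on $S$ whose relevant sublevel set is $A'$; $\cO(S)$-convexity then follows from Lemma \ref{critpsh}~(ii). Since this is precisely Grauert's classical bump lemma, we can simply invoke the complete treatment in \cite{HL} and \cite{Forstneric} rather than redoing it.
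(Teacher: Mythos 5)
Your overall framework (exhaust $S$ by sublevel sets of a strongly psh exhaustion $\rho$ and interpolate between consecutive levels by small $\cO(S)$-convex contractible bumps) is the right one, and your noncritical interpolation step is essentially the first half of the paper's proof. But there is a genuine gap: choosing the $c_k$ to be regular values of $\rho$ via Sard's theorem does not eliminate the critical points of $\rho$ lying \emph{inside} the shells $L_{k+1}\setminus\mathring{L_k}$, and in general these cannot be avoided (a strongly psh exhaustion of a Stein manifold with nontrivial topology must have critical points of positive index). The Grauert bump lemma you invoke from \cite{HL} and \cite{Forstneric} is a \emph{noncritical} statement: it lets you pass from $\{\rho\leq c\}$ to $\{\rho\leq c'\}$ by small bumps only when $\rho$ has no critical values in $[c,c']$. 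Across a critical level the sublevel set changes homotopy type by attaching a handle, and the difference $\{\rho\leq c'\}\setminus\{\rho\leq c\}$ is not a union of small convex pieces each of which can be added while keeping all intermediate sets sublevel sets of strongly psh functions; covering the shell by small balls and ``iterating'' does not produce the required chain of $\cO(S)$-convex intermediate compacta.

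This is precisely why the paper's proof first normalizes $\rho$ to be a Morse function with distinct, ``nice'' critical values (via \cite[Proposition 6.13]{GG} and \cite[Lemma 3.10.3]{Forstneric}) and then devotes a separate \emph{critical extension case} to crossing each critical level: near a critical point one replaces $\rho$ by an auxiliary strongly psh function $\tau$ on a Runge neighborhood $\Omega=\{\rho<3c_0\}$ (so that $\cO(\Omega)$-convex sets remain $\cO(S)$-convex), and uses \cite[Lemma 3.11.4]{Forstneric} to pass the critical point by a single bump followed by noncritical bumping for $\tau$. To repair your argument you would need to add this second case; as written, your induction step fails at every critical value of $\rho$.
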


\begin{proof}
Let $\rho:S\to \R$ be a $\ci$ strongly psh exhaustion function (see \cite[Lemma~p.~358]{NarasimhanLeviI}).  By \cite[Proposition 6.13]{GG} and \cite[Lemma 3.10.3]{Forstneric}, we may assume that its critical points are nondegenerate,  have distinct images by~$\rho$, and 
are nice in the sense of \cite[Definition 3.10.2]{Forstneric}.

We adopt the following nonstandard terminology. We say that a pair $(A,A')$ of $\cO(S)$-convex compact subsets of $S$ is a \textit{bump} if there exists an $\cO(S)$-convex compact subset $B\subset S$ with a basis of contractible neighborhoods with $A'\subset A\cup B$. The proof has two parts, which, combined,  prove the proposition.

 In the first part (the \textit{noncritical extension case}), we show that if ${c<c'}$ are such that $\rho$ has no critical values in $[c,c']$, then one can go from $A:=\{\rho\leq c\}$ to $A':=\{\rho\leq c'\}$ by a sequence of bumps containing $A$. This follows at once from \cite[Proof of Lemma 5.10.3]{Forstneric}. 
Indeed, it is explained there how to go from $A$ to $A'$ by an increasing sequence of sublevel sets of (varying)~$\ci$ psh exhaustion functions (which are $\cO(S)$-convex by Lemma \ref{critpsh} (ii)),
 in a way that the difference between two successive ones is included in an open subset of any fixed open covering of~$A'$. In view of Lemma \ref{coveringballs}, one can ensure that this difference is included in an $\cO(S)$\nobreakdash-convex compact subset with a basis of contractible neighborhoods.  

In the second part (the \textit{critical extension case}), we fix a critical value of $\rho$ (assumed without loss of generality to be equal to $0$),  with associated critical point $p_0\in S$. We show that there exist $c<0<c'$ such that one can go from $A:=\{\rho\leq c\}$ to $A':=\{\rho\leq c'\}$ by a sequence of bumps containing $A$. 
To do so, use Lemma \ref{coveringballs} to choose a compact $\cO(S)$-convex neighborhood of $p_0$ in~$S$ with a basis of contractible open neighborhoods.  Apply the constructions of \cite[p.~102]{Forstneric} with $q=1$ and  $U\subset B$ an open neighborhood of $p_0$. In particular, choose~$c_0>0$ small enough so that \cite[Lemma 3.11.4]{Forstneric} applies, and let $0<t_0<t_1$ and $\tau:\{\rho< 3c_0\}\to\R$  be as in \loccit Set $c:=-t_0$ and $c':=c_0$.  One can go from $A$ to $\{\tau\leq t_1-t_0\}$ by a bump by \cite[Lemma 3.11.4 (c)]{Forstneric}.  Arguing as in the noncritical extension case (using the~$\ci$ strongly psh function~$\tau$ on $\Omega:=\{\rho<3c_0\}$, which is legitimate by \cite[Proposition 3.11.4 (d)]{Forstneric}), one sees that one can go from $\{\tau\leq t_1-t_0\}$ to $\{\tau\leq 2c_0 \}$ by a sequence of bumps (the involved compact $\cO(\Omega)$-convex sets are also $\cO(S)$-convex because $\Omega$ is Runge in $S$ by Lemma \ref{critpsh} (i)).  Finally, one can go from $\{\tau\leq 2c_0\}$ to $A'$ by a bump by \cite[Lemma~3.11.4~(b)]{Forstneric}.
\end{proof}

\subsection{Induction on the cohomological degree}
\label{paru3}

We finally reach our goal.

\begin{prop}
\label{killunram}
Let $S$ be a Stein space, let $K\subset S$ be an $\cO(S)$-convex compact subset, 
and let $U\subset S$ be an open neighborhood of $K$. Fix integers $k, m\geq 1$. For all $\alpha\in H^k(U,\Z/m)$, there exists a finite surjective holomorphic map $p:T\to S$ and an open neighborhood $V$ of $K$ in $U$ such that $\alpha|_{p^{-1}(V)}=0$ in $H^k(p^{-1}(V),\Z/m)$.
\end{prop}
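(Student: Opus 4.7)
The plan is to argue by induction on $k \geq 1$, using Proposition~\ref{finitecompact}(ii) for the base case and combining Grauert's bump method (Proposition~\ref{bump}) with Mayer--Vietoris arguments for the inductive step. As a preliminary reduction, Lemma~\ref{lemretract} allows us to assume $S$ is a Stein manifold by embedding it in an ambient Stein manifold of which $S$ is a strong deformation retract (taking a suitable $\cO$-convex compact neighborhood of $K$ in the ambient and extending $\alpha$ through the retraction). For the base case $k = 1$, the class $\alpha$ corresponds to a $\Z/m$-torsor $f : \wU \to U$, which is a finite surjective local biholomorphism, and Proposition~\ref{finitecompact}(ii) produces, after shrinking $U$ to some open neighborhood $V$ of $K$, a finite surjective $p : T \to S$ and a holomorphic $g : p^{-1}(V) \to \wU$ with $f \circ g = p|_{p^{-1}(V)}$; then $g$ is a section of the pulled-back torsor, so $p^* \alpha = 0$ on $p^{-1}(V)$.

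For the inductive step ($k \geq 2$, assuming the result in degree $k - 1$), I invoke Proposition~\ref{bump} to obtain sequences $(A_i)_{i \geq 0}$ and $(B_i)_{i \geq 0}$ of $\cO(S)$-convex compact subsets of $S$, with $B_i$ admitting a basis of contractible open neighborhoods, $A_{i+1} \subset A_i \cup B_i$, and $K \subset A_{i_0}$ for some $i_0$. By a finite inner induction on $i$, I construct a finite surjective holomorphic map $p_i : T_i \to S$ (with $T_i$ Stein, being finite over $S$) and an open neighborhood $V_i \subset U$ of $A_i$ with $p_i^* \alpha = 0$ on $p_i^{-1}(V_i)$. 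To pass from step $i$ to $i+1$, choose a contractible open neighborhood $U_i$ of $B_i$ inside $U$. Since $U_i$ is contractible and $k \geq 1$, $\alpha|_{U_i} = 0$, so Mayer--Vietoris applied to the open cover $(p_i^{-1}(V_i), p_i^{-1}(U_i))$ of $p_i^{-1}(V_i \cup U_i)$ expresses $p_i^* \alpha|_{p_i^{-1}(V_i \cup U_i)}$ as $\delta \beta$ for some $\beta \in H^{k-1}(p_i^{-1}(V_i \cap U_i), \Z/m)$. The key point is that $A_i \cap B_i$ is $\cO(S)$-convex (as an intersection of such), so $L := p_i^{-1}(A_i \cap B_i)$ is $\cO(T_i)$-convex by Proposition~\ref{propfinitehull}(i); the inductive hypothesis in degree $k - 1$ applied to $(T_i, L, p_i^{-1}(V_i \cap U_i), \beta)$ yields a further finite surjective $r : T' \to T_i$ and an open neighborhood $V''$ of $L$ in $p_i^{-1}(V_i \cap U_i)$ with $r^* \beta = 0$ on $r^{-1}(V'')$. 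Setting $p_{i+1} := p_i \circ r$, a second Mayer--Vietoris will conclude the step once an appropriate $V_{i+1}$ is constructed.

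The hard part will be the topological bookkeeping to produce $V_{i+1}$. Define $V''' := \{s \in V_i \cap U_i : p_i^{-1}(s) \subset V''\}$, which is open in $V_i \cap U_i$ by properness of $p_i$ and contains $A_i \cap B_i$. Applying Mayer--Vietoris to the cover $(p_{i+1}^{-1}(V_i \cap V_{i+1}), p_{i+1}^{-1}(U_i \cap V_{i+1}))$ of $p_{i+1}^{-1}(V_{i+1})$ reduces the problem to producing an open $V_{i+1} \subset V_i \cup U_i$ with $V_{i+1} \supset A_{i+1}$ and $V_i \cap U_i \cap V_{i+1} \subset V'''$. To enable a local construction, I first shrink $U_i$ within the basis of contractible neighborhoods of $B_i$, and $V_i$ (keeping $A_i \subset V_i$), so that $A_i \cap \overline{U_i} \subset V'''$ and $B_i \cap \overline{V_i} \subset V'''$; this is possible because $A_i \setminus V'''$ and $B_i \setminus V'''$ are compact and respectively disjoint from $B_i$ and $A_i$ (using $A_i \cap B_i \subset V'''$). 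After this shrinking, $A_{i+1} \cap V_i \cap U_i \subset (A_i \cap U_i) \cup (B_i \cap V_i) \subset V'''$, and a case analysis around each $x \in A_{i+1}$ (whether $x$ lies in $V_i \cap U_i$, in $V_i \setminus \overline{U_i}$, or in $U_i \setminus \overline{V_i}$) supplies open neighborhoods $W_x$ satisfying the required inclusions; their union (intersected with $U$) is the desired $V_{i+1}$.
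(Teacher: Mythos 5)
Your overall strategy is exactly the paper's: induction on $k$, reduction to a Stein manifold via Lemma~\ref{lemretract}, Proposition~\ref{finitecompact}(ii) for $k=1$, and for $k\geq 2$ an inner induction along a bump decomposition combined with Mayer--Vietoris and Proposition~\ref{propfinitehull}(i). The final "bookkeeping" paragraph is a (more laborious) version of the standard separation argument the paper uses to choose $V_i'\supset A_i$ and $W_i'\supset B_i$ with $p_i^{-1}(V_i'\cap W_i')$ inside the given neighborhood of $p_i^{-1}(A_i\cap B_i)$; that part is fine.

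There is, however, a genuine gap in the setup of the inductive step: you apply Proposition~\ref{bump} to $S$ itself. The resulting compacts $A_i$, $B_i$ are built from sublevel sets of an exhaustion function of $S$ and have nothing to do with $U$; for $i\leq i_0$ they need not be contained in $U$ (e.g.\ $A_1$ sits near the minimum of the exhaustion function, which may be far from $K$). Consequently the inner induction statement --- "an open neighborhood $V_i\subset U$ of $A_i$" and "a contractible open neighborhood $U_i$ of $B_i$ inside $U$" --- is not well posed, and $\alpha$, which only lives on $U$, cannot even be restricted to these sets. The repair is not just a rewording: one must first shrink $U$ to a \emph{Runge} open neighborhood of $K$ in $S$ (Lemma~\ref{lemRunge}) and then run Proposition~\ref{bump} on the Stein manifold $U$. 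The Runge property is what guarantees that the resulting $\cO(U)$-convex compacts $A_i\cap B_i$ are still $\cO(S)$-convex (Proposition~\ref{OkaWeilnonreduit}(i)); this is needed so that $p_i^{-1}(A_i\cap B_i)$ is $\cO(T_i)$-convex for the \emph{global} finite covering $p_i:T_i\to S$ via Proposition~\ref{propfinitehull}(i), which in turn is what lets you apply the degree-$(k-1)$ hypothesis on all of $T_i$ and keep every covering finite and surjective over $S$, as the statement requires. A smaller remark: in the reduction via Lemma~\ref{lemretract} you should first pass to an irreducible component of $S^{\red}$ so that $S$ is finite-dimensional, and it suffices to replace $K$ by $i(K)$ directly (it is $\cO(S')$-convex by Proposition~\ref{propfinitehull}(ii)); taking an $\cO$-convex compact \emph{neighborhood} inside $r^{-1}(U)$ is both unnecessary and not immediately supplied by the quoted lemmas.
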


In the statement of Proposition \ref{killunram}, we use the notation $\alpha|_{p^{-1}(V)}$ to denote the pull-back of $\alpha$ by the map $p|_{p^{-1}(V)}:p^{-1}(V)\to U$.

\begin{proof}
We argue by induction on $k\geq 1$, which we fix.
Using Proposition \ref{propfinitehull} (i), we may replace $S$ with an irreducible component of its reduction, and assume that~$S$ is irreducible,  hence of finite dimension $n$.  We further reduce to the case where~$S$ is a manifold as follows. Let $i:S\to S'$ be as in Lemma \ref{lemretract}, fix a continuous retraction $r:S'\to i(S)$ and replace $S$, $K$, $U$ and $\alpha$ by $S'$, $i(K)$, $r^{-1}(U)$ and~$r^*\alpha$, noting that~$i(K)$ is $\cO(i(S))$-convex by Proposition \ref{propfinitehull} (ii), hence $\cO(S')$-convex.  This is legitimate because, if the proposition is proved for $r^*\alpha$ using a finite surjective map $p':T'\to S'$, then it is also proved for $i^*r^*\alpha=\alpha$ using the finite surjective map~$p:T'\times_{S'}S\to S$ obtained from $p'$ by base change by $i:S\to S'$.

Assume first that $k=1$. Let $f:\widetilde{U}\to U$ be the degree $m$ unramified cyclic covering associated with $\alpha$. After maybe shrinking $U$, choose $p:T\to S$ and $g:p^{-1}(U)\to\wU$ with $f\circ g=p|_{p^{-1}(U)}$ as in Proposition \ref{finitecompact} (ii). As $f^*\alpha=0$ by choice of $f$, one has $\alpha|_{p^{-1}(U)}=g^*f^*\alpha=0$, as wanted.

Assume now that $k\geq 2$.  After shrinking $U$, we may assume that it is a Runge domain in $S$ (see Lemma \ref{lemRunge}). Let $(A_0,A_1,\dots)$ and $(B_0,B_1,\dots)$ be as in Proposition~\ref{bump} applied to the Stein manifold $U$.  As $U$ is Runge in $X$, it follows from Proposition~\ref{bump} (i) that the $A_i$ and the $B_i$ are $\cO(S)$-convex compact subsets of $S$. 

We will now show by induction on $i\geq 0$ that there exists an open neighborhood~$V_i$ of~$A_i$ in $U$ and a finite surjective holomorphic map $p_i:T_i\to S$ such that $\alpha|_{p_i^{-1}(V_i)}=0$ in $H^k(p_i^{-1}(V_i),\Z/m)$.  When $i=0$, one can take $V_0=\varnothing$. 

Assume that we have proven the statement for $i$ and let us prove it for $i+1$. Let~$W_i$ be a contractible open neighborhood of $B_i$ in $U$ (see Proposition \ref{bump} (ii)).  Consider the boundary map 
$$\partial: H^{k-1}(p_i^{-1}(V_i\cap W_i),\Z/m)\to H^{k}(p_i^{-1}(V_i\cup W_i),\Z/m)$$
in the Mayer--Vietoris exact sequence.  As $\alpha|_{p_i^{-1}(V_i)}=0$ by the induction hypothesis on $i$ and $\alpha|_{p_i^{-1}(W_i)}=(p_i|_{p_i^{-1}(W_i)})^*(\alpha|_{W_i})=0$ because $W_i$ is contractible, there exists $\beta\in H^{k-1}(p_i^{-1}(V_i\cap W_i),\Z/m)$ such that $\partial(\beta)=\alpha|_{p_i^{-1}(V_i\cup W_i)}$. By the induction hypothesis on the cohomological degree (applied in degree $k-1$ to the $\cO(T_i)$\nobreakdash-convex compact subset $p_i^{-1}(A_i\cap B_i)$ of $T_i$, see Proposition \ref{propfinitehull} (i)),  there exist a finite surjective holomorphic map $q_i:T_{i+1}\to T_{i}$ and 
an open neighborhood $\Omega_i$ of~${p_i^{-1}(A_i\cap B_i)}$ in~$p_i^{-1}(V_i\cap W_i)$ such that $\beta|_{q_i^{-1}(\Omega_i)}=0$. Let $V_i'$ and~$W_i'$ be open neighborhoods of~$A_i$ and $B_i$ in~$V_i$ and~$W_i$ respectively, such that $p_i^{-1}(V'_i\cap W'_i)\subset\Omega_i$.  Set $p_{i+1}:=p_i\circ q_i$ and $V_{i+1}:=V_i'\cup W_i'$. Then $\alpha|_{p_{i+1}^{-1}(V_{i+1})}$ is the image of $\beta|_{p_{i+1}^{-1}(V'_i\cap W'_i)}$ by the boundary map
$$\partial': H^{k-1}(p_{i+1}^{-1}(V'_i\cap W'_i),\Z/m)\to H^{k}(p_{i+1}^{-1}(V'_i\cup W'_i),\Z/m)$$
of the Mayer--Vietoris exact sequence, by compatibility of $\partial$ and $\partial '$.  As $\beta|_{p_{i+1}^{-1}(V'_i\cap W'_i)}$ vanishes (because $\beta|_{q_i^{-1}(\Omega_i)}=0$), it follows that $\alpha|_{p_{i+1}^{-1}(V_{i+1})}=0$, as required.

In view of Proposition \ref{bump} (iv),  one can take $V=V_i$ and $p=p_i$ for $i\gg0$. 
\end{proof}

\section{A relative comparison theorem}
\label{parcomprel}

After studying analytifications of algebraic varieties over Stein algebras in~\S\ref{parBingener}, and \'etale sheaves on them in \S\ref{paretale}, we prove the relative Artin comparison theorem in Stein geometry in \S\ref{parrelArtin} (Theorem~\ref{relArtin}).

\subsection{Analytification of algebraic varieties over Stein algebras}
\label{parBingener}


Fix a Stein space $S$. Beware that the ring $\cO(S)$ is in general not noetherian.
Let~$(P)$ be a property of morphisms of schemes. A morphism $f:X\to Y$ of $\cO(S)$\nobreakdash-schemes is said to \textit{interiorly} satisfy~$(P)$ if there exists an open covering $(U_i)_{i\in I}$ of~$S$ such that the morphisms $f_{\cO(U_i)}:X_{\cO(U_i)}\to Y_{\cO(U_i)}$ satisfy $(P)$. We say that an $\cO(S)$-scheme~$X$ interiorly satisfies $(P)$ if so does the structural morphism $X\to \Spec(\cO(S))$.

\begin{lem}
\label{int}
Let $(P)$ be a property of morphisms of schemes that is stable by base change and fpqc local on the base.
Let $S$ be a Stein space and let $f:X\to Y$ be a morphism of $\cO(S)$-schemes that interiorly satisfies $(P)$.
\begin{enumerate}[(i)]
\item For $K\subset S$ compact, $f_{\cO(K)}:X_{\cO(K)}\to Y_{\cO(K)}$ satisfies $(P)$. 
\item For $K\subset S$ compact, $f_{\cO(S)_K}:X_{\cO(S)_K}\to Y_{\cO(S)_K}$ satisfies $(P)$. 
\item For $U\subset S$ open and relatively compact, $f_{\cO(U)}:X_{\cO(U)}\to Y_{\cO(U)}$ satisfies $(P)$. 
\end{enumerate}
\end{lem}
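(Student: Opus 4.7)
The plan is to prove (iii) directly by fpqc descent along a Stein open cover, and to deduce (i) and (ii) from (iii). For the reduction of (i) to (iii), note that any compact $K\subset S$ has a relatively compact open neighborhood $U\subset S$ (complex spaces are locally compact; alternatively, take the interior of an $\cO(S)$-convex compact neighborhood of $K$ from Lemma~\ref{semianal}~(i)). Restriction of germs gives a ring morphism $\cO(U)\to\cO(K)$, so once (iii) ensures that $f_{\cO(U)}$ satisfies~$(P)$, the base-change stability of $(P)$ yields the same for $f_{\cO(K)}$. For the reduction of (ii) to (i), Lemma~\ref{semianal}~(i) provides an $\cO(S)$-convex compact neighborhood $\tilde K$ of $K$; since $\cS_{\tilde K}\subset\cS_K$, the map $\cO(S)_{\tilde K}\to\cO(S)_K$ is a localization, hence flat, and base change reduces the problem to the case where $K$ is already $\cO(S)$-convex. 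In that case, Lemma~\ref{excellent}~(i) gives the faithful flatness of $\cO(S)_K\to\cO(K)$, so combining (i) applied to $K$ with fpqc descent yields~$(P)$ for $f_{\cO(S)_K}$.

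To prove (iii), fix a relatively compact open $U\subset S$. Compactness of $\overline U$ combined with the interiority hypothesis produces finitely many indices $i_1,\dots,i_n$ with $\overline U\subset U_{i_1}\cup\cdots\cup U_{i_n}$; refining the cover if needed, we may assume each $U_{i_j}$ is Stein. Set $V_j:=U\cap U_{i_j}$, which is Stein as an intersection of Stein opens; these form a finite Stein open cover of $U$. By base change from $f_{\cO(U_{i_j})}$, each $f_{\cO(V_j)}$ satisfies~$(P)$. The goal reduces to showing that $\cO(U)\to\prod_j\cO(V_j)$ is faithfully flat, after which fpqc descent for~$(P)$ transfers~$(P)$ from the $f_{\cO(V_j)}$ to $f_{\cO(U)}$.

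The main obstacle is this last faithful flatness. Flatness of each restriction $\cO(U)\to\cO(V_j)$ for a Runge Stein open inclusion $V_j\subset U$ (a condition one can arrange by further refining the original cover) invokes a classical theorem in the spirit of Frisch and Bingener on flatness of sections along Runge restrictions, while faithful flatness should reduce to the statement that every prime of $\cO(U)$ lies over a prime coming from some $\cO(V_j)$, via the description of $\Spec\,\cO(U)$ through germs of analytic subsets (à la Zame, see Lemma~\ref{regularlemma} and the discussion preceding it) together with the covering condition $U=\bigcup_j V_j$.
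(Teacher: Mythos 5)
Your reductions among the three statements are fine and partly mirror the paper's (the step $\cO(S)_{\tilde K}\to\cO(S)_K$ being a localization, then Lemma~\ref{excellent}~(i) plus descent for (ii), is exactly what the paper does), but you have inverted the logical order in a way that concentrates all the difficulty in the one place where the available tools do not apply. The paper proves (i) \emph{first}, by covering the ($\cO(S)$-convex, hence Stein) compact $K$ with finitely many Stein compact subsets $K_i\subset K$ on which $(P)$ holds by base change, and then descending along $\cO(K)\to\prod_i\cO(K_i)$. Both ingredients of faithful flatness are then genuinely available: flatness of $\cO(K)\to\cO(K_i)$ for nested Stein compacta is Lemma~\ref{flat}, and surjectivity on spectra follows from Zame's description of the maximal ideals of the ring of germs $\cO(K)$ as exactly the ideals $\km_s$, $s\in K$. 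Statement (iii) is then a one-line consequence of (i) applied to $\overline U$, via the restriction $\cO(\overline U)\to\cO(U)$ and base change.

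Your plan instead requires faithful flatness of $\cO(U)\to\prod_j\cO(V_j)$ for a finite Stein \emph{open} cover $V_j=U\cap U_{i_j}$ of $U$, and both halves of this are unjustified. For flatness: Lemma~\ref{flat} and the Frisch--Bingener--Kucharz results concern rings of germs on Stein \emph{compacta}; the Cartan~A/B argument behind them produces, from a coherent sheaf of relations on $U$, finitely many global generators only over a compact subset, which suffices when the target is $\cO(K)$ or $\cO(V)$ with $\overline V\subset U$, but your $V_j$ meet $\partial U$, so no finite set of global relations generates the relation sheaf on all of $V_j$ and the argument breaks down. There is no ``classical theorem'' giving flatness of $\cO(U)\to\cO(V)$ for an arbitrary (even Runge) Stein open $V\subset U$, and it is moreover unclear how to refine the cover so that each $V_j$ is Runge in $U$ while still covering $U$ (intersections of Runge domains need not be Runge). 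For surjectivity on spectra: Zame's description applies to $\Spec\cO(K)$ for $K$ a Stein compactum, not to $\Spec\cO(U)$. The latter contains maximal ideals not attached to any point of $U$ (e.g.\ a maximal ideal containing all functions vanishing at all but finitely many terms of a discrete sequence escaping to $\partial U$), so the set-theoretic covering $U=\bigcup_j V_j$ does not give surjectivity of $\coprod_j\Spec\cO(V_j)\to\Spec\cO(U)$, and your ``should reduce to'' conceals precisely the point that fails. The fix is to run the descent on Stein compact neighborhoods, as the paper does, and derive (iii) afterwards.
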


\begin{proof}
To prove (i) and (ii), we may replace $K$ with $\whK_{\cO(S)}$ (by base change), and consequently assume that $K$ is $\cO(S)$-convex, hence Stein.  
Any $s\in S$ has a basis of Stein compact neighborhoods (see Lemma \ref{semianal} (ii)). By hypothesis on~$f$ and the base change property,  each $s\in K$ has a Stein compact neighborhood $K_s$ such that $f_{\cO(K_s)}$ satisfies $(P)$.  By compactness of~$K$, we may extract a finite family $(K_i)_{1\leq i\leq k}$ covering $K$. Replacing~$K_i$ with $K_i\cap K$ (and using the base change property again), we may assume that $K_i\subset K$ for $1\leq i\leq k$. The morphism $\cO(K)\to\prod_{i=1}^k\cO(K_i)$ is flat by Lemma~\ref{flat}, hence faithfully flat by \cite[Lemma \href{https://stacks.math.columbia.edu/tag/00HQ}{00HQ}]{SP} and the description of the maximal ideals of $\cO(K)$ given in \cite[Corollary 3.3]{Zame}. Assertion~(i) follows since~$(P)$ is fpqc local on the base. So does (ii) by Lemma~\ref{excellent}~(i).
Assertion (iii) follows from (i) applied with $K=\overline{U}$ (by base change).  
\end{proof}

An $\cO(S)$-scheme interiorly locally of finite type is interiorly locally of finite presentation (use Lemmas \ref{semianal} (ii) and \ref{int} (i)), 
\ie belongs to the category~$\mathbb{K}$ in the terminology of Bingener \cite[p.~23]{Bingener}.
With such an $\cO(S)$\nobreakdash-scheme $X$,  Bingener associates its analytification $X^{\an}$ (see \cite[Satz 1.1]{Bingener} when $X$ is of finite presentation over~$\cO(S)$; the construction and its properties extend under our more general hypotheses as indicated in \cite[p.~23]{Bingener} as one sees by applying them to the $\cO(U)$-schemes $X_{\cO(U)}$ for relatively compact Stein open subsets $U\subset S$). 

The analytification $X^{\an}$ of $X$ is a (possibly nonseparated)
complex space over~$S$ endowed with a map ${i_X:X^{\an}\to X}$ of locally ringed spaces, characterized by the fact that the map
\begin{alignat*}{4}
\Hom_S(T,X^{\an})&\to \Hom_{\cO(S)}(T,X)\\
h\hspace{1em}&\mapsto \hspace{1em} i_X\circ h.
\end{alignat*}
is bijective for all complex spaces $T$ over $S$.
In particular, with each point $x\in X^{\an}$ corresponds a closed point of $X$ with complex residue field, also denoted by $x$.

When $X$ is an affine $\cO(S)$-scheme of finite presentation, with coordinate ring $\cO(S)[x_1,\dots,x_N]/\langle f_1,\dots, f_M\rangle$, the analytification $X^{\an}$ of $X$ can be described concretely as the zero locus in $S\times \C^N$ of $f_1,\dots,f_M\in\cO(S\times\C^N)$.
The analytification of an $\cO(S)$-scheme locally of finite presentation can then be constructed by gluing the analytifications of affine open subsets covering $X$.  In the general interiorly locally of finite type case, one glues the analytifications of $\cO(U_i)$-schemes of the form $X_{\cO(U_i)}$, where 
$(U_i)_{i\in I}$ is a cover of $S$ by relatively compact Stein open subsets.
Concrete examples of analytifications  include $\Spec(\cO(S))^{\an}=S$, $(\A^N_{\cO(S)})^{\an}=S\times \C^N$ and $(\P^N_{\cO(S)})^{\an}=S\times \P^N(\C)$.

This construction is functorial \cite[p.~2]{Bingener} (and we let $f^{\an}:X^{\an}\to Y^{\an}$ denote the holomorphic map induced by a morphism $f:X\to Y$ of $\cO(S)$-schemes interiorly locally of finite type), commutes with the formation of fiber products \cite[p.~3]{Bingener}, and is compatible with change of the base Stein space \cite[(1.2)]{Bingener}. The next proposition is a consequence of \cite[Satz 3.1]{Bingener}. 

\begin{prop}
\label{permanencean}
Let $S$ be a Stein space. Let $f:X\to Y$ be a morphism interiorly of finite type between $\cO(S)$-schemes interiorly locally of finite type.
If $f$ is interiorly surjective (\resp separated, proper, finite, flat, \'etale, a closed embedding, an open embedding), then $f^{\an}$ is surjective (\resp separated, proper, finite, flat, a local biholomorphism, a closed embedding, an open emebdding).
\end{prop}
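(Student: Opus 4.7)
The plan is to reduce the proposition to Bingener's Satz 3.1, which establishes exactly these permanence properties for morphisms of $\cO(S)$-schemes locally of finite presentation (rather than only interiorly so), and then to bootstrap from the ``absolute'' case to the ``interior'' case by working locally on $S$. First, I would unwind the definition of ``interiorly'' to produce an open covering $(U_i)_{i\in I}$ of $S$ by relatively compact Stein open subsets (whose existence is guaranteed by Lemma~\ref{semianal}~(ii) together with the second-countability of $S$) such that each base change $f_{\cO(U_i)}:X_{\cO(U_i)}\to Y_{\cO(U_i)}$ is of finite presentation and satisfies the property $(P)$ in question. By the compatibility of analytification with base change along $\cO(S)\to\cO(U_i)$ recorded in \cite[(1.2)]{Bingener}, the analytifications $(X_{\cO(U_i)})^{\an}$ and $(Y_{\cO(U_i)})^{\an}$ identify canonically with the preimages of $U_i$ in $X^{\an}$ and $Y^{\an}$ under the structural maps to $S$, and $(f_{\cO(U_i)})^{\an}$ identifies with the corresponding restriction of $f^{\an}$.

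Next, I would observe that each property under consideration---surjectivity, separatedness, properness, finiteness, flatness, being a local biholomorphism, a closed embedding, or an open embedding---can be checked locally on $Y^{\an}$, and in particular can be verified after restricting to the preimage of any open cover of $S$ in $Y^{\an}$. Since $Y^{\an}$ is covered by the open subsets $(Y_{\cO(U_i)})^{\an}$, it suffices to verify the analytic analogue of $(P)$ for each restriction $(f_{\cO(U_i)})^{\an}$. This reduces the statement to the setting where $S$ is replaced by some $U_i$, and $f$ is actually (not merely interiorly) of finite presentation between $\cO(S)$-schemes locally of finite presentation.

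In this reduced setting, the eight required implications are exactly the content of Bingener's Satz~3.1, and one invokes that theorem directly. The identification of the analytic counterpart of ``\'etale'' with ``local biholomorphism''---the only assertion where the analytic property is not named identically to its algebraic source---is part of the same Bingener statement.

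I do not anticipate a serious obstacle: the argument is essentially a bookkeeping reduction. The only mildly delicate point is checking that each of the listed properties is indeed local on the codomain $Y^{\an}$ in the analytic category, so that verification on the cover by the $(Y_{\cO(U_i)})^{\an}$ implies the property globally; this is standard for all eight cases.
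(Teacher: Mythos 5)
Your proposal is correct and matches the paper's approach: the paper gives no written proof beyond the remark that the proposition ``is a consequence of \cite[Satz 3.1]{Bingener}'', and the localization-on-$S$ bookkeeping you spell out (using the compatibility of analytification with base change and the locality of each property on the target) is exactly the implicit reduction.
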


We also state the following proposition for later reference.

\begin{prop}
\label{extan}
Let $S$ be a Stein space.  Fix $s\in S$. Let $f:X\to Y$ be a morphism interiorly of finite type between $\cO(S)$-schemes interiorly locally of finite type.
\begin{enumerate}[(i)]
\item If $f_{\cO_{S,s}}:X_{\cO_{S,s}}\to Y_{\cO_{S,s}}$ has dense image, there exists a Stein neighborhood $U\subset S$ of $s$ such that $(f_{\cO(U)})^{\an}:(X_{\cO(U)})^{\an}\to (Y_{\cO(U)})^{\an}$ has dense image.
\item If $f_{\cO_{S,s}}$ is a codimension $c$ closed embedding of regular schemes, there exists a Stein neighborhood $U\subset S$ of $s$ such that $(f_{\cO(U)})^{\an}$ is a codimension $c$ closed embedding of complex manifolds.
\end{enumerate}
\end{prop}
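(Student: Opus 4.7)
The plan is to spread each hypothesized property from $\cO_{S,s}$ down to $\cO(U)$ for a sufficiently small Stein neighborhood $U$ of $s$ via standard limit arguments for finitely presented morphisms, then to transfer the resulting algebraic property to the analytification using Proposition~\ref{permanencean} and the comparison of algebraic and analytic local rings at $\C$-points. Using Lemma~\ref{int} and the interior hypothesis, I first fix a relatively compact Stein open $U_0 \ni s$ over which $X_{\cO(U_0)}$, $Y_{\cO(U_0)}$, $f_{\cO(U_0)}$ are of finite presentation; then $\cO_{S,s}$ is the filtered colimit of $\cO(U)$ over Stein $U \subset U_0$ containing $s$, and $f_{\cO_{S,s}} = \lim_U f_{\cO(U)}$.

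For (i), spreading-out applied to the finitely presented $f_{\cO(U_0)}$ yields a Stein $U_1 \subset U_0$ on which $f_{\cO(U_1)}$ has dense image. Replacing $U_1$ with the interior of an excellent Stein compact neighborhood $L \subset U_1$ of $s$ (Lemmas~\ref{semianal}, \ref{excellent}), Chevalley's theorem on the noetherian $Y_{\cO(L)}$ gives a dense open $V \subset Y_{\cO(L)}$ contained in the image of $f_{\cO(L)}$. The complement $Y_{\cO(L)} \setminus V$ is a proper closed subscheme whose analytification is a proper closed analytic subset of $(Y_{\cO(L)})^{\an}$ (of strictly smaller dimension in each component), hence nowhere dense; so $V^{\an} \subset (Y_{\cO(L)})^{\an}$ is dense, and since it is contained in the image of $(f_{\cO(L)})^{\an}$, the image is dense.

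For (ii), spreading-out of ``closed immersion'' provides a Stein $U_1 \subset U_0$ with $f_{\cO(U_1)}$ a closed immersion of finite presentation; Proposition~\ref{permanencean} then makes $(f_{\cO(U_1)})^{\an}$ a closed embedding of complex spaces. Fix an excellent Stein compact $L \subset U_1$ around $s$, so $\cO(L)\to\cO_{S,s}$ is flat and regular by Lemma~\ref{regular}. Let $\Sigma \subset Y_{\cO(L)}$ be the non-regular locus (a closed subscheme by excellence) and $C := \pi(\Sigma) \subset \Spec(\cO(L))$ its image under $\pi: Y_{\cO(L)} \to \Spec(\cO(L))$, writing $\kp_s$ for the maximal ideal of $\cO(L)$ of functions vanishing at~$s$. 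I claim that $\kp_s \notin \overline{C}$: any prime $\eta \subset \kp_s$ lifts by going-down for the flat $\cO(L)\to\cO_{S,s}$ to some $\widetilde\eta \in \Spec(\cO_{S,s})$, so for $y \in \Sigma$ over $\eta$ and a lift $\widetilde y \in Y_{\cO_{S,s}}$ over $\widetilde\eta$ (which exists by flatness), the stalk morphism $\cO_{Y_{\cO(L)},y} \to \cO_{Y_{\cO_{S,s}},\widetilde y}$ is flat local with regular target (by hypothesis) and regular fiber (by regularity of $\cO(L)\to\cO_{S,s}$); hence $\cO_{Y_{\cO(L)},y}$ is regular by \cite[Lemma~\href{https://stacks.math.columbia.edu/tag/00OF}{00OF}]{SP}, contradicting $y \in \Sigma$. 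So $W := \Spec(\cO(L)) \setminus \overline{C}$ is a Zariski open containing $\kp_s$ with $Y_{\cO(L)}|_W$ regular. Taking a Stein $U \ni s$ contained in $W^{\an}$ (Lemma~\ref{semianal}), the equality of completions of algebraic and analytic local rings at $\C$-points upgrades algebraic regularity to regularity of the analytic local rings, so $(Y_{\cO(U)})^{\an}$ is a complex manifold. The same argument applied to $X_{\cO(L)}$ handles $X$, and the codimension $c$ is preserved under analytification.

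The main obstacle in (ii) is establishing $\kp_s \notin \overline{C}$, which requires the combined input of Lemma~\ref{regular}, going-down for flat maps, and the flat descent of regularity via \cite[Lemma~\href{https://stacks.math.columbia.edu/tag/00OF}{00OF}]{SP}.
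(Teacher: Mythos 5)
The paper's own proof is a direct citation of Bingener: assertion (i) is \cite[Satz 3.1 (6)]{Bingener} with $K=\{s\}$, and assertion (ii) follows from \cite[(2.7)]{Bingener} applied to the properties \cite[(2.4) (2) and (4)]{Bingener}. You are therefore reproving foundational comparison results from scratch, and your algebraic skeleton is largely right — in particular, in (ii) the descent of regularity from $Y_{\cO_{S,s}}$ to the points of $Y_{\cO(L)}$ lying over generalizations of $\kp_s$, via going-down, Lemma~\ref{regular} and \cite[Lemma \href{https://stacks.math.columbia.edu/tag/00OF}{00OF}]{SP}, is correct and is essentially Bingener's argument. But the proof has genuine gaps precisely at the points where algebraic information must be transferred to the analytification, which is the actual content of the proposition. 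In (ii), the two closing assertions are unproved: the ``equality of completions of algebraic and analytic local rings at $\C$-points'' is the relative comparison statement of \cite[\S 2]{Bingener} (it needs, e.g., that $\kp_u$ generates the maximal ideal of $\cO_{S,u}$ and flatness, as in Remark~\ref{remcd}); and ``the codimension $c$ is preserved'' does not follow from the hypothesis, which only controls the fiber over $\cO_{S,s}$ — the points of $(Y_{\cO(U)})^{\an}$ lie over $\kp_u$ for all $u\in U$, so you need a local-constancy argument for the codimension of a regular immersion (length of the local regular sequence cutting out $X$) together with a spreading-out of that regular sequence from $s$; this is exactly what \cite[(2.4) (4)]{Bingener} packages.

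The gap in (i) is more serious. The opening step, ``spreading-out yields a Stein $U_1$ on which $f_{\cO(U_1)}$ has dense image,'' is not one of the standard limit lemmas and is false as a general principle: density of a constructible image does not descend from the colimit $\cO_{S,s}=\colim_U\cO(U)$ to a finite level, because $\Spec(\cO(U))$ has minimal primes that are not seen by $\Spec(\cO_{S,s})$ (only the generalizations of $\kp_s$ survive in the limit), so the image can miss entire components of $Y_{\cO(U)}$ for every $U$. One must instead run Chevalley over the noetherian ring $\cO_{S,s}$, spread out the resulting dense open and its closed complement $Z$, and then prove that $Z^{\an}$ is nowhere dense in $(Y_{\cO(U)})^{\an}$ for small $U$. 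That last implication — a closed subscheme containing no generic point of $Y_{\cO_{S,s}}$ analytifies to a nowhere dense analytic subset near the fiber over $s$ — is exactly the crux, and your proof disposes of it with the parenthetical ``of strictly smaller dimension in each component.'' Justifying it requires the correspondence between the minimal primes of these (non-noetherian at level $U$) rings and the irreducible components of the analytification, i.e., again the local-ring comparison of \cite[\S 2--3]{Bingener} or a Zame-type argument as in Lemma~\ref{regularlemma}. Without these inputs the proof of (i) is circular: it assumes the analytic consequence it is meant to establish.
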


\begin{proof}
Assertion (i) is \cite[Satz 3.1 (6)]{Bingener} applied with $K=\{s\}$.

As for assertion (ii), it follows from \cite[(2.7)]{Bingener} (applied with $K=\{s\}$ and to the regularity property,  see \cite[(2.4) (2)]{Bingener}) that we may assume, after shrinking~$S$, that $X^{\an}$ and $Y^{\an}$ are manifolds.
After further shrinking $S$, one can also assume that $f^{\an}$ is a closed embedding (see \cite[Satz~3.1]{Bingener}).  The codimension assertion can then be ensured after shrinking~$S$ by applying \mbox{\cite[(2.7)]{Bingener}} with~$K=\{s\}$ to the property  \mbox{\cite[(2.4)~(4)]{Bingener}} of the ideal sheaf of $Y$ in~$X$.
\end{proof}

Lemmas \ref{openanal} and \ref{finiteanal} give examples of $\cO(S)$-schemes interiorly of finite type with interesting analytifications.

\begin{lem}
\label{openanal}
Let $Z$ be a closed analytic subspace of a Stein space $S$. Then there exists an open immersion $j:V\hookrightarrow \Spec(\cO(S))$ interiorly of finite type such that~$j^{\an}$  identifies with $S\setminus Z\hookrightarrow S$. 
If $Z$ is set-theoretically defined by the vanishing of finitely many elements of~$\cO(S)$, one may moreover choose $j$ to be quasi-compact.
\end{lem}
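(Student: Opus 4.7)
The plan is to construct $V$ as an explicit open subscheme of $\Spec(\cO(S))$ cut out by non-vanishing of certain global sections, then verify its properties via Cartan's Theorems~A and~B, together with the compatibility of analytification with base change recorded in Proposition~\ref{permanencean}.

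Concretely, I would let $J:=\{f\in\cO(S)\mid f|_Z=0\}=\cI_{Z^{\red}}(S)\subset\cO(S)$ and set
\[
V:=\bigcup_{f\in J}D(f)\subset\Spec(\cO(S)),
\]
with $j$ the associated open immersion. By Cartan's Theorem~A applied to the coherent ideal sheaf $\cI_{Z^{\red}}$, the elements of $J$ generate $\cI_{Z^{\red}}$ at every stalk, so the elements of $J$ all vanish at a point $s\in S$ if and only if $s\in Z$. Since $D(f)^{\an}=\{s\in S\mid f(s)\ne 0\}$, this will identify $V^{\an}\hookrightarrow S$ with $S\setminus Z\hookrightarrow S$ at the level of open subspaces of $S=\Spec(\cO(S))^{\an}$.

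The main step is to verify that $j$ is interiorly of finite type. Given a relatively compact Stein open subset $U\subset S$, I would combine Cartan's Theorem~A with the compactness of $\overline{U}$ to extract finitely many $f_1,\dots,f_k\in J$ whose restrictions generate $\cI_{Z^{\red}}$ as a sheaf on $U$. Cartan's Theorem~B on the Stein space $U$ then upgrades this to generation of the $\cO(U)$-module $\cI_{Z^{\red}}(U)$, so that for every $f\in J$ the restriction $f|_U$ lies in the ideal $(f_1|_U,\dots,f_k|_U)\subset\cO(U)$. A short prime ideal computation then yields
\[
V_{\cO(U)}=\bigcup_{i=1}^k D(f_i|_U)\subset\Spec(\cO(U)),
\]
a quasi-compact open subscheme of finite type over $\cO(U)$. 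As $S$ is covered by relatively compact Stein opens (e.g.\ sublevel sets of a strongly psh exhaustion function), the morphism $j$ is interiorly of finite type.

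For the final clause, if $Z$ is set-theoretically cut out by $g_1,\dots,g_n\in\cO(S)$, I would simply take $V:=D(g_1)\cup\dots\cup D(g_n)$; this is quasi-compact by construction, and by the same computation its analytification is $\bigcup_i\{s\in S\mid g_i(s)\ne 0\}=S\setminus Z$. I do not anticipate a serious obstacle: the only mildly delicate point is that the analytification of the possibly infinite union defining $V$ must be correctly computed, but this is handled by the interior finite type presentation just described, since analytification commutes with base change to $\cO(U)$ and with open immersions (Proposition~\ref{permanencean}), so $V^{\an}$ is determined locally on $S$ by the finite unions $(V_{\cO(U)})^{\an}=U\setminus Z$.
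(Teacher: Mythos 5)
Your proposal is correct and is essentially the paper's argument: the paper takes $V=\Spec(\cO(S))\setminus\Spec(\cO(S)/\cI_Z(S))$ (the same open set as your union of distinguished opens, with $\cI_Z$ in place of $\cI_{Z^{\red}}$) and obtains the finite-type property over each relatively compact Stein open $U$ from the finite generation of $\cI_Z(U)$, which is exactly the Cartan A/B argument you spell out (this is Lemma~\ref{bcStein} in the paper). The treatment of the quasi-compact case is likewise the same up to cosmetic differences.
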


\begin{proof}
Set $V:=\Spec(\cO(S))\setminus \Spec(\cO(S)/\cI_Z(S))$, where $\cI_Z$ be the ideal sheaf of $Z$ in $S$.
Let $U\subset S$ be a relatively compact Stein open subset. By Lemma \ref{bcStein} below, 
$$V_U:=V\times_{\Spec(\cO(S))}\Spec(\cO(U))=\Spec(\cO(U))\setminus \Spec(\cO(U)/\cI_Z(U))$$
and $\cI_Z(U)$ is finitely generated over $\cO(U)$.  We deduce that $V_U$ is an $\cO(U)$-scheme of finite type.
The description of $j^{\an}$ follows from the concrete construction of the analytification that is recalled above (see also \cite[Proof of Satz 1.1]{Bingener}).

If $Z$ is set-theoretically defined by the vanishing of finitely many elements of~$\cO(S)$, we apply the above construction after replacing $Z$ with the complex subspace defined by the vanishing of these equations. This ensures that $\cI_Z(S)$ is a finitely generated ideal of $\cO(S)$ and hence that $j$ is quasi-compact.
\end{proof}

\begin{lem}
\label{finiteanal}
If $p:T\to S$ is a finite holomorphic map of Stein spaces,  then the induced morphism $f:\Spec(\cO(T))\to\Spec(\cO(S))$ is interiorly finite and $f^{\an}=p$.
\end{lem}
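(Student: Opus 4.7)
The plan is to establish the two assertions of the lemma in turn: interior finiteness of $f$, and the identification $f^{\an}=p$.

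\textbf{Interior finiteness.} Since $p$ is finite, the pushforward $\cA:=p_*\cO_T$ is a coherent sheaf of $\cO_S$-algebras with $\cA(S)=\cO(T)$. I would cover $S$ by relatively compact Stein open subsets $U$ such that $\cA|_U$ is generated as an $\cO_U$-module by finitely many global sections $t_1,\dots,t_n\in\cO(T)$; such $U$ exist by Cartan's Theorem~A on $S$ together with the compactness of $\overline U$. Cartan's Theorem~B on $U$ then yields a surjection $\cO(U)^n\twoheadrightarrow \cA(U)=\cO(p^{-1}(U))$, so $\cO(p^{-1}(U))$ is a finitely generated $\cO(U)$-module. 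The remaining point is to identify $\cO(T)\otimes_{\cO(S)}\cO(U)$ with $\cO(p^{-1}(U))$, which I would obtain from the flat base change principle $\cF(S)\otimes_{\cO(S)}\cO(K)\cong\cF(K)$ for coherent $\cF$ on a Stein space $S$ and Stein compact $K\subset S$ (a consequence of Lemma \ref{flat} together with a finite presentation of $\cF$ on a Stein neighborhood of $K$ by global sections), by passing to the filtered colimit over an exhaustion of $U$ by Stein compacts.

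\textbf{The analytification equals $p$.} I would verify the universal property of the analytification directly: for any complex space $Y$ over $S$, the natural map
\[
\Hom_S(Y,T)\to\Hom_{\cO(S)\text{-}\mathrm{alg}}(\cO(T),\cO(Y)),\quad g\mapsto g^*,
\]
is bijective. The forward direction is pullback of holomorphic functions. To construct the inverse, given $\phi:\cO(T)\to\cO(Y)$ over $\cO(S)$, I would work on the cover $(U_\alpha)$ from the first part: the generators $t_1,\dots,t_n$ provide a closed embedding $\iota:p^{-1}(U_\alpha)\hookrightarrow U_\alpha\times\C^n$ via $t\mapsto(p(t),t_1(t),\dots,t_n(t))$, and the map $y\mapsto(p_Y(y),\phi(t_1)(y),\dots,\phi(t_n)(y))$ defines a holomorphic $g_\alpha:p_Y^{-1}(U_\alpha)\to U_\alpha\times\C^n$. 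Using the isomorphism $\cO(T)\otimes_{\cO(S)}\cO(U_\alpha)\cong\cO(p^{-1}(U_\alpha))$ from the first part, I can transport the defining relations of $\iota(p^{-1}(U_\alpha))$ through $\phi\otimes\mathrm{id}_{\cO(U_\alpha)}$ and check that $g_\alpha$ factors through $\iota$, producing a holomorphic $g_\alpha:p_Y^{-1}(U_\alpha)\to p^{-1}(U_\alpha)$. The local pieces glue by uniqueness to a global $g:Y\to T$ over $S$ with $g^*=\phi$.

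\textbf{Main obstacle.} The technical heart of the argument is the flat base change identification $\cO(T)\otimes_{\cO(S)}\cO(U)\cong\cO(p^{-1}(U))$ on the cover, which underpins both parts of the proof. Once this is in hand, interior finiteness is immediate, and the universal property verification reduces to a direct coordinate-wise calculation using the explicit embedding $\iota$.
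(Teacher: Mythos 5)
Your first part (interior finiteness and the identification $\cO(T)\otimes_{\cO(S)}\cO(U)\cong\cO(p^{-1}(U))$) is essentially the paper's argument: it is exactly Lemma~\ref{bcStein} applied to $\cF=p_*\cO_T$. One slip in your write-up: you propose to deduce the base change isomorphism for the open set $U$ from its analogue over Stein compacts ``by passing to the filtered colimit over an exhaustion of $U$ by Stein compacts'', but for an exhaustion $K_1\subset K_2\subset\cdots$ of $U$ the ring $\cO(U)$ is the \emph{inverse} limit of the rings $\cO(K_j)$, not a colimit, and tensor products do not commute with inverse limits. The clean route is the direct Cartan A/B argument on the relatively compact open $U$ itself, as in Lemma~\ref{bcStein}.

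For $f^{\an}=p$ you take a genuinely different route, and this is where there is a real gap. The paper notes the claim is local on $S$, reduces to $S$ finite-dimensional, and invokes Bingener's compatibility \cite[(1.3)]{Bingener} of analytification with change of the base Stein space: the analytification of $\Spec(\cO(T))$ as an $\cO(T)$-scheme is $T$ essentially by construction, and \cite[(1.3)]{Bingener} says the same holds when it is viewed as an $\cO(S)$-scheme. Your direct verification of the universal property is a legitimate alternative in principle, but the decisive step is asserted rather than proved: to see that $g_\alpha$ factors through $\iota(p^{-1}(U_\alpha))$ as a map of complex spaces, you need the \emph{analytic} ideal sheaf of $\iota(p^{-1}(U_\alpha))$ in $U_\alpha\times\C^n$ to be generated by elements of $J:=\ker\bigl(\cO(U_\alpha)[x_1,\dots,x_n]\to\cO(p^{-1}(U_\alpha))\bigr)$. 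A priori the local defining functions of the image are convergent power series in the fibre variables, on which $\phi\otimes\mathrm{id}$ is not defined, so there is nothing to ``transport''. The missing statement --- that for a finite $\cO_{U_\alpha}$-algebra the complex subspace cut out by its polynomial relations coincides with its analytic spectrum, i.e.\ with $\iota(p^{-1}(U_\alpha))$ --- is true (it follows from coherence of the relation module together with Weierstrass division), but it is precisely the content that the paper outsources to \cite[(1.3)]{Bingener}, and it is the whole substance of the second assertion; it cannot be waved through as a ``direct coordinate-wise calculation''.
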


\begin{proof}
Let $U\subset S$ be a relatively compact Stein open subset.
By Lemma \ref{bcStein} below applied to $\cF:=p_*\cO_T$, the $\cO(U)$-module $\cO(T)\otimes_{\cO(S)}\cO(U)$ is finitely generated. It follows that $f$ is interiorly finite.  As $\cO(T)\otimes_{\cO(S)}\cO(U)\isoto \cO(p^{-1}(U))$ by Lemma~\ref{bcStein},  that $f^{\an}=p$ may be verified locally on $S$. We may thus assume that~$S$, hence also $T$,  is a finite-dimensional Stein space.  Under this assumption, it is shown in~\cite[(1.3)]{Bingener} that the analytifications  of $\Spec(\cO(T))$ viewed as an~$\cO(S)$\nobreakdash-scheme or as an~$\cO(T)$-scheme coincide. This exactly means that~$f^{\an}=p$.
\end{proof}

\begin{lem}
\label{bcStein}
Let $U$ be a relatively compact Stein open subset of a Stein space $S$. Let $\cF$ be a coherent sheaf on $S$. The $\cO(U)$-module $\cF(U)$ is generated by finitely many elements of $\cF(S)$, and $\cF(S)\otimes_{\cO(S)}\cO(U)\to\cF(U)$ is an isomorphism. 
\end{lem}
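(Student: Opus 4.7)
The plan is to reduce both statements to a key vanishing of the cokernel sheaf's sections after tensoring. First, I would use Cartan's Theorem~A on the Stein space $S$ together with the compactness of $\overline{U}$ to extract finitely many sections $s_1,\dots,s_r\in\cF(S)$ whose germs generate $\cF$ in an open neighborhood of $\overline{U}$; in particular, the morphism $\phi:=(s_1,\dots,s_r)\colon\cO_S^{\oplus r}\to\cF$ restricts to a surjection $\cO_U^{\oplus r}\twoheadrightarrow\cF|_U$ whose coherent kernel, via Theorem~B on the Stein space $U$, gives a surjection $\cO(U)^{\oplus r}\twoheadrightarrow\cF(U)$. This proves the first assertion and, by factoring through $\cF(S)\otimes_{\cO(S)}\cO(U)$, already establishes surjectivity in the second.

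Next, I would prove the crucial vanishing $\mathcal{C}(S)\otimes_{\cO(S)}\cO(U)=0$, where $\mathcal{C}:=\mathrm{coker}\,\phi$ is a coherent sheaf on $S$ that vanishes in some neighborhood of $\overline{U}$. Its annihilator $\mathcal{J}:=\mathrm{Ann}_{\cO_S}(\mathcal{C})$ is a coherent ideal of $\cO_S$ whose zero locus is disjoint from $\overline{U}$. By Theorem~A on $S$ combined with the compactness of $\overline{U}$, one can find finitely many sections $f_1,\dots,f_k\in\mathcal{J}(S)$ with no common zero on $\overline{U}$, and a fortiori none in $U$; by Theorems~A and~B on the Stein space $U$, the restrictions $f_i|_U$ then generate the unit ideal of $\cO(U)$. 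Since $\mathcal{J}(S)$ annihilates $\mathcal{C}(S)$ (verified stalkwise), the $f_i|_U$ annihilate $\mathcal{C}(S)\otimes_{\cO(S)}\cO(U)$, which must therefore vanish.

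To finish, let $\cK:=\ker\phi$ and split the four-term exact sequence $0\to\cK\to\cO_S^{\oplus r}\to\cF\to\mathcal{C}\to 0$ into two short exact sequences of global sections via Theorem~B on $S$; tensoring with $\cO(U)$ and invoking the vanishing above produces a surjection $\cO(U)^{\oplus r}\twoheadrightarrow\cF(S)\otimes_{\cO(S)}\cO(U)$. Since the composition $\cO(U)^{\oplus r}\to\cF(U)$ has kernel $\cK(U)$ by Theorem~B on $U$, injectivity of $\cF(S)\otimes_{\cO(S)}\cO(U)\to\cF(U)$ reduces to showing that any $a\in\cK(U)$ already maps to zero in the tensor product. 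Applying the first assertion, already proved, to the coherent sheaf $\cK$ on $S$, one writes such an $a$ as an $\cO(U)$-linear combination of restrictions of elements $b\in\cK(S)\subset\cO(S)^{\oplus r}$; since $\phi(b)=0$ in $\cF(S)$, moving these scalars across the tensor shows that the image of $a$ in $\cF(S)\otimes_{\cO(S)}\cO(U)$ vanishes. The main obstacle will be the middle step: pinpointing the precise way the Stein property of $U$ (not only that of $S$) enters, via partition-of-unity-type behavior in the annihilator ideal, to kill the cokernel sheaf after tensoring.
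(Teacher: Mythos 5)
Your proof is correct, and its skeleton (Cartan's Theorem A plus relative compactness of $U$ to get finitely many global generators, Theorem B for surjectivity, and then reducing injectivity to a kernel sheaf and moving coefficients across the tensor product) matches the paper's. The one genuine divergence is in how injectivity is organized. You fix the single presentation $\phi:\cO_S^{\oplus r}\to\cF$ once and for all, which forces you to prove the surjectivity of $\cO(U)^{\oplus r}\to\cF(S)\otimes_{\cO(S)}\cO(U)$, and hence the vanishing of $\mathcal{C}(S)\otimes_{\cO(S)}\cO(U)$ for the cokernel $\mathcal{C}$ of $\phi$ --- the annihilator-ideal, partition-of-unity-type step you flag as the main obstacle. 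That step is fine as written (the annihilator of a coherent sheaf is coherent, its zero locus misses $\overline{U}$, and finitely many global sections of it without common zero on $U$ generate the unit ideal of $\cO(U)$ by Theorems A and B on the Stein space $U$), but the paper avoids it entirely: to show that a given element $\sum_j b_j\otimes c_j$ with $\sum_j b_jc_j=0$ in $\cF(U)$ vanishes, it forms the kernel $\cN$ of the map $\cO_S^{\oplus s}\xrightarrow{(b_j)}\cF$ \emph{adapted to that element}, notes $(c_j)\in\cN(U)$, applies the already-proved finite generation to $\cN$, and concludes by the same scalar-moving computation you perform. Choosing the presentation after the element is given is what makes the cokernel analysis unnecessary; your version costs an extra (correct) lemma but proves the slightly stronger intermediate fact that one fixed presentation already surjects onto $\cF(S)\otimes_{\cO(S)}\cO(U)$.
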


\begin{proof}
As $U$ is relatively compact,  Cartan's Theorem A implies that there exist $a_1,\dots,a_r\in\cF(S)$ generating $\cF$ on $U$.  The first assertion and the surjectivity of $\cF(S)\otimes_{\cO(S)}\cO(U)\to\cF(U)$ follow from the vanishing of $H^1(U, \Ker(\cO_U^{\oplus r}\xrightarrow{a_i}\cF|_U))$.

Now, let $\sum_{j=1}^s b_j\otimes c_j\in \cF(S)\otimes_{\cO(S)}\cO(U)$ be such that $\sum_{j=1}^s b_jc_j=0$ in $\cF(U)$. Consider the exact sequence
$0\to\cN\to \cO_S^{\oplus s}\xrightarrow{b_j}\cF,$ so $(c_j)\in\cN(U)$.  By the surjectivity result applied to $\cN$, there exist $(d_{j,1}),\dots,(d_{j,t})\in\cN(S)\subset\cO(S)^{\oplus s}$ and $e_{1},\dots,e_{t}\in \cO(U)$ with $\sum_{k=1}^t e_kd_{j,k}=c_j$ for $1\leq j\leq s$. Then
\phantom{\qedhere}
$$\sum_{j=1}^s b_j\otimes c_j=\sum_{j=1}^s\sum_{k=1}^t b_j\otimes e_kd_{j,k}=\sum_{k=1}^t\Big(\sum_{j=1}^sb_jd_{j,k}\Big)\otimes e_k=0.\eqno\qed$$
\end{proof}

\subsection{\'Etale sheaves on algebraic varieties over Stein algebras}
\label{paretale}

Let $S$ be a Stein space, and let $X$ be an $\cO(S)$\nobreakdash-scheme interiorly locally of finite type. There is a commutative diagram of morphisms of sites 
\begin{equation}
\label{carredesites}
\begin{aligned}
\xymatrix@C=1.5em@R=3ex{
(X^{\an})_{\cl}\ar^{\delta}[r]\ar^{\varepsilon}[d]&X^{\an}\ar^{i_X}[d] \\
X_{\et}\ar[r]& X\rlap{,}
}
\end{aligned}
\end{equation}
where $X_{\et}$ is the small \'etale site of the scheme $X$, and $X^{\an}_{\cl}$ is the site of local isomorphisms of the topological space $X^{\an}$ (see \cite[XI, \S 4.0]{SGA43}). Indeed, if $f:Y\to X$ is \'etale, then $f^{\an}:Y^{\an}\to X^{\an}$ is a local isomorphism by Proposition~\ref{permanencean}. It follows from \cite[III, Th\'eor\`eme 4.1]{SGA41} that $\delta_*$ induces an equivalence of topoi. 

If $\L$ is an abelian sheaf on $X_{\et}$, we define $\L^{\an}:=\delta_*\varepsilon^*\L$.  For $x\in X^{\an}$, one still denotes by $x\in X$ the associated closed point with complex residue field, and one has $\L^{\an}_x=\L_x$. Finally, we say that~$\L$ is \textit{interiorly constructible} if there exists an open covering $(U_i)_{i\in I}$ of~$S$ such that the \'etale sheaves $\L|_{X_{\cO(U_i)}}$ are constructible.

\begin{lem}
\label{cdbK}
Let $K$ be an excellent Stein compact subset of a Stein space $S$.
Let $f:X\to Y$ be a morphism interiorly of finite type of $\cO(S)$\nobreakdash-schemes interiorly locally of finite type.  Fix a torsion abelian sheaf $\L$ on~$X_{\et}$. Then the base change morphisms 
$(\RR^kf_*\L)|_{Y_{\cO(K)}}\to \RR^k(f_{\cO(K)})_*(\L|_{X_{\cO(K)}})$ are isomorphisms for $k\geq 0$.
\end{lem}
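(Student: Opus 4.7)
The statement is a base change assertion for \'etale cohomology along the ring morphism $\cO(S)\to\cO(K)$. My plan is to factor this morphism as $\cO(S)\to\cO(S)_K\to\cO(K)$ and to treat each of the two factors in turn. Since the question is local on $Y_{\cO(K)}$, I would first restrict to an affine open of $Y$ which is interiorly of finite type over some $\cO(U)$, so that all morphisms become qcqs after base change to~$\cO(K)$ and the continuity properties of \'etale cohomology are available.

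\textbf{First factor.} The ring $\cO(S)_K$ is, by definition, the localization $\cO(S)[\cS_K^{-1}]$, which is the filtered colimit $\colim_{s\in\cS_K}\cO(S)[s^{-1}]$. Accordingly, $\Spec(\cO(S)_K)\to\Spec(\cO(S))$ is a cofiltered limit, with affine transition maps, of open immersions $\Spec(\cO(S)[s^{-1}])\hookrightarrow\Spec(\cO(S))$. Base change along each of these open immersions trivially commutes with $\RR^kf_*$, and the compatibility of \'etale cohomology of torsion sheaves with cofiltered limits of qcqs schemes having affine transition morphisms (\cite[VII]{SGA42}) shows that the base change morphism is an isomorphism along $\cO(S)\to\cO(S)_K$.

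\textbf{Second factor.} By Lemma~\ref{excellent}~(ii) and~(iii), both $\cO(S)_K$ and $\cO(K)$ are noetherian (in fact excellent), and by Lemma~\ref{regular} the morphism $\cO(S)_K\to\cO(K)$ is regular. The N\'eron--Popescu desingularization theorem then expresses $\cO(K)$ as a filtered colimit $\colim_{i\in I}B_i$ of smooth finitely presented $\cO(S)_K$-algebras. For each $i\in I$, the smooth base change theorem for \'etale cohomology (applicable because $\L$ is torsion and every $\C$-algebra is a $\Q$-algebra, so torsion orders are automatically invertible) gives an isomorphism
\[
(\RR^k(f_{\cO(S)_K})_*(\L|_{X_{\cO(S)_K}}))|_{Y_{B_i}}\isoto\RR^k(f_{B_i})_*(\L|_{X_{B_i}}).
\]
Taking the colimit over $i\in I$ and invoking the same continuity result once more (now for the cofiltered limit $\Spec(\cO(K))=\lim_i\Spec(B_i)$) yields the desired base change isomorphism along $\cO(S)_K\to\cO(K)$. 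Composing the two steps gives the lemma.

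\textbf{Main obstacle.} The only subtlety is verifying the qcqs hypotheses needed for the two applications of \'etale continuity and for smooth base change. This is where the hypothesis that $f$ is interiorly of finite type is essential: combined with the initial reduction to an affine open of $Y$, it ensures that $f_{\cO(K)}$ and each $f_{B_i}$ are genuinely of finite type between locally noetherian schemes, hence qcqs, so that all the limit and base change theorems apply without difficulty. The crucial structural ingredients---excellence of $\cO(S)_K$ and $\cO(K)$ and the regularity of the transition map---have already been supplied by Lemmas~\ref{excellent} and~\ref{regular}.
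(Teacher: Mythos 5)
Your proof is correct and follows essentially the same route as the paper: factor the base change through $\cO(S)_K$, handle the localization step via the stalkwise/limit description of higher direct images, and handle $\cO(S)_K\to\cO(K)$ by combining Lemma~\ref{regular} with N\'eron--Popescu desingularization and smooth base change. The only cosmetic difference is that you justify the first step through a cofiltered limit of open immersions and continuity, where the paper directly invokes the description of stalks of $\RR^k f_*$.
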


\begin{proof}
Fix $k\geq 0$.
As $\cO(S)\to\cO(S)_K$ is a localization, the description of the fibers of higher push-forwards in \'etale cohomology given in \cite[Theorem \href{https://stacks.math.columbia.edu/tag/03Q9}{03Q9}]{SP} implies that the base change morphism 
\begin{equation}
\label{eq1lem}
\RR^k f_*(\L)|_{Y_{\cO(S)_K}}\to\RR^k(f_{\cO(S)_K})_*(\L|_{X_{\cO(S)_K}})
\end{equation}
is an isomorphism. By N\'eron--Popescu desingularization (see \cite[Theorem~1.1]{Swan})
and Lemma \ref{regular}, the ring $\cO(K)$ is a filtered colimit of smooth $\cO(S)_K$-algebras.
By smooth base change (see \cite[Lemma \href{https://stacks.math.columbia.edu/tag/0F09}{0F09}]{SP}), the base change morphism
\begin{equation}
\label{eq2lem}
\big(\RR^k (f_{\cO(S)_K})_*(\L|_{X_{\cO(S)_K}})\big)|_{Y_{\cO(K)}}\to\RR^k (f_{\cO(K)})_*(\L|_{X_{\cO(K)}})
\end{equation}
is also an isomorphism. The lemma follows from (\ref{eq1lem}) and (\ref{eq2lem}).
\end{proof}

\begin{lem}
\label{constr}
Let $S$ be a Stein space. 
Let $f:X\to Y$ be a morphism interiorly of finite type of $\cO(S)$\nobreakdash-schemes interiorly locally of finite type.  Let $\L$ be an interiorly constructible sheaf on~$X_{\et}$. Then $\RR^kf_*\L$ is interiorly constructible for $k\geq 0$.
\end{lem}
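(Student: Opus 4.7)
The plan is to reduce, via Lemma~\ref{cdbK}, to the classical finiteness theorem for higher direct images under finite type morphisms of noetherian schemes. Both the interior constructibility of $\RR^k f_* \L$ on $S$ and constructibility on $Y$ are local properties, so after replacing $S$ by an open subset of a cover witnessing the interior constructibility of $\L$ and the interior finite type of $f$, and after restricting to an affine open of $Y$, I may assume $\L$ is constructible on $X_{\et}$, $f \colon X \to Y$ is of finite type, and $Y$ is affine. It then suffices to show that every $s \in S$ admits an open neighborhood $U$ such that $(\RR^k f_* \L)|_{Y_{\cO(U)}}$ is constructible.

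Given $s \in S$, use Lemma~\ref{semianal}~(i) to pick an excellent $\cO(S)$\nobreakdash-convex Stein compact neighborhood $L$ of $s$; by definition of excellence, $\cO(L)$ is noetherian, and in fact excellent by Lemma~\ref{excellent}~(iii). Applying Lemma~\ref{int}~(i) to the properties ``of finite type'' and ``affine'' (both stable by base change and fpqc local on the base), the scheme $Y_{\cO(L)}$ is affine of finite type over $\cO(L)$, hence noetherian, and $f_{\cO(L)} \colon X_{\cO(L)} \to Y_{\cO(L)}$ is of finite type. The pullback $\L|_{X_{\cO(L)}}$ is constructible. The classical finiteness theorem (Deligne) for higher direct images of constructible torsion sheaves under finite type morphisms of excellent noetherian schemes then yields that $\RR^k (f_{\cO(L)})_* (\L|_{X_{\cO(L)}})$ is constructible on $Y_{\cO(L)}$, and Lemma~\ref{cdbK} identifies this sheaf with $(\RR^k f_* \L)|_{Y_{\cO(L)}}$.

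To transfer this from the compact $L$ to an open neighborhood of $s$, pick any open $U \subset S$ with $s \in U \subset L$. The restriction $\cO(L) \to \cO(U)$, obtained by restricting to $U$ any representative germ defined on a neighborhood of $L$, induces $Y_{\cO(U)} \to Y_{\cO(L)}$, along which $(\RR^k f_* \L)|_{Y_{\cO(L)}}$ pulls back to $(\RR^k f_* \L)|_{Y_{\cO(U)}}$; pullback preserving constructibility, the latter is constructible. As $s$ varies, the resulting $U$'s form an open cover of $S$, proving the interior constructibility of $\RR^k f_* \L$. The main obstacle here is the non-noetherian nature of the global Stein algebra $\cO(S)$, which prevents a direct appeal to the classical theorem; Lemma~\ref{cdbK} is precisely the technical bridge that allows one to descend to the excellent noetherian algebras $\cO(L)$ attached to excellent Stein compacta.
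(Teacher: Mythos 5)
Your proof is correct and follows essentially the same route as the paper's: reduce to the local situation on $S$ and $Y$, pass to an excellent Stein compact neighborhood $L$ of a point $s$ so that $\cO(L)$ is excellent noetherian (Lemmas~\ref{semianal} and~\ref{excellent}~(iii)) and $f_{\cO(L)}$ is of finite type (Lemma~\ref{int}~(i)), invoke the finiteness theorem for higher direct images of constructible sheaves on excellent noetherian schemes, and transport the conclusion back via the base change isomorphism of Lemma~\ref{cdbK}. The only cosmetic difference is that the paper shrinks the compact neighborhood so that $\L$ becomes constructible there rather than performing your preliminary reduction, and it cites the combination of \cite[XIX, Th\'eor\`eme~5.1]{SGA43} with \cite[Theorem~1.1]{Temkin} where you invoke ``Deligne's finiteness theorem.''
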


\begin{proof}
Working locally on $Y$, we may suppose that $X$ and $Y$ are interiorly of finite type over $\cO(S)$.
Fix $s\in S$.  Let $K$ be an excellent Stein compact neighborhood of~$s$ in $S$ chosen small enough so that 
$\L|_{X_{\cO(K)}}$ is constructible (see Lemma~\ref{semianal}~(ii)).  As $f_{\cO(K)}:X_{\cO(K)}\to Y_{\cO(K)}$ is a morphism of  $\cO(K)$-schemes of finite type (see Lemma~\ref{int}~(i)), which are excellent noetherian schemes by Lemma \ref{excellent} (iii), the sheaf $\RR^k (f_{\cO(K)})_*(\L|_{X_{\cO(K)}})$ is constructible by \cite[XIX, Th\'eor\`eme 5.1]{SGA43} and \cite[Theorem 1.1]{Temkin}.  It follows from Lemma \ref{cdbK} that $(\RR^k f_*\L)|_{X_{\cO(U)}}$ is constructible for any open neighborhood $U$ of $s$ in $K$.
\end{proof}

\subsection{A relative Artin comparison theorem over Stein spaces}
\label{parrelArtin}

Here is the statement of our relative comparison theorem. In the constructible case, its proof is entirely parallel to that of \cite[XVI, Th\'eor\`eme~4.1]{SGA43}.

\begin{thm}
\label{relArtin}
Let $S$ be a Stein space.
Let $f:X\to Y$ be a morphism interiorly of finite type of $\cO(S)$-schemes interiorly locally of finite type. 
Let $\L$ be a torsion abelian sheaf on $X_{\et}$. If either $f$ is interiorly proper or $\L$ is interiorly constructible, the base change morphisms $\RR^k f_*(\L)^{\an}\to\RR^k f^{\an}_*(\L^{\an})$ are isomorphisms for $k\geq 0$.
\end{thm}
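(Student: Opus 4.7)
The plan is to mimic the d\'evissage proof of Artin's comparison theorem given in \cite[XVI, \S 4]{SGA43}, adapting each step to the relative setting over a Stein base via the tools of \S\ref{parBingener} and \S\ref{paretale}. The claim is local on $Y$ and on $Y^{\an}$, and also local on $S$, so by passing to $Y$ interiorly affine and then to excellent Stein compact neighborhoods $K\subset S$ of arbitrary points $s\in S$ (using Lemmas~\ref{semianal}~(ii), \ref{excellent}~(iii) and \ref{cdbK}), one reduces to proving the analogous base change statement for finite-type morphisms between finite-type schemes over the excellent noetherian ring $\cO(K)$. In the constructible case, $\L$ may then be taken constructible, and Lemma~\ref{constr} guarantees that constructibility is preserved under the various higher direct images that appear in the d\'evissage.

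The first main step is to reduce to the case where $f$ is proper. In the constructible case, Nagata compactification over $\cO(K)$ factors $f = \bar f \circ j$ with $j$ an open immersion and $\bar f$ proper, so, via $\RR f_* = \RR\bar f_* \circ \RR j_*$, the base change for $f$ follows from that for $\bar f$ (the proper case, handled below) together with the analogous base change statement for $\RR j_*\L$. The latter is the heart of the SGA~4 argument and is carried out by induction on the relative dimension, reducing to fibrations in smooth curves and further to divisors and normalizations, all the while using Lemma~\ref{constr} to keep sheaves constructible and Proposition~\ref{permanencean} to transfer algebraic properties of morphisms (proper, \'etale, closed immersion, etc.) to their analytifications.

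With $f$ proper, one verifies the assertion on stalks at points $y \in Y^{\an}$. By \'etale proper base change over the noetherian ring $\cO(K)$, combined with Lemma~\ref{cdbK}, the stalk of $(\RR^k f_*\L)^{\an}$ at $y$ equals $H^k_{\et}(X_y, \L|_{X_y})$, where $X_y$ is the fiber, a scheme of finite type over $\kappa(y) = \C$. On the analytic side, $f^{\an}$ is proper by Proposition~\ref{permanencean}, so analytic proper base change identifies $(\RR^k f^{\an}_*\L^{\an})_y$ with $H^k((X_y)^{\an}, \L^{\an}|_{(X_y)^{\an}})$. The base change morphism thereby becomes the classical Artin comparison isomorphism \cite[XVI, Th\'eor\`eme~4.1]{SGA43} for the complex algebraic variety $X_y$. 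The main difficulty throughout is performing Nagata compactification and the ensuing d\'evissage of $\RR j_*$ over a sufficiently well-behaved base, which is precisely what the systematic reduction to the excellent noetherian ring $\cO(K)$ accomplishes.
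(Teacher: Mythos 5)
Your reduction to the interiorly proper case is sound and matches the paper: Nagata compactification plus the composite-functor spectral sequence leaves you with (a) the proper case and (b) the comparison for $\RR j_*$ with $j$ an open immersion; and your treatment of (a) — \'etale proper base change over the noetherian base via Lemma~\ref{cdbK}, the classical Artin comparison on the fiber $X_y$ over $\kappa(y)=\C$, and topological proper base change via Proposition~\ref{permanencean} — is exactly the paper's Step~1.

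The gap is in (b). You dispose of the open-immersion case by invoking ``the heart of the SGA~4 argument \dots induction on the relative dimension, reducing to fibrations in smooth curves and further to divisors and normalizations.'' That argument does not transport to finite-type schemes over $\cO(K)$: elementary fibrations in curves (good neighborhoods) are a feature of varieties over a field, and even granting some relative analogue, the induction in \cite[XVI, \S 4]{SGA43} ultimately rests on computations for curves over an algebraically closed field, whereas here the fibers of $\Spec(\cO(K))$ over non-closed points have no analytic counterpart to compare with. This is precisely the obstruction the paper flags, and its actual proof of the open-immersion case takes a different route that your sketch omits entirely: one passes to the local ring $\cO_{S,s}$, applies Temkin's resolution of singularities (legitimate because $\cO(K)$ and hence $\cO_{S,s}$ are excellent, Lemma~\ref{excellent}), stratifies the complement of the open immersion by regular loci, spreads all of this out over a Stein neighborhood by limit arguments and Proposition~\ref{extan}, and then computes both $(\RR^k j_*\Z/m)_y$ and $(\RR^k j^{\an}_*\Z/m)_y$ for a regular closed complement via \'etale purity \cite[XIX, Th\'eor\`emes~3.2 et 3.4]{SGA43} and the topological Thom isomorphism, checking that the two isomorphisms are compatible. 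Without this purity step (or a genuine substitute for it over the base $\cO(K)$), your d\'evissage does not close, so the proposal as written is incomplete at its central point.
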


\begin{proof}Fix $y\in Y^{\an}$ and let $s$ be the image of $y$ in $S$. To show that the morphism 
$\RR^k f_*(\L)_y^{\an}\to\RR^k f^{\an}_*(\L^{\an})_y$ is an isomorphism for $k\geq 0$,  we proceed in several~steps. 

\begin{Step}
\label{step1}
We first deal with the case where $f$ is interiorly proper.
\end{Step}

By Lemma \ref{int} (i), the base change $f_{\cO_{S,s}}:X_{\cO_{S,s}}\to Y_{\cO_{S,s}}$ of~$f$
is proper. By Lemma \ref{cdbK}, proper base change \cite[Theorem \href{https://stacks.math.columbia.edu/tag/095T}{095T}]{SP}, and Artin's comparison theorem \cite[XVI, Th\'eor\`eme 4.1]{SGA43}, the base change morphisms
 \begin{equation}
\label{eq1}
(\RR^kf_*\L)_y\to\big(\RR^k (f_{\cO_{S,s}})_*(\L|_{X_{\cO_{S,s}}})\big)_y\to H^k_{\et}(X_y,\L|_{X_y})\to H^k(X_y^{\an},\L^{\an}|_{X^{\an}_y})
\end{equation}
are isomorphisms. As $f^{\an}$ is proper by Proposition \ref{permanencean},  proper base change in topology (see \cite[III, Theorem 6.2]{Iversen}) implies that the base change morphism
\begin{equation}
\label{eq2}
\RR^k f^{\an}_{*}(\L^{\an})_y\to H^k(X_y^{\an},\L^{\an}|_{X^{\an}_y})
\end{equation}
is also an isomorphism. Combining (\ref{eq1}) and (\ref{eq2}) shows that the base change morphism $\RR^k f_*(\L)_y^{\an}\to\RR^k f^{\an}_*(\L^{\an})_y$ is an isomorphism, as wanted.

\begin{Step}
\label{step2}
Setup of the proof when $\L$ is interiorly constructible.
\end{Step}

By Lemma \ref{cdbK}, if $U\subset S$ is a Stein open neighborhood of $s$, the base change morphism $(\RR^kf_*\L)_y\to\big(\RR^k(f_{\cO(U)})_*(\L|_{X_{\cO(U)}})\big)_y$ is an isomorphism.  Therefore, to check that $\RR^k f_*(\L)_y^{\an}\to\RR^k f^{\an}_*(\L^{\an})_y$ is an isomorphism, we may at any time replace~$S$ with a Stein open neighborhood of~$s$ in~$S$. As one can moreover work locally on~$Y$, one may assume that $f$ is a morphism of finite presentation between $\cO(S)$\nobreakdash-schemes of finite presentation, that $\L$ is constructible, that $Y$ is separated, and that $X^{\an}$ is finite-dimensional.
We argue by induction on the dimension of~$X^{\an}$.

\begin{Step}
\label{step3}
We reduce to the case where $f$ is an open immersion 
of reduced separated $\cO(S)$-schemes of finite presentation such that $f_{\cO_{S,s}}$ has dense image, and $\L=\Z/m$.
\end{Step}

By \cite[Lemmas \href{https://stacks.math.columbia.edu/tag/09Z7}{09Z7}, \href{https://stacks.math.columbia.edu/tag/095R}{095R} and \href{https://stacks.math.columbia.edu/tag/03RX}{03RX}]{SP}, the sheaf $\L$ admits a resolution
\begin{equation}
\label{resolution1}
0\to\L\to\L_0\to\L_1\to\dots
\end{equation}
by constructible sheaves $\L_p$ that are finite products of sheaves of the form $\pi_*\M$ with $\pi:Z\to X$ finite of finite presentation and~$\M$ constant constructible on $Z_{\et}$.  Making use of the spectral sequences
\begin{alignat}{4}
&E_1^{p,q}=\RR^qf_*(\L_p)^{\an} &\implies& \RR^{p+q}f_*(\L)^{\an}\textrm{\hspace{1em} and }\nonumber\\
&E_1^{p,q}=\RR^qf^{\an}_*(\L_p^{\an})&\implies& \RR^{p+q}f^{\an}_*(\L^{\an})\nonumber
\end{alignat}
associated with the resolution (\ref{resolution1}), we reduce to the case where $\L$ is of the form~$\pi_*\M$. 
As the higher direct images of $\pi$ and $\pi^{\an}$ vanish by \cite[Proposition~\href{https://stacks.math.columbia.edu/tag/03QP}{03QP}]{SP} and \cite[III, Theorem 6.2]{Iversen}, and as $\pi_*(\M)^{\an}\isoto\pi^{\an}_*(\M^{\an})$ by Step \ref{step1}, we may further assume that $\L$ is constant (after replacing $X$ with~$Z$ and~$\L$ with~$\M$), and then that $\L\simeq \Z/m$.

Let $\cU=(U_i)_{i\in I}$ be a finite affine covering of $X$. Computing cohomology using \v{C}ech spectral sequences associated with $\cU$, we may replace $f$ with $f|_{U_J}$, where $U_J:=\cap_{i\in J}\,U_i$ for $J\subset I$. Since $X$ was quasi-separated (as is any $\cO(S)$-scheme of finite presentation), this trick reduces us to the case where $X$ is separated.

As the \'etale and classical topologies are insensitive to nilpotents (see \cite[Proposition~\href{https://stacks.math.columbia.edu/tag/03SI}{03SI}]{SP}), we may assume that $X$ and $Y$ are reduced. 

Write~$f_{\cO_{S,s}}$ as the composition $X_{\cO_{S,s}}\xrightarrow{g_s}\oX_s\xrightarrow{h_s} Y_{\cO_{S,s}}$ of an open immersion~$g_s$ with dense image and of a proper morphism~$h_s$ using Nagata's compactification theorem \cite[Theorem \href{https://stacks.math.columbia.edu/tag/0F41}{0F41}]{SP}.  Use a limit argument based on
\cite[Lemmas \href{https://stacks.math.columbia.edu/tag/01ZM}{01ZM}, \href{https://stacks.math.columbia.edu/tag/086Q}{086Q}, \href{https://stacks.math.columbia.edu/tag/0EUU}{0EUU} and \href{https://stacks.math.columbia.edu/tag/081F}{081F}]{SP} to extend, after shrinking~$S$, these morphisms of schemes to a diagram $X\xrightarrow{g}\oX\xrightarrow{h} Y$ of reduced separated $\cO(S)$\nobreakdash-schemes of finite presentation, where~$g$ is an open immersion and~$h$ is proper.
As Theorem \ref{relArtin} holds for $h$ (and any torsion \'etale sheaf) by Step~\ref{step1}, the composite functors spectral sequence computing~$\RR^k(h\circ g)_*\Z/m$ reduces us to proving it for $g$.

\begin{Step}
\label{step4}
We reduce to the case where moreover $Y_{\cO_{S,s}}$ is regular.
\end{Step}

Let $\mu_s:\wY_{s}\to Y_{\cO_{S,s}}$ be a resolution of singularities (use \cite[Theorem 1.1]{Temkin} and Lemma \ref{excellent} (iii)). Set $\wX_{s}:=X\times_{Y_{\cO_{S,s}}}\wY_{s}$, let $j_s:U_{s}\hookrightarrow X_{s}$ be a dense open subset over which $\mu_s$ is an isomorphism, and let $\tj_s:U_s\to\wX_s$ be the lift of~$j_s$. By a limit argument based on \cite[Lemmas \href{https://stacks.math.columbia.edu/tag/01ZM}{01ZM}, \href{https://stacks.math.columbia.edu/tag/0EUU}{0EUU} and \href{https://stacks.math.columbia.edu/tag/081F}{081F}]{SP}, the above schemes and morphisms arise by base change, after shrinking $S$, from a cartesian diagram
\begin{equation*}
\begin{aligned}
\xymatrix@C=1.5em@R=3ex{
U\ar@{=}[d]\ar^{\tj}[r]&\wX\ar^{\mu_X}[d]\ar^{\tf}[r]&\wY\ar^{\mu}[d] \\
U\ar^{j}[r]&X\ar^{f}[r]& Y\rlap{,}
}
\end{aligned}
\end{equation*}
whose horizontal arrows are quasi-compact open immersions and whose vertical arrows are proper of finite presentation. Let $i:Z\to X$ and $\ti:\wZ\to\wX$ be the inclusions of the complements of $U$ in~$X$ and $\wX$ respectively.
As $j_s$ and $\tj_s$ have dense image,  both $\dim(Z^{\an})$ and $\dim((\wZ)^{\an})$ are $<\dim(X^{\an})$ after possibly shrinking $S$ by Proposition \ref{extan} (i).  
The induction hypothesis thus shows that Theorem \ref{relArtin} holds for $(f\circ i,\Z/m)$ and $(\tf\circ \ti,\Z/m)$, hence for
$(f,i_*\Z/m)$ and $(\tf,\ti_*\Z/m)$.

The five lemma applied to long exact sequences induced by the short exact sequence $0\to j_!\,\Z/m\to\Z/m\to i_*\Z/m\to 0$ now shows that, to prove Theorem \ref{relArtin} for $(f,\Z/m)$, it suffices to prove it for $(f,j_!\,\Z/m)$. 

One computes using proper base change in \'etale cohomology \cite[Theorem~\href{https://stacks.math.columbia.edu/tag/095T}{095T}]{SP} that $(\mu_{X})_*(\tj_!\,\Z/m)=j_!\,\Z/m$ and $\RR^q(\mu_{X})_*(\tj_!\,\Z/m)=0$ if $q>0$, and using proper base change in topology \cite[III, Theorem 6.2]{Iversen} that $(\mu_{X}^{\an})_*(\tj^{\an}_!\,\Z/m)=j^{\an}_!\,\Z/m$ and $\RR^q(\mu_{X}^{\an})_*(\tj^{\an}_!\,\Z/m)=0$ if $q>0$.
Using the composite functors spectral sequences computing $\RR^k(f\circ\mu_X)_*$ and $\RR^k((f\circ\mu_X)^{\an})_*$, we deduce that it suffices to prove Theorem \ref{relArtin} for $(\tf,\tj_!\,\Z/m)$.  By another application of the five lemma based on the short exact sequence $0\to \tj_!\,\Z/m\to\Z/m\to \ti_*\Z/m\to 0$, it remains to prove Theorem \ref{relArtin} for $(\tf,\Z/m)$.

\begin{Step}
\label{step5}
Under the additional hypotheses to which Steps \ref{step3} and \ref{step4} have reduced us to, we prove the theorem in the equivalent form (in view of Lemma \ref{cdbK}) that the natural morphisms $\RR^k (f_{\cO_{S,s}})_*(\Z/m)_y\to\RR^k f^{\an}_*(\Z/m)_y$ are isomorphisms for $k\geq 0$.
\end{Step}

The noetherian scheme $Y_{\cO_{S,s}}\setminus f_{\cO_{S,s}}(X_{\cO_{S,s}})$ is excellent by Lemma~\ref{excellent}~(iii). Stratify it by its regular locus, the regular locus of its singular locus, etc. Using
\cite[Lemmas \href{https://stacks.math.columbia.edu/tag/01ZM}{01ZM},  \href{https://stacks.math.columbia.edu/tag/01ZP}{01ZP} and  \href{https://stacks.math.columbia.edu/tag/0EUU}{0EUU}]{SP},  lift this stratification to a stratification of~${Y\setminus f(X)}$, after maybe shrinking $S$.  One can thus write $f=f_r\circ\dots\circ f_1$ where ${f_i:X_{i-1}\to X_i}$ is a quasi-compact open immersion (with $X_0:=X$ and $X_r:=Y$), 
with complementary closed immersion $g_i:Z_i=X_i\setminus f_i(X_{i-1})\to X_i$,
so that~$f_{i,\cO_{S,s}}$ has dense image and $(Z_i)_{\cO_{S,s}}$ is regular. 

After shrinking~$Y$, we may assume that $g_{i,\cO_{S,s}}$ is a regular immersion of pure codimension~$c_i\geq 1$ (for $1\leq i\leq r$), and that $y\notin (f_{r-1}(X_{r-1}))^{\an}$ (after possibly decreasing $r$). Using Proposition \ref{extan} (ii), one can ensure after shrinking $S$ that~$g_i^{\an}$ is a closed embedding of complex manifolds of pure codimension $c_i$.
It follows from these facts that $\Z/m\isoto (f_i^{\an})_*\Z/m$ and, in view of \cite[XIX, Th\'eor\`eme~2.1]{SGA43},
that $\Z/m\isoto (f_{i,\cO_{S,s}})_*\Z/m$.

Assuming for now that Theorem \ref{relArtin} holds for all the $(f_i,\Z/m)$, we prove by decreasing induction on $i$ that it holds for $(f_r\circ\dots\circ f_i,\Z/m)$.  Taking $i=1$ concludes the proof of the theorem.  To perform the induction step,  we make use of the composite functors spectral sequences computing $\RR^k((f_{r,\cO_{S,s}}\circ\dots\circ f_{i+1,\cO_{S,s}})\circ f_{i,\cO_{S,s}})_*\Z/m$ and $\RR^k((f_{r}\circ\dots\circ f_{i+1})^{\an}\circ f_i^{\an})_*\Z/m$ respectively, noting that Theorem \ref{relArtin} holds for $(f_{r}\circ\dots\circ f_{i+1},\RR^q(f_{i})_*\Z/m)$ when~$q>0$ by the induction hypothesis (because the sheaf $\RR^q(f_{i})_*\Z/m$ is interiorly constructible by Lemma \ref{constr} and because it is supported on $Z_i$ which satisfies $\dim(Z_i^{\an})<\dim(X_i^{\an})=\dim(X^{\an})$).

It remains to prove the theorem for $(f_i,\Z/m)$.  It follows from the Thom isomorphism theorem that $(\RR^k (f_i^{\an})_*\Z/m)_y=\Z/m$ if $k\in\{0,2c-1\}$ and that this group vanishes otherwise. 
 It follows from the purity isomorphism (the analogue of the Thom isomorphism theorem in \'etale cohomology, for which see \cite[XIX, Th\'eor\`emes~3.2 and 3.4]{SGA43}) that $(\RR^k(f_{i,\cO_{S,s}})_*\Z/m)_y=\Z/m$ if $k\in\{0,2c-1\}$ and that this group vanishes otherwise. 
Inspecting the construction of these isomorphisms shows that they are compatible,  and completes the proof of the theorem.
\end{proof}

\begin{rem}
Theorem \ref{relArtin} fails if one drops both the properness and the constructibility hypotheses, already if $S$ is a point, if $f:\A^1_{\C}\to \Spec(\C)$ is the structural morphism, if $k=0$, and if $\L$ is a direct sum of skyscraper sheaves at the integers. 
\end{rem}

\section{Killing ramification on finite coverings}
\label{parram}

The goal of this section is result is Proposition \ref{thkillram}. It will be used in conjunction with Proposition \ref{killunram} in the proof of our Theorem \ref{Artincompcx}.

\subsection{Killing ramification locally}
\label{parkill1}

We first apply Theorem \ref{relArtin} to kill the ramification of cohomology classes in the local analytic setting.

\begin{prop}
\label{lemkillloc}
Let $U$ be an open subset of a connected normal Stein space~$S$. Choose $s\in U$.
Let $Z\subset S$ be a closed analytic subset. Fix $\alpha\in H^k(U\setminus (Z\cap U),\Z/m)$ with $k,m\geq 1$. There exist a finite surjective holomorphic map $q:R\to S$ of connected normal Stein spaces, a point $r\in R$ with $q(r)=s$, and an open  neighborhood~$\Theta$ of $r$ in $q^{-1}(U)$ with $\alpha|_{\Theta\setminus (q^{-1}(Z)\cap\Theta)}=0$ in $H^k(\Theta\setminus (q^{-1}(Z)\cap\Theta),\Z/m)$.
\end{prop}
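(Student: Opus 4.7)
The plan is to transfer the question via the relative Artin comparison Theorem~\ref{relArtin} to an \'etale cohomology question over the henselization $\cO(U)^{\h}_{\kp_s}$, to solve it there by combining resolution of singularities with a Kummer--Abhyankar type argument, and finally to globalize the resulting finite cover to~$S$ via Oka--Weil approximation.

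First, I would shrink~$U$ to a connected Stein open neighborhood of~$s$ over which $Z$ is the common zero locus of finitely many holomorphic functions $f_1,\dots,f_N\in\cO(U)$. By Lemma~\ref{openanal}, this realizes $U\setminus Z\hookrightarrow U$ as the analytification of a quasi-compact open immersion $j:V\hookrightarrow\Spec(\cO(U))$, interiorly of finite type, to which I would apply Theorem~\ref{relArtin} with $\L=\Z/m$. The base change isomorphism $\RR^k j_*(\Z/m)^{\an}\isoto \RR^k j^{\an}_*(\Z/m)$, evaluated on stalks at~$s$, yields (using Lemma~\ref{cdbK} and that the residue field at the closed point $\kp_s$ is $\C$, so that henselization and strict henselization coincide) a class
\begin{equation*}
\beta\in H^k_{\et}\bigl(V\times_{\cO(U)}\Spec(\cO(U)^{\h}_{\kp_s}),\Z/m\bigr)
\end{equation*}
corresponding to the image of $\alpha$ in $\colim_{\Omega\ni s}H^k(\Omega\setminus Z,\Z/m)$.

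The heart of the argument is then a purely algebraic claim: over the normal excellent henselian local ring $\cO(U)^{\h}_{\kp_s}$ with residue field $\C$ (normal since $S$ is, and excellent by Lemma~\ref{excellent} combined with standard facts about henselization), any \'etale $\Z/m$-cohomology class on an open subscheme of the form $\Spec\setminus V(f_1,\dots,f_N)$ is killed after pullback to a suitable finite extension of normal henselian local rings. I would establish this by first applying resolution of singularities (Temkin) to $V(f_i)$, reducing to the case of the complement of a simple normal crossings divisor in a regular strict henselian local scheme, and then by a Kummer--Abhyankar construction, extracting $m$-th roots of local equations of the irreducible components of the divisor, to trivialize the class. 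This is the main obstacle: producing an explicit finite cover killing an \'etale cohomology class on the complement of a closed subset in $\Spec$ of an excellent normal henselian local ring, for which the geometric input of resolution combined with tame Kummer covers is indispensable.

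To conclude, I would descend the algebraic cover to a finite $\cO(U_0)$-scheme for some Stein open neighborhood $U_0\subset U$ of~$s$ by noetherian approximation, and analytify via Lemma~\ref{finiteanal} and Proposition~\ref{permanencean} to obtain a finite surjective holomorphic map $\tilde q:\tilde R\to U_0$ of connected normal Stein spaces (after passing to a connected component and normalizing) together with a distinguished point $\tilde r\in \tilde q^{-1}(s)$. By Corollary~\ref{eqcatetale}, this cover corresponds to a finite \'etale $\cM(U_0)$-algebra; after restricting to an $\cO(S)$-convex Stein compact neighborhood of~$s$, approximating its defining data by meromorphic functions on~$S$ via the Oka--Weil theorem, and invoking Proposition~\ref{eqcat}, one obtains a finite surjective holomorphic map $q:R\to S$ of connected normal Stein spaces whose germ near a suitable preimage $r\in q^{-1}(s)$ matches that of $\tilde q$ near~$\tilde r$. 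Consequently $\alpha|_{\Theta\setminus(q^{-1}(Z)\cap\Theta)}=0$ for a sufficiently small open neighborhood $\Theta$ of~$r$ in $q^{-1}(U)$.
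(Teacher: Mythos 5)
Your overall architecture --- translating the local vanishing into an \'etale statement via Lemma~\ref{openanal} and Theorem~\ref{relArtin}, killing the resulting \'etale class by a finite cover, then spreading out and analytifying --- is the same as the paper's. But the step you yourself call ``the heart of the argument'' has a genuine gap. After resolving the singularities of $V(f_1,\dots,f_N)$ you get a proper morphism $\mu:X'\to\Spec(A)$ with $X'$ regular and $E:=\mu^{-1}(V(f_i))$ a simple normal crossings divisor; the open set carrying your class is $X'\setminus E$, and $X'$ is \emph{not} local, so the fact that the cohomology of the complement of an SNC divisor in a strictly henselian regular local scheme is generated by Kummer classes no longer applies. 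By proper base change, $H^k_{\et}(X',\Z/m)$ contains the cohomology of the exceptional fibre, which can carry classes (e.g.\ transcendental degree-$2$ classes on a surface contracted to the singular point) that are not divisor classes; these survive into $H^k_{\et}(X'\setminus E,\Z/m)$ and are not killed by Kummer covers ramified along $E$. Moreover the components of $E$ need not be globally principal on $X'$, so ``extracting $m$-th roots of local equations'' does not by itself produce a finite cover of $\Spec(A)$. Killing such classes by a finite surjective cover of the base is precisely the content of Bhatt's theorem \cite[Theorem~1.1]{Bhatt}, which is the paper's key input here (applied to the quasi-affine scheme $(X_V)_s$ over the localization $\cO(S)_s$, after representing the stalk of $\RR^kj_*\Z/m$ on an honest affine \'etale neighbourhood); it is not recoverable from resolution plus tame Kummer covers alone.

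Your globalization step is also flawed. Oka--Weil approximation of the coefficients of the minimal polynomial of the finite \'etale $\cM(U_0)$-algebra produces a \emph{nearby} cover of $S$, not one whose germ at a preimage of $s$ ``matches'' that of $\tilde{q}$: approximation is not extension, a perturbed cover admits no canonical comparison map with the original one, and meromorphic functions on a Stein open subset need not extend to $S$ at all, so you cannot transport the vanishing of the class. The paper sidesteps this by never leaving the category of $\cO(S)$-schemes: the \'etale neighbourhood of $s$, Bhatt's cover spread out over a Zariski-open $W\subset\Spec(\cO(S))$ (whose analytification is the complement of an analytic subset of $S$, hence large), and the Zariski's-Main-Theorem compactifications $\oX$ and $\oY$ are finite over $\Spec(\cO(S))$ from the start, so the desired $q:R\to S$ is simply the normalization of a suitable irreducible component of $(\oY)^{\an}$, with no analytic extension or approximation needed.
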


\begin{proof}
We may assume that $Z$ is nowhere dense in $S$ (otherwise $Z=S$ and the lemma is trivial).
Let $\cI_Z\subset\cO_S$ be the ideal sheaf of $Z$. Use Cartan's Theorem~A to choose finitely many elements $(f_i)_{i\in I}$ of $\cI_Z(S)$ that generate~$\cI_{Z,s}$. After replacing $Z$ with the vanishing locus of the $f_i$, we may apply Lemma \ref{openanal} to find a quasi-compact open immersion ${j:V\hookrightarrow \Spec(\cO(S))}$ such that $j^{\an}$ identifies with $S\setminus Z\hookrightarrow S$.
Theorem \ref{relArtin} then shows that the base change morphism
\begin{equation}
\label{bciso}
(\RR^k j_*\Z/m)^{\an}\to\RR^k j^{\an}_*\Z/m
\end{equation}
is an isomorphism.
Let $\alpha_s$ be the image of $\alpha$ in $(\RR^k j^{\an}_*\Z/m)_s$ and consider its inverse image $\beta_s\in( \RR^k j_*\Z/m)_{s}$ by the isomorphism (\ref{bciso}). 

Let $\beta\in H^k_{\et}(X_V,\Z/m)$ be a representative of $\beta_s$, where $f:X\to\Spec(\cO(S))$ is an integral affine \'etale neighborhood of $s\in\Spec(\cO(S))$ and $X_V:=X\times_{\Spec(\cO(S))}V$.  
The scheme $(X_V)_s:=X_V\times_{\Spec(\cO(S))}\Spec(\cO(S)_{s})$ is \'etale of finite type over~$\cO(S)_{s}$ (use Lemma~\ref{int}~(ii)), hence an excellent noetherian scheme by Lemma~\ref{excellent}~(ii).  
By \cite[Theorem 1.1]{Bhatt}, 
there exists a finite surjective morphism $\pi_s:Y_s\to (X_V)_s$ such that $\beta|_{Y_s}=0$ in $H^k_{\et}(Y_s,\Z/m)$ (note that Bhatt's proof simplifies in our situation, as the group scheme $\Z/m$ is \'etale, see \cite[Remark 3.3]{Bhatt}).

Our next goal is to globalize the morphism $\pi_s$.
By a limit argument based on \cite[Lemmas \href{https://stacks.math.columbia.edu/tag/01ZM}{01ZM},  \href{https://stacks.math.columbia.edu/tag/01ZO}{01ZO} and \href{https://stacks.math.columbia.edu/tag/07RR}{07RR}]{SP}, there exist an affine open neighborhood $W$ of $s$ in $\Spec(\cO(S))$ and a finite surjective morphism of finite presentation ${\pi:Y\to (X_V)\times_{\Spec(\cO(S))}W}$ such that the base change of~$\pi$ by $\Spec(\cO(S)_{s})\to W$ identifies with $\pi_s$. By \cite[Theorem \href{https://stacks.math.columbia.edu/tag/09YQ}{09YQ}]{SP}, one may assume after shrinking~$W$ that~$\beta|_Y=0$ in $H^k_{\et}(Y,\Z/m)$. After replacing~$X$ with $X\times_{\Spec(\cO(S))}W$, we may finally view $\pi$ as a finite surjective morphism of finite presentation $\pi:Y\to X_V$ such that~$\pi^*\beta=0$. As $X_V$ is integral, we may assume that so is $Y$ (after replacing it with an irreducible component that dominates $X_V$).

One may factor ${f:X\to \Spec(\cO(S))}$ as the composition $X\xrightarrow{a}\oX\xrightarrow{b}\Spec(\cO(S))$ of a quasi-compact open immersion $a$ and of a finite morphism $b$, by Zariski's Main Theorem 
\cite[Lemma \href{https://stacks.math.columbia.edu/tag/05K0}{05K0}]{SP}.
A second application of Zariski's Main Theorem allows us to factor the natural morphism $Y\to \oX$ as the composition $Y\xrightarrow{c}\oY\xrightarrow{d}\oX$ of a quasi-compact open immersion $c$ and a finite morphism $d$. After replacing $\oX$ with the closure of $X$ in $\oX$ and $\oY$ with the closure of $Y$ in $\oY$, one may assume that~$\oX$ and $\oY$ are integral. Here is a diagram summarizing the situation:
\begin{equation*}
\begin{aligned}
\xymatrix@C=1.5em@R=3ex{
Y\ar_{\pi}[d]\ar^{c}[rr]&&\oY\ar^{d}[d] \\
X_{V}\ar[r]\ar[d]&X\ar^{a}[r]\ar_(.35){\hspace{1em}f}[rd]& \oX\ar^{b}[d]\\
V\ar_{j\hspace{2em}}[rr]&&\Spec(\cO(S))\rlap{.}
}
\end{aligned}
\end{equation*}
As the natural morphism $Y\to \oY\times_{\oX}X_V$ is a proper open immersion with nonempty source and integral target, it is an isomorphism. 
Choose $x\in X^{\an}$ with $f^{\an}(x)=s$.  
As $f$ is \'etale, the map $f^{\an}$ is a local biholomorphism at $x$ (see Proposition \ref{permanencean}) so that $X^{\an}$ is normal at $x$, hence locally irreducible at $x$. 
As $(X_V)^{\an}$ is dense in~$X^{\an}$ (because $Z$ is nowhere dense in $S$), and as $\pi^{\an}$ is finite surjective and $d^{\an}$ is finite by Proposition \ref{permanencean}, the restriction $(\oY\times_{\oX}X)^{\an}\to X^{\an}$ of $d^{\an}$ is finite and surjective. One may thus choose $y\in(\oY\times_{\oX}X)^{\an}\subset (\oY)^{\an}$ such that $d^{\an}(y)=x$ and moreover, by local irreducibility of $X^{\an}$ at $x$, such that the image in $(\oX)^{\an}$ of some irreducible component $C$ of $(\oY)^{\an}$ through $y$ contains a neighborhood of $x$.

Consider the natural commutative diagram
\begin{equation}
\label{diagcdb}
\begin{aligned}
\xymatrix@C=1.5em@R=3ex{
(\RR^k j_*\Z/m)_s\ar[r]\ar[d]&(\RR^k j^{\an}_*\Z/m)_s\ar[d] \\
(\RR^k c_*\Z/m)_y\ar[r]&(\RR^k c^{\an}_*\Z/m)_y \rlap{.}
}
\end{aligned}
\end{equation}
As $\pi^*\beta=0$, the image of $\beta_s$ in $(\RR^kc_{*}\Z/m)_y$ vanishes. By commutativity of (\ref{diagcdb}), the image of $\alpha_s$ in $(\RR^kc^{\an}_{*}\Z/m)_y$ also vanishes. This exactly means that there exists an open neighborhood $\Theta'$ of $y$ in $(b^{\an}\circ d^{\an})^{-1}(U)\subset(\oY)^{\an}$ such that $\alpha|_{Y^{\an}\cap\Theta'}=0$. If $\Theta'$ is small enough to be included in $(d^{\an})^{-1}(X^{\an})$, one has the equality 
$$Y^{\an}\cap\Theta'=\Theta'\setminus((b^{\an}\circ d^{\an})^{-1}(Z)\cap\Theta').$$

To conclude, it suffices to take $R$ to be the normalization of $C\subset\oY^{\an}$, to let~$r$ be any preimage of $y$ in $R$, and to choose $\Theta$ to be the inverse image of $\Theta'$ in $R$. 
\end{proof}

\begin{prop}
\label{killloc}
Let $S$ be a Stein space,  let $U\subset S$ be open, and choose $s\in U$. Let $Z\subset S$ be a closed analytic subset. Fix $\alpha\in H^k(U\setminus (Z\cap U),\Z/m)$ with $k,m\geq 1$. Then there exist a finite surjective holomorphic map $p:T\to S$ and an open neighborhood $\Omega$ of $s$ in $U$ such that $\alpha|_{p^{-1}(\Omega\setminus(Z\cap\Omega))}=0$
 in $H^k(p^{-1}(\Omega\setminus(Z\cap\Omega)),\Z/m)$.
\end{prop}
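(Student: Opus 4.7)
The plan is to amplify Proposition~\ref{lemkillloc}, which only kills $\alpha$ in a neighborhood of a single preimage of $s$ in an auxiliary normal covering, into vanishing over a full open neighborhood of~$s$ downstairs. The bridge is the Galois closure of Lemma~\ref{Galoisclosure}: it produces a finite group $\Gamma$ acting transitively on the fibers of a suitable refinement, and the $\Gamma$-invariance of the pulled-back class then automatically propagates the local vanishing above one preimage to vanishing above the whole fiber.

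First I would reduce to the case where $S$ is connected and normal. Replacing $S$ by $S^{\mathrm{red}}$ is harmless, since both the topology and the $\Z/m$-cohomology are unchanged and $S^{\mathrm{red}} \to S$ is finite surjective. Then pass to the normalization $\nu: S^\nu \to S$, a finite surjective map with $S^\nu$ a disjoint union of connected normal Stein spaces. Since the irreducible components of a Stein space are locally finite at $s$ and $\nu$ is closed, those components of $S^\nu$ disjoint from $\nu^{-1}(s)$ will contribute nothing provided $\Omega$ is shrunk to avoid their images in $S$. It thus suffices, for each of the finitely many preimages $s^\nu$ of $s$ in $S^\nu$, to construct a finite surjective covering $T_i \to S^\nu_i$ of the connected component $S^\nu_i$ containing $s^\nu$ together with a neighborhood $V$ of $s^\nu$ on which the pullback of $\alpha$ trivializes; fiber products across the relevant components, with the identity on the irrelevant ones, then assemble into the desired $p: T \to S$.

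For such a pair $(S^\nu_i, s^\nu)$, apply Proposition~\ref{lemkillloc} to obtain a finite surjective map $q: R \to S^\nu_i$ of connected normal Stein spaces, a point $r \in R$ above $s^\nu$, and an open neighborhood $\Theta$ of $r$ in $q^{-1}(\nu^{-1}(U) \cap S^\nu_i)$ on which the pullback of $\alpha$ vanishes away from the pullback of $Z$. Then apply Lemma~\ref{Galoisclosure} to $q$ to produce $p': T_i \to R$ of connected normal Stein spaces and a finite group $\Gamma$ acting on $T_i$ over $S^\nu_i$, making $p := q \circ p' : T_i \to S^\nu_i$ a $\Gamma$-equivariant covering with $\Gamma$ transitive on every fiber. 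Set $\Theta' := (p')^{-1}(\Theta)$. Since $p \circ \gamma = p$ for every $\gamma \in \Gamma$, the pullback of $\alpha$ to $T_i$ is $\Gamma$-invariant, so its vanishing on $\Theta'$ (away from the pullback of $Z$) propagates to vanishing on $W := \bigcup_{\gamma \in \Gamma} \gamma^{-1}(\Theta')$, which contains the entire fiber $p^{-1}(s^\nu)$ by transitivity. Finally, since $p$ is finite and therefore closed, $p(T_i \setminus W)$ is closed in $S^\nu_i$ and misses $s^\nu$; any sufficiently small neighborhood $V$ of $s^\nu$ thus satisfies $p^{-1}(V) \subset W$, giving the required vanishing.

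The main obstacle is organizational rather than conceptual: assembling the individual coverings produced over the various components of $S^\nu$ and the various preimages of $s$ into a single finite surjective $p: T \to S$, and simultaneously shrinking $\Omega$ to accommodate all of them. The genuine new input is the Galois-closure symmetrization, which closes the gap between the one-preimage conclusion of Proposition~\ref{lemkillloc} and the above-$s$ conclusion required by Proposition~\ref{killloc}.
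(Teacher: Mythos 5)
Your overall strategy coincides with the paper's: reduce to $S$ connected and normal via the normalization, invoke Proposition~\ref{lemkillloc} to kill $\alpha$ near one preimage $r$ of $s$ on an auxiliary covering $q:R\to S$, and then use the Galois closure of Lemma~\ref{Galoisclosure} together with the transitivity of $\Gamma$ on the fibers of $p=q\circ p'$ to spread the vanishing over the whole fiber above $s$. The reductions and the use of $p\circ\gamma=p$ to transport the vanishing from $\Theta'=(p')^{-1}(\Theta)$ to each translate $\gamma^{-1}(\Theta')$ are all correct.

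There is, however, a genuine gap in your last step. You assert that the vanishing of $p^*\alpha$ on each $\gamma^{-1}(\Theta')$ ``propagates to vanishing on $W:=\bigcup_{\gamma\in\Gamma}\gamma^{-1}(\Theta')$,'' and you then restrict from $W$ to $p^{-1}(V)$. A cohomology class of degree $k\geq 1$ that vanishes on every member of an open cover need not vanish on the union (already for $k=1$: a generator of $H^1$ of a circle vanishes on each of two contractible arcs covering it), so the vanishing on $W$ is not justified, and the translates $\gamma^{-1}(\Theta')$ can certainly overlap. The repair is the one the paper makes: list the finitely many points $t_j$ of the fiber $p^{-1}(s)$, choose for each a $\gamma_j\in\Gamma$ with $p'\circ\gamma_j(t_j)=r$ and a neighborhood $\Omega_j$ of $t_j$ with $p'\circ\gamma_j(\Omega_j)\subset\Theta$, shrink the $\Omega_j$ so that they are \emph{pairwise disjoint}, and then take $\Omega$ (your $V$) so small that $p^{-1}(\Omega)\subset\bigsqcup_j\Omega_j$ (possible since $p$ is finite, hence proper). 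The class then vanishes on $p^{-1}(\Omega)\setminus(\cdots)$ because this set is a disjoint union of opens on each of which it vanishes. Your closedness-of-$p(T_i\setminus W)$ observation is the right mechanism for shrinking $V$, but it must be applied to a disjoint union of the $\Omega_j$, not to the overlapping union $W$.
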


\begin{proof} 
\setcounter{Step}{0}
Let $\nu:\wS\to S$ be the normalization map and let $(\ts_i)_{i\in I}$ be the preimages of $s$ by~$\nu$. Assume that the maps $p_i:T_i\to\wS$ prove the proposition for $(\wS, \nu^{-1}(U), \ts_i, \nu^{-1}(Z), \nu^*\alpha)$ using disjoint open neighborhoods $\Omega_i$ of $\ts_i$ in $\wS$. Then the fiber product $p:T\to S$ of the maps $\nu\circ p_i$ solves our original problem for any neighborhood $\Omega$ of $s$ in $S$ such that $\nu^{-1}(\Omega)\subset\cup_{i\in I}\,\Omega_i$. We may thus assume that~$S$ is normal. After replacing $S$ with its connected component containing $s$, we may also suppose that it is connected.

Choose $q:R\to S$, a point $r\in R$ and $\Theta\subset q^{-1}(U)$ as in Proposition \ref{lemkillloc}.
Let $p':T\to R$ and $\Gamma$ be given by Lemma~\ref{Galoisclosure}, and set $p:=q\circ p'$.
Let $(t_j)_{j\in J}$ be the preimages of~$s$ by $p$. As $p'$ is surjective and $\Gamma$ acts transitively on the fibers of~$p$, one can find, for each $j\in J$, an element $\gamma_j\in \Gamma$ such that $p'\circ\gamma_j(t_j)=r$. Let $\Omega_j$ be a neighborhood of $t_j$ in $T$ such that $p'\circ\gamma_j(\Omega_j)\subset \Theta$. After shrinking the $\Omega_j$, we may assume that they are disjoint. Choosing $\Omega$ to be a neighborhood of $s$ in $U$ such that $p^{-1}(\Omega)\subset\cup_{j\in J}\,\Omega_j$ concludes the proof.
\end{proof}

\subsection{Killing ramification globally}
\label{parkill2}

We finally globalize Proposition~\ref{killloc} thanks to the \v{C}ech-to-derived spectral sequence.

\begin{prop}
\label{thkillram}
Let $S$ be a Stein space,  let $K\subset S$ be a compact subset and let~$U$ be an open neighborhood of $K$ in $S$. Let $Z\subset S$ be a closed analytic subset, set $V:=S\setminus Z$, and fix $\alpha\in H^k(V\cap U,\Z/m)$ for some $k,m\geq 1$.  Then, after maybe shrinking $U$,  there exist a finite surjective holomorphic map $p:T\to S$ and a class ${\beta\in H^k(p^{-1}(U),\Z/m)}$ with $\alpha|_{p^{-1}(V\cap U)}=\beta|_{p^{-1}(V\cap U)}$  in $H^k(p^{-1}(V\cap U),\Z/m)$.
\end{prop}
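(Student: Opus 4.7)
My plan is to combine Proposition~\ref{killloc} with compactness of $K$ and a \v{C}ech-theoretic gluing argument, proceeding by induction on the cohomological degree~$k$.

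First, for each $s\in K$, Proposition~\ref{killloc} yields a finite surjective holomorphic map $p_s\colon T_s\to S$ and an open neighborhood $\Omega_s\subset U$ of $s$ on which $\alpha$ pulls back to zero on the intersection with~$V$. By compactness of~$K$, I extract a finite subcover $\Omega_1,\dots,\Omega_r$ of a neighborhood of~$K$, and taking the fiber product of the $T_{s_i}$ over~$S$, then normalizing and passing (if needed) to a Galois closure \`a la Lemma~\ref{Galoisclosure}, I obtain a single finite surjective $p\colon T\to S$ such that $\alpha|_{W_i\cap p^{-1}(V)}=0$ for every~$i$, where $W_i:=p^{-1}(\Omega_i)$. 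After shrinking~$U$, I may assume $U=\bigcup_{i=1}^r\Omega_i$.

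Next, I compare the \v{C}ech-to-derived spectral sequences
\[
\check H^p\bigl(\{W_i\},\cH^q(\Z/m)\bigr)\Longrightarrow H^{p+q}\bigl(p^{-1}(U),\Z/m\bigr)
\]
and its counterpart for the cover $\{W_i\cap p^{-1}(V)\}$ of $p^{-1}(V\cap U)$, linked by the restriction morphism. The hypothesis on~$\alpha$ forces its image in $\check H^0(\cdots,\cH^k)$ to vanish, so $\alpha$ is represented by \v{C}ech cochains of positive degree~$p$ with values in $\cH^{k-p}(\Z/m)$: that is, by cohomology classes of strictly lower degree on higher intersections of the restricted cover.

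I would then induct on $k\geq 1$. The base case $k=1$ follows after normalizing $T$: the complement $W\setminus p^{-1}(Z)$ remains connected for any connected open $W\subset T$ by \cite[9,~\S1.2]{GRCoherent}, so locally constant $\Z/m$-valued transition functions on double intersections $W_{ij}\cap p^{-1}(V)$ extend uniquely to $W_{ij}$, automatically compatibly on triple intersections, yielding the desired torsor representing $\beta$. For $k\geq 2$, shrink each $\Omega_i$ to a relatively compact $\Omega'_i\Subset\Omega_i$ still covering a neighborhood of~$K$; the induction hypothesis, applied on the Stein space $T$ to each compact $p^{-1}(\overline{\Omega'_{i_0\cdots i_p}})$ and to the \v{C}ech cochains representing $\alpha$, furnishes the required lifts after a single further finite surjective cover of~$T$ obtained by iterated fiber product.

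The main obstacle will be to guarantee that these lifts satisfy the \v{C}ech cocycle relations on the unrestricted cover: their \v{C}ech differentials vanish over $p^{-1}(V)$ (because the originals were cocycles there) but a priori only there, so the defect is a collection of classes of degree $k-p-1$ on $(p+2)$\nobreakdash-fold intersections restricting to zero over~$V$. Killing these defects requires yet another application of the induction hypothesis in still lower degree, generating a cascade which terminates after at most $r$ rounds since the \v{C}ech complex has length~$\leq r$; all the intermediate finite covers are then amalgamated into the final $p\colon T\to S$ via iterated fiber products.
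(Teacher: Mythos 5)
Your first step (making $\alpha$ locally trivial on a finite cover of a neighborhood of $K$ via Proposition~\ref{killloc} and a fiber product) is sound, and your base case $k=1$ is essentially the paper's final step: once the class is a \v{C}ech cocycle with values in the \emph{constant} presheaf $\cH^0=\Z/m$, normality of $T$ and the fact that $p^{-1}(Z)$ is nowhere dense give $\Z/m\isoto j_*j^*\Z/m$, so the cocycle extends across $Z$ with no compatibility issues. The gap is in the induction step for $k\geq 2$, precisely at the "main obstacle" you flag. First, the defect $\delta\beta$ of your lifted $p$-cochain with values in $\cH^{k-p}$ is a $(p+1)$-cochain with values in $\cH^{k-p}$ --- the same coefficient degree, not $k-p-1$ --- so there is no descending cascade that terminates. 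Second, and more seriously, the defect classes live in $\Ker\big(H^{k-p}(W'_{J'},\Z/m)\to H^{k-p}(W'_{J'}\cap p^{-1}(V),\Z/m)\big)$: they vanish \emph{on} $V$ but not a priori on all of $W'_{J'}$. The induction hypothesis (Proposition~\ref{thkillram} in lower degree) only extends classes \emph{from} $V$ after a finite cover; it gives no tool for killing classes supported away from $V$, so it cannot absorb these defects. Finally, even with an honest cocycle on the unrestricted cover one would still have to control the higher differentials of the \v{C}ech-to-derived spectral sequence and the indeterminacy in lifting a class from $E_\infty^{p,k-p}$ to $F^pH^k$ before concluding that the glued class restricts to $\alpha$.

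The paper sidesteps all of this by never extending cochains across $Z$ at an intermediate filtration level. Instead it iterates your first step \emph{inside} the spectral sequence of the cover of $V\cap U$: if $\alpha\in F^lH^k$ with $l<k$, it is detected by a cocycle $(\gamma_J)$ with $\gamma_J\in H^{k-l}(V\cap U_J,\Z/m)$, and Proposition~\ref{killloc} (not the proposition being proved) kills each $\gamma_J$ locally near each point of $K$ after a further finite cover; refining the cover and pulling back strictly increases~$l$. After finitely many rounds $l=k$, the class is a constant-coefficient \v{C}ech class, and only then is the (now trivial) extension across $Z$ performed. If you reorganize your induction along these lines --- raising the filtration level by killing, rather than lifting cochains level by level --- the cocycle-defect problem disappears.
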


\begin{proof}
Let $\cH^s$ be the presheaf on $S$ defined by $\cH^s(\Omega):=H^s(\Omega,\Z/m)$ for $\Omega\subset S$ open.  Note that the presheaf $\cH^0$ is the constant sheaf $\Z/m$. Let $\cU=(U_i)_{i\in I}$ be a finite open covering of $U$ and consider the \v{C}ech-to-derived spectral sequence
$$E_2^{r,s}=\check{H}^r(V\cap\cU,\cH^s)\implies H^{r+s}(V\cap U,\Z/m)$$
 associated with the open covering $V\cap\cU=(V\cap U_i)_{i\in I}$ of $V\cap U$ (\cite[Lemma~\href{https://stacks.math.columbia.edu/tag/03AZ}{03AZ}]{SP}, see \cite[Definition \href{https://stacks.math.columbia.edu/tag/03AM}{03AM}]{SP} for the definition of \v{C}ech cohomology). Let $F^{\bullet}$ be the filtration on its abutment. Let $l\geq 0$ be maximal with $\alpha\in F^{l}H^k(V\cap U,\Z/m)$.

 Assume first that $l<k$.  If $J=(i_0,\dots,i_{l})\in I^{l+1}$, write $U_J:=\cap_{t=0}^{l}U_{i_t}$. Then~$\alpha$ is induced by a class in $\check{H}^{l}(V\cap\cU,\cH^{k-l})$,  represented by a cocycle $(\gamma_J)_{J\in I^{l+1}}$, where 
$$\gamma_J\in \cH^{k-l}(V\cap U_J)=H^{k-l}(V\cap U_J,\Z/m).$$
 For each $s\in K$ and $J\in I^{l+1}$,  one can apply Proposition \ref{killloc} to find an open neighborhood $\Omega_{s,J}$ of $s$ in $U_J$ and a finite surjective holomorphic map ${p_{s,J}:T_{s,J}\to S}$ with $\gamma_J|_{p_{s,J}^{-1}(V\cap\Omega_{s,J})}=0$. Intersecting the $\Omega_{s,J}$ and taking the fiber product of the $p_{s,J}:T_{s,J}\to S$ over $S$ for varying $J$ yields an open neighborhood~$\Omega_s$ of~$s$ in~$U$ and a finite surjective holomorphic map $p_s:T_s\to S$.  Extract from $(\Omega_s)_{s\in K}$ a finite covering of $K$ (which we view, after shrinking $U$, as a finite covering~$\cU'$ of~$U$ which refines $\cU$) and let $p:T\to S$ be the fiber product over~$S$ of the corresponding maps~$p_{s}:T_{s}\to S$. Our choices ensure that the image  in $\check{H}^{l}(p^{-1}(V)\cap p^{-1}(\cU'),\cH^{k-l})$ of $[\gamma_J]\in \check{H}^{l}(V\cap\cU,\cH^{k-l})$ is represented by the zero cocycle and hence vanishes.  As a consequence, after replacing $S$, $K$, $V$ and $\cU$ with~$T$, ~$p^{-1}(K)$,~$p^{-1}(V)$ and~$p^{-1}(\cU')$, we have managed to increase the value of $l$.

Repeating this procedure finitely many times, we may assume that $l=k$, hence that 
$\alpha$ is induced by a class $\talpha\in\check{H}^k(V\cap\cU,\cH^0)=\check{H}^k(V\cap\cU,\Z/m)$. 
We may suppose that $S$ is normal and connected (replace it with a connected component of its normalization) and that $Z$ is nowhere dense in $S$ (otherwise $Z=S$ and $\alpha=0$). Then, if $j:V\hookrightarrow S$ is the inclusion morphism, the adjunction morphism $\Z/m\to j_*j^*\Z/m$ is an isomorphism (use \cite[9, \S 1.2, Theorem vii) $\Leftrightarrow$ viii)]{GRCoherent}).
As a consequence, the restriction map $\check{H}^k(\cU,\Z/m)\to\check{H}^k(V\cap\cU,\Z/m)$ is an isomorphism, and we let $\tbeta\in\check{H}^k(\cU,\Z/m)$ be the inverse image of $\talpha$. 
The image $\beta\in H^k(U,\Z/m)$ of $\tbeta$ by the natural morphism $\check{H}^k(\cU,\Z/m)\to H^k(U,\Z/m)$ then has the required property that $\beta|_{V\cap U}=\alpha$ in $H^k(V\cap U,\Z/m)$.
\end{proof}

\section{An absolute comparison theorem}
\label{parcomp}

After fixing in \S\ref{parqfh} our conventions concerning the Grothendieck topologies that we will use, we prove our main comparison theorem (Theorem \ref{Artincompcx}) in \S\ref{parArtincompact}, we extend it to the $\Gal(\C/\R)$\nobreakdash-equivariant setting in \S\ref{Geqpar} and \S\ref{Geqpar2}, and we give applications to cohomological dimension bounds in \S\ref{cdpar}.

\subsection{The qfh topology}
\label{parqfh}

Our proof of Theorem~\ref{Artincompcx} requires the use of a Grothen\-dieck topology for schemes that is finer than the \'etale topology.
The definition that we use is the one given in \cite[\S 6.6]{Chough}. 
To be precise, we say that a family $(f_i:Y_i\to X)_{i\in I}$ of morphisms of schemes that are locally quasi-finite and locally of finite presentation is a \textit{qfh covering}
of $X$ if for any affine open subset $U\subset X$, the induced family $(Y_i\times_X U\to U)_{i\in I}$ can be refined by a family $(g_j:Z_j\to U)_{j\in J}$, where~$g_j=g|_{Z_j}$ for some proper surjective morphism $g:Z\to X$ and some affine covering $(Z_j)_{j\in J}$ of $Z$.  As explained in \loccit, it coincides with Voevodsky's \cite[Definition 3.1.2]{Voehomo} in the noetherian case.

Let $S$ be a Stein space, and let $X$ be a separated $\cO(S)$-scheme 
of finite type. We will consider the following commutative diagram of sites:
\begin{equation}
\label{diagramofsites}
\begin{aligned}
\xymatrix@C=1.5em@R=3ex{
(X^{\an})_{\qc}\ar^{}[r]\ar^{\zeta}[d]&(X^{\an})'_{\cl}\ar^{\varepsilon}[d] \\
X_{\qfh}\ar[r]& (X_{\et})'.
}
\end{aligned}
\end{equation}

The site $(X_{\et})'$ is the variant of the site $X_{\et}$ of (\ref{carredesites}) where one restricts the objects to those \'etale $X$-schemes that are separated and of finite type over $\cO(S)$, and where one only considers coverings involving finitely many arrows. 
As objects of~$X_{\et}$ admit finite Zariski coverings by affine hence separated schemes, the topoi associated with~$(X_{\et})'$ and~$X_{\et}$ are equivalent by \cite[Lemma \href{https://stacks.math.columbia.edu/tag/03A0}{03A0}]{SP}.  We may thus use them interchangeably for cohomological computations. 

The site $(X^{\an})'_{\cl}$ is the variant of the site $(X^{\an})_{\cl}$ of (\ref{carredesites}) where one restricts the objects to those topological spaces endowed with a local homeomorphism to~$X^{\an}$ that are Hausdorff (but we impose no further constraints on coverings). As objects of~$(X^{\an})_{\cl}$ admit open coverings by Hausdorff spaces, the topoi associated with~$(X^{\an})'_{\cl}$ and~$(X^{\an})_{\cl}$ are equivalent, again by \cite[Lemma \href{https://stacks.math.columbia.edu/tag/03A0}{03A0}]{SP}. 

The site $X_{\qfh}$ has as objects those $X$-schemes that are separated and of finite type over $\cO(S)$, and is endowed with the $\qfh$ topology \cite[\S 6.6]{Chough}, restricting to coverings involving only finitely many arrows. 

The site $(X^{\an})_{\qc}$ has as objects the Hausdorff and locally compact topological spaces over $X^{\an}$ and is endowed with the $\qc$ topology \cite[Definition~\href{https://stacks.math.columbia.edu/tag/09X0}{09X0}]{SP}. 

The morphisms of sites in (\ref{diagramofsites}) are the obvious ones (\'etale coverings are $\qfh$ coverings by \cite[Lemma \href{https://stacks.math.columbia.edu/tag/0ETV}{0ETV}]{SP} and analytifications of $\qfh$ coverings are $\qc$ coverings as a consequence of \cite[Lemma~\href{https://stacks.math.columbia.edu/tag/09X5}{09X5}]{SP}).

\subsection{Artin's comparison theorem over Stein compact sets}
\label{parArtincompact}

We may now state and prove the main theorem of this article.

\begin{thm}
\label{Artincompcx}
Let $S$ be a Stein space. Let ${f:X\to\Spec(\cO(S))}$ be an $\cO(S)$\nobreakdash-scheme interiorly of finite type.
Fix a torsion abelian sheaf $\L$ on $X_{\et}$. Assume that $f$ is interiorly proper or that $\L$ is interiorly constructible. 
If one lets $U$ run over all Stein open neighborhoods of a Stein compact subset $K$ of $S$, the base change morphisms
\begin{equation}
\label{compaU}
\underset{K\subset U}{\colim}\,\,H^k_{\et}(X_{\cO(U)},\L_{\cO(U)})\to\underset{K\subset U}{\colim}\,\,H^k((X_{\cO(U)})^{\an},\L^{\an})
\end{equation}
are isomorphisms for $k\geq 0$.
\end{thm}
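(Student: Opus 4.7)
The strategy, indicated in the introduction, is to use Theorem~\ref{relArtin} to reduce to the case $X=\Spec(\cO(S))$ with $\L=\Z/m$, and then to compare \'etale and singular cohomology through Voevodsky's qfh topology, whose Leray spectral sequence degenerates thanks to Propositions~\ref{thkillram} and~\ref{killunram}.

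First I would perform a sequence of reductions. The Leray spectral sequences for $f:X\to\Spec(\cO(S))$ on the algebraic side and for $f^{\an}$ on the analytic side, combined with Theorem~\ref{relArtin} (which matches $(R^qf_*\L)^{\an}$ with $R^qf^{\an}_*\L^{\an}$), reduce us to the case $X=\Spec(\cO(S))$ with coefficients in the interiorly constructible sheaves $R^qf_*\L$ (cf.\ Lemma~\ref{constr}). A d\'evissage of a constructible sheaf into a complex whose terms are of the form $\pi_*\M$ with $\pi$ finite and $\M$ locally constant constructible, exactly as in Step~\ref{step3} of the proof of Theorem~\ref{relArtin}, combined with another use of Theorem~\ref{relArtin} for the proper morphism $\pi$, further reduces us to the case where $\L=\Z/m$ is constant on an \'etale separated $\cO(S)$-scheme of finite type.

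In this setting I would consider the Leray spectral sequence for the morphism of sites $\zeta$ of diagram~(\ref{diagramofsites}):
\begin{equation*}
E_2^{p,q}=H^p_{\qfh}(X,R^q\zeta_*\Z/m)\Longrightarrow H^{p+q}((X^{\an})_{\qc},\Z/m).
\end{equation*}
Voevodsky's comparison between qfh and \'etale cohomology with torsion coefficients, together with the standard identification of qc and classical cohomology on Hausdorff locally compact spaces, identifies the abutment with singular cohomology and the base with \'etale cohomology. It therefore suffices to show that $\colim_{K\subset U}R^q\zeta_*\Z/m$ vanishes in positive degrees on stalks over $K$; equivalently, that any class $\alpha\in H^q(Y^{\an},\Z/m)$ with $Y$ \'etale separated of finite type over $X_{\cO(U)}$ and $q\geq 1$ is killed, after possibly shrinking $U$, by some qfh covering of $Y$.

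The main obstacle is precisely this trivialization, and it combines the two analytic killing propositions. Given such a class $\alpha$, Zariski's Main Theorem expresses $Y$ as an open subscheme of a finite $\cO(U)$-scheme, so $Y^{\an}$ becomes an open subset of a finite analytic covering $R$ of a neighborhood of $K$, with $\alpha$ defined on the complement of a closed analytic subset $Z\subset R$. Proposition~\ref{thkillram} produces a finite surjective holomorphic map $p:T\to R$ along which $p^*\alpha$ extends across $p^{-1}(Z)$, that is, becomes unramified in a neighborhood of $p^{-1}(K)$; Proposition~\ref{killunram} is then applied to kill this unramified extension outright via a further finite surjective holomorphic map. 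Via Lemma~\ref{finiteanal}, the composite corresponds to a finite surjective morphism of affine $\cO(S)$-schemes over $X$, which is a qfh covering of $Y$. It is essential that the qfh topology be used rather than the \'etale one, since the coverings thus produced are generically ramified. With the vanishing of higher direct images under $\zeta$ in hand, the Leray spectral sequence degenerates in the colimit over $U$, yielding the theorem.
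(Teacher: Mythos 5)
Your route is the paper's own: the same reductions via Theorem~\ref{relArtin} and the d\'evissage of constructible sheaves to reach $X=\Spec(\cO(S))$ and $\L=\Z/m$, the same Leray spectral sequence for the morphism of sites $\zeta:(X^{\an})_{\qc}\to X_{\qfh}$ of diagram~(\ref{diagramofsites}), and the same use of Zariski's Main Theorem, Proposition~\ref{thkillram}, Proposition~\ref{killunram} and Lemma~\ref{finiteanal} to show that $\RR^q\zeta_*\Z/m$ dies in the colimit over $U$ for $q>0$.

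There is, however, one genuine gap. The $E_2^{p,0}$ term of your spectral sequence is $H^p((X_U)_{\qfh},\zeta_*\Z/m)$, not $H^p((X_U)_{\qfh},\Z/m)$, and your sentence identifying ``the base with \'etale cohomology'' silently conflates the two. The sheaf $\zeta_*\Z/m$ assigns to a separated $\cO(U)$-scheme $Y$ of finite type over $X_U$ the group $H^0(Y^{\an},\Z/m)$, whereas the constant qfh sheaf assigns $H^0_{\et}(Y,\Z/m)$; these can differ, since a connected affine $\cO(U)$-scheme may have a disconnected analytification. Proving that the adjunction morphism $\Z/m\to\zeta_*\Z/m$ induces an isomorphism on qfh cohomology after passing to the colimit over $U$ is precisely Step~\ref{step4bis} of the paper's proof, and it is not formal: it rests on the relative GAGA-type comparison \cite[(7.2)]{Bingener} of $\underset{K\subset U}{\colim}\,H^0_{\et}(Y_U,\Z/m)$ with $\underset{K\subset U}{\colim}\,H^0((Y_U)^{\an},\Z/m)$, fed into the hypercovering description of qfh cohomology (which is why the sites in~(\ref{diagramofsites}) are restricted to coverings with finitely many arrows). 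The introduction of the paper explicitly flags this as the point where Bingener's relative GAGA theorem enters. Without this step, the degeneration you obtain only expresses $\underset{K\subset U}{\colim}\,H^k((X_U^{\an})_{\qc},\Z/m)$ in terms of $\underset{K\subset U}{\colim}\,H^k((X_U)_{\qfh},\zeta_*\Z/m)$, and the link with \'etale cohomology is not yet established. (A smaller omission: to apply Proposition~\ref{killunram} one must arrange that the relevant compact subset of the covering space is $\cO$-convex, which the paper does via Lemma~\ref{neighneigh} and Proposition~\ref{propfinitehull}~(i).)
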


\begin{proof}
\setcounter{Step}{0}
We split the proof in several steps.

\begin{Step}
\label{step1bis}
We first reduce to the case where $X=\Spec(\cO(S))$.
\end{Step}

By Lemma \ref{semianal} (ii), the morphism (\ref{compaU}) identifies with the natural morphism
\begin{equation}
\label{compaL}
\underset{K\subset L}{\colim}\,\,H^k_{\et}(X_{\cO(L)},\L_{\cO(L)})\to\underset{K\subset L}{\colim}\,\,H^k((X_{\cO(L)})^{\an},\L^{\an}),
\end{equation}
where $L$ runs over all excellent Stein compact neighborhoods of $K$ in $S$, and
 $(X_{\cO(L)})^{\an}:=(f^{\an})^{-1}(L)$.
Both sides of (\ref{compaU}) or equivalently (\ref{compaL}) are computed by (colimits of) Leray spectral sequences, whose $E_2^{p,q}$ terms read respectively
$$\underset{K\subset L}{\colim}\,\,H^p_{\et}(\Spec(\cO(L)),(\RR^qf_*\L)_{\cO(L)})\textrm{ \,and \,}\underset{K\subset U}{\colim}\,\,H^p(U,(\RR^qf_*\L)^{\an}),$$
where we used Lemma \ref{cdbK} and Theorem \ref{relArtin} applied with ${Y=\Spec(\cO(S))}$. These terms are isomorphic by the $X=\Spec(\cO(S))$ case  of Theorem \ref{Artincompcx} applied to the sheaves $\RR^qf_*\L$.

\begin{Step}
\label{step2bis}
We further reduce to the case where $\L=\Z/m$ for some $m\geq 1$.
\end{Step}

The sheaf $\L$ is a filtered colimit of constructible sheaves by \cite[Lemma \href{https://stacks.math.columbia.edu/tag/03SA}{03SA}~(2)]{SP}.  In addition, if $L$ is an excellent Stein compact neighborhood of $K$ in $S$, both the \'etale cohomology of $\Spec(\cO(L))$ and the sheaf cohomology of $L$ commute with filtered colimits (see \cite[Theorem \href{https://stacks.math.columbia.edu/tag/09YQ}{09YQ}]{SP} and \cite[III, Theorem 5.1]{Iversen}). Consequently,  in view of the description (\ref{compaL}) of the morphism (\ref{compaU}), we may assume that~$\L$ is constructible.

By \cite[Lemmas \href{https://stacks.math.columbia.edu/tag/09Z7}{09Z7}, \href{https://stacks.math.columbia.edu/tag/095R}{095R} and \href{https://stacks.math.columbia.edu/tag/03RX}{03RX}]{SP}, the sheaf $\L$ has a resolution
\begin{equation}
\label{resolution2}
0\to\L\to\L_0\to\L_1\to\dots
\end{equation}
by constructible sheaves $\L_p$ that are finite products of sheaves of the form $\pi_*\M$ with $\pi:Z\to X$ finite of finite presentation and~$\M$ constant constructible on $Z_{\et}$.  Making use of the spectral sequences
\begin{alignat}{4}
&E_1^{p,q}=\underset{K\subset U}{\colim}\,\,H^q_{\et}(X_{\cO(U)},(\L_p)_{\cO(U)})&\implies& \underset{K\subset U}{\colim}\,\,H^{p+q}_{\et}(X_{\cO(U)},\L_{\cO(U)})\textrm{\hspace{1em} and }\nonumber\\
&E_1^{p,q}=\underset{K\subset U}{\colim}\,\,H^q((X_{\cO(U)})^{\an},\L_p^{\an})&\implies& \underset{K\subset U}{\colim}\,\,H^{p+q}((X_{\cO(U)})^{\an},\L^{\an})\nonumber
\end{alignat}
associated with the resolution (\ref{resolution2}), we reduce to the case where $\L$ is of the form~$\pi_*\M$. 
As the higher direct images of the morphisms induced by $\pi$ and $\pi^{\an}$ vanish by \cite[Proposition~\href{https://stacks.math.columbia.edu/tag/03QP}{03QP}]{SP} and \cite[III, Theorem 6.2]{Iversen}, and since one has ${\pi_*(\M)^{\an}\isoto\pi^{\an}_*(\M^{\an})}$ by Theorem \ref{relArtin},  we may further assume that $\L$ is constant (after replacing $S$ with~$Z^{\an}$ and~$\L$ with~$\M$), and then that $\L\simeq \Z/m$.

\begin{Step}
\label{step3bis}
We finally assume that $X=\Spec(\cO(S))$ and $\L=\Z/m$ for some $m\geq 1$.
\end{Step}

For any Stein open neighborhood $U$ of $K$ in $S$ (\resp any excellent Stein compact neighborhood $L$ of $K$ in $S$), write $X_U:=\Spec(\cO(U))$ (\resp $X_L:=\Spec(\cO(L))$). 
Our goal is to show that the morphisms
\begin{equation}
\label{goal}
\underset{K\subset U}{\colim}\,\,H^k_{\et}(X_U,\Z/m)\xrightarrow{\varepsilon^*}\underset{K\subset U}{\colim}\,\,H^k(X_U^{\an},\Z/m)
\end{equation}
are isomorphisms.  For any excellent Stein compact neighborhood $L$ of $K$ in $S$, the change of topology morphisms $H^k_{\et}(X_L,\Z/m)\to H^k((X_L)_{\qfh},\Z/m)$ are isomorphisms by \cite[Theorem 3.4.4]{Voehomo}. Taking the colimit over all such $L$ and using Lemma \ref{semianal} (ii) shows that 
$\underset{K\subset U}{\colim}\,\,H^k_{\et}(X_U,\Z/m)\to\underset{K\subset U}{\colim}\,\,H^k((X_U)_{\qfh},\Z/m)$
is an isomorphism.  As the pull-back morphisms $H^k(X_U^{\an},\Z/m)\to H^k((X_U^{\an})_{\qc},\Z/m)$ are also isomorphisms by \cite[Lemma \href{https://stacks.math.columbia.edu/tag/09X4}{09X4}]{SP}, it follows from diagram (\ref{diagramofsites}) that~(\ref{goal}) may be identified with the morphism
\begin{equation}
\label{reformulation}
\underset{K\subset U}{\colim}\,\,H^k((X_U)_{\qfh},\Z/m)\xrightarrow{\zeta^*}\underset{K\subset U}{\colim}\,\,H^k((X_U^{\an})_{\qc},\Z/m).
\end{equation}
The colimit over $U$ of the Leray spectral sequences of $\zeta$ reads
\begin{equation}
\label{colimitss}
E_2^{p,q}=\underset{K\subset U}{\colim}\,\,H^p((X_U)_{\qfh},\RR^q\zeta_*\Z/m)\implies \underset{K\subset U}{\colim}\,\,H^{p+q}(U_{\qc},\Z/m).
\end{equation}
We will prove that the adjunction morphism $\underset{K\subset U}{\colim}\,\,H^p((X_U)_{\qfh},\Z/m)\to E_2^{p,0}$ is an isomorphism in Step \ref{step4bis} and that $E_2^{p,q}=0$ for $q>0$ in Step \ref{step5bis}.  It follows that the spectral sequence (\ref{colimitss}) degenerates and that (\ref{reformulation}) hence (\ref{goal}) are isomorphisms.

\begin{Step}
\label{step4bis}
One has $\underset{K\subset U}{\colim}\,\,H^p((X_U)_{\qfh},\Z/m)\isoto\underset{K\subset U}{\colim}\,\,H^p((X_U)_{\qfh},\zeta_*\Z/m)$.
\end{Step}

 Let $U_0$ be a fixed Stein open neighborhood of $K$ in $S$ and let $Y$ be a separated $\cO(U_0)$\nobreakdash-scheme of finite type.  For any Stein open neighborhood $U$ of $K$ in $U_0$, write $Y_U:=Y\times_{X_{U_0}}X_{U}$. Consider the adjunction morphism
\begin{equation}
\label{isoBing1}
\underset{K\subset U\subset U_0}{\colim}\,\,H^0((Y_U)_{\qfh},\Z/m)\to\underset{K\subset U\subset U_0}{\colim}\,\,H^0((Y_U)_{\qfh},\zeta_*\Z/m).
\end{equation}
As the \'etale sheaf $\Z/m$ is already a $\qfh$ sheaf (see \cite[Lemma \href{https://stacks.math.columbia.edu/tag/0EW8}{0EW8}]{SP}),
the left-hand side of (\ref{isoBing1}) is isomorphic to $\underset{K\subset U\subset U_0}{\colim}\,\,H^0_{\et}(Y_U,\Z/m)$. In addition, as the constant sheaf~$\Z/m$ for the usual topology is already a $\qc$ sheaf (see \cite[Lemma \href{https://stacks.math.columbia.edu/tag/09X3}{09X3}]{SP}),  the right-hand side of (\ref{isoBing1}) is isomorphic to $\underset{K\subset U\subset U_0}{\colim}\,\,H^0((Y_U)^{\an},\Z/m)$. All in all, the morphism (\ref{isoBing1}) may be identified with 
\begin{equation*}
\label{isoBing}
\underset{K\subset U\subset U_0}{\colim}\,\,H_{\et}^0(Y_U,\Z/m)\to\underset{K\subset U\subset U_0}{\colim}\,\,H^0((Y_U)^{\an},\Z/m),
\end{equation*}
and hence is an isomorphism by \cite[(7.2)]{Bingener}.

 The computation of the cohomology of the site $(X_U)_{\qfh}$ using hypercoverings \cite[Proposition \href{https://stacks.math.columbia.edu/tag/09VZ}{09VZ}]{SP}, the fact that any given $\qfh$ hypercovering of $X_U$ involves only finitely many separated $\cO(U)$-schemes of finite type at each simplicial level (as coverings in $(X_U)_{\qfh}$ involve only finitely many arrows) and the fact that~(\ref{isoBing1}) is an isomorphism for such schemes, together imply that the natural morphism 
$$\underset{K\subset U}{\colim}\,\,H^p((X_U)_{\qfh},\Z/m)\to\underset{K\subset U}{\colim}\,\,H^p((X_U)_{\qfh},\zeta_*\Z/m)$$
is an isomorphism, which is what we wanted to prove.

\begin{Step}
\label{step5bis}
One has $\underset{K\subset U}{\colim}\,\,H^p((X_U)_{\qfh},\RR^q\zeta_*\Z/m)=0$ for $q>0$.
\end{Step}

Fix $q>0$. We claim that for any Stein open neighborhood $U_0$ of $K$ in $S$,  and any separated quasi-finite $\cO(U_0)$-scheme of finite presentation $Y$,  the group 
\begin{equation}
\label{claim0}
\underset{K\subset U\subset U_0}{\colim}\,\,H^0((Y_U)_{\qfh},\RR^q\zeta_*\Z/m)
\end{equation}
vanishes. Taking this claim for granted, the computation of the cohomology of the site $(X_U)_{\qfh}$ using hypercoverings \cite[Proposition \href{https://stacks.math.columbia.edu/tag/09VZ}{09VZ}]{SP}, the fact that any given $\qfh$ hypercovering of $X_U$ involves only finitely many separated quasi-finite $\cO(U)$\nobreakdash-schemes of finite presentation at each simplicial level, and the fact that~(\ref{claim0}) vanishes for such schemes, together imply that $\underset{K\subset U}{\colim}\,\,H^p((X_U)_{\qfh},\RR^q\zeta_*\Z/m)=0$.

We now prove the claim.  Using Lemma \ref{neighneigh}, we may assume that $K$ admits a basis $(U_i)_{i\in I}$ of Stein open neighborhoods such that $K$ is $\cO(U_i)$-convex for all~${i\in I}$.  
We may thus restrict the colimit (\ref{claim0}) to such neighborhoods.
Fix a Stein open neighborhood $U$ of $K$ in $U_0$ 
such that $K$ is $\cO(U)$-convex 
and choose a class $\alpha$ in $H^0((Y_U)_{\qfh},\RR^q\zeta_*\Z/m)$. There exists a $\qfh$ covering of $Y_U$, which we may choose to be given by a single morphism $W\to Y_U$, such that $\alpha$ is induced by a class $\beta\in H^q(W^{\an},\Z/m)$. By Zariski's Main Theorem (see \cite[Lemma \href{https://stacks.math.columbia.edu/tag/05K0}{05K0}]{SP}), there exists a factorization $W\xrightarrow{j}\oW\xrightarrow{\pi} Y_U$ of $W\to Y_U$ where~$j$ is an open immersion and $\pi$ is finite. 

By Proposition \ref{thkillram} applied to the Stein space~$(\oW)^{\an}$, to the Stein compact subset $(\pi^{\an})^{-1}(K)$, and to the cohomology class~$\beta$, there exist, after maybe shrinking $U$, a finite surjective holomorphic map $p:T\to(\oW)^{\an}$ and a class $\gamma\in H^q(T,\Z/m)$ with $\gamma|_{p^{-1}(W^{\an})}=\beta|_{p^{-1}(W^{\an})}$.  As $K$ is $\cO(U)$-convex,  the compact subset $(\pi^{\an}\circ p)^{-1}(K)$ is $\cO(T)$-convex (see Proposition \ref{propfinitehull} (i)). One can thus apply Proposition~\ref{killunram} to ensure, after modifying $p:T\to (\oW)^{\an}$ and further shrinking~$U$,  that $\gamma=0$ and hence that $\beta|_{p^{-1}(W^{\an})}=0$. 

By Lemma \ref{finiteanal},  the scheme $\Spec(\cO(T))$ is interiorly finite with analytification isomorphic to $T$ (both when viewed as a $\oW$-scheme or as an $\cO(U)$\nobreakdash-scheme), and the structural morphism $g:\Spec(\cO(T))\to\oW$ satisfies $g^{\an}=p$.  Using Lemma~\ref{int} (iii), one may assume after shrinking $U$ that $g$ is finite of finite presentation. 
 It follows that $g|_{g^{-1}(W)}: g^{-1}(W)\to W$ is a qfh covering. As $\beta|_{g^{-1}(W)^{\an}}=0$, we conclude that $\alpha\in H^0((Y_U)_{\qfh},\RR^q\zeta_*\Z/m)$ is qfh-locally trivial and hence trivial.
\end{proof}

When $f$ is interiorly proper, the statement of Theorem \ref{Artincompcx} takes a simpler form.

\begin{cor}
Let $K$ be a Stein compact subset of a Stein space $S$ and let $X$ be an interiorly proper $\cO(S)$\nobreakdash-scheme.
Fix a torsion abelian sheaf $\L$ on $X_{\et}$. For ${k\geq 0}$, there are canonical isomorphisms
$H^k_{\et}(X_{\cO(K)},\L|_{X_{\cO(K)}})\to H^k((X_{\cO(K)})^{\an},\L^{\an})$.
\end{cor}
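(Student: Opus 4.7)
The plan is to deduce this corollary directly from Theorem \ref{Artincompcx} by identifying each colimit appearing there with the corresponding cohomology at $K$ itself. When $f$ is interiorly proper, both sides of the theorem are continuous in $K$ in the appropriate sense, and it suffices to verify this continuity separately on each side.

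On the analytic side, Proposition \ref{permanencean} gives that $f^{\an}:X^{\an}\to S$ is proper, so $(X_{\cO(K)})^{\an}=(f^{\an})^{-1}(K)$ is a compact subset of the Hausdorff space $X^{\an}$, and the preimages by $f^{\an}$ of Stein open neighborhoods of $K$ form a cofinal system of open neighborhoods. Standard continuity of sheaf cohomology on Hausdorff spaces along a shrinking basis of neighborhoods of a compact subset (see e.g.\ \cite[III, (5.1)]{Iversen}) yields
\[
\underset{K\subset U}{\colim}\,H^k((X_{\cO(U)})^{\an},\L^{\an})\isoto H^k((X_{\cO(K)})^{\an},\L^{\an}).
\]

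On the algebraic side, as in Step \ref{step1bis} of the proof of Theorem \ref{Artincompcx} one may replace, via Lemma \ref{semianal}(ii), the colimit over Stein open neighborhoods $U$ by the colimit over excellent Stein compact neighborhoods $L$ of $K$. Then $\cO(K)=\colim_L\cO(L)$, so $\Spec(\cO(K))$ is a cofiltered limit of the affine schemes $\Spec(\cO(L))$ with affine transition maps. By Lemma \ref{int}(i), each morphism $X_{\cO(L)}\to\Spec(\cO(L))$ is proper, and the sheaf $\L|_{X_{\cO(K)}}$ is the pullback of $\L|_{X_{\cO(L)}}$ (for any $L$). The standard continuity of \'etale cohomology of torsion sheaves along cofiltered limits of qcqs schemes with affine transition maps then gives
\[
\underset{K\subset L}{\colim}\,H^k_{\et}(X_{\cO(L)},\L|_{X_{\cO(L)}})\isoto H^k_{\et}(X_{\cO(K)},\L|_{X_{\cO(K)}}).
\]
Combining these two identifications with Theorem \ref{Artincompcx} concludes the proof.

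The main delicate point is the algebraic continuity statement, since the rings $\cO(L)$ are typically not noetherian and $\L$ is only assumed torsion. Writing $\L$ as a filtered colimit of constructible sheaves (as in Step \ref{step2bis} of the proof of Theorem \ref{Artincompcx}) and using that $X_{\cO(L)}\to\Spec(\cO(L))$ is proper — so that the schemes involved are all qcqs — reduces the question to the standard continuity result for torsion \'etale cohomology under cofiltered limits of qcqs schemes, which is available in the literature.
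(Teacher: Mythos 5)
Your argument is exactly the paper's proof, which simply cites the two continuity results you invoke: the Stacks Project limit theorem for \'etale cohomology of torsion sheaves on qcqs schemes with affine transition maps on the algebraic side, and Iversen's continuity of sheaf cohomology over a compact subset of a locally compact space on the analytic side (the relevant reference is \cite[III, Lemma 6.3]{Iversen} rather than III, (5.1), and one should note that $X^{\an}$ is locally compact, not merely Hausdorff). Your additional observations (properness of $f^{\an}$ making $(f^{\an})^{-1}(U)$ cofinal, and reduction to constructible sheaves) are correct but already absorbed into those two citations.
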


\begin{proof}
The isomorphism (\ref{compaU}) has the required form by \cite[Theorem \href{https://stacks.math.columbia.edu/tag/09YQ}{09YQ}]{SP} and \cite[III, Lemma 6.3]{Iversen}. 
\end{proof}

\begin{rem}
Bingener's \cite[Theorem 7.4]{Bingener} implies the particular case of Theorem \ref{Artincompcx} where $k=1$,  the sheaf $\L$ is constant and $X=\Spec(\cO(S))$.
\end{rem}

\subsection{\texorpdfstring{$G$}{G}-equivariant complex spaces}
\label{Geqpar}

 Let $G:=\Gal(\C/\R)\simeq\Z/2$ be the Galois group of $\R$,  generated by the complex conjugation $\sigma\in G$.
We use the conventions of \cite[Appendix A]{tight} concerning $G$-equivariant complex-analytic geometry.
In particular, a $G$-equivariant complex space is a complex space endowed with an action of $G$ (as a locally ringed space) such that $\sigma\in G$ acts $\C$-antilinearly.  

A $G$-equivariant complex space is said to be Stein if so is its underlying complex space.  A $G$-invariant Stein compact subset of a $G$-equivariant complex space $S$ admits a basis of $G$-invariant Stein open neighborhoods in $S$ (as the intersection of a Stein open neighborhood of $K$ in $S$ with its image by $\sigma$ is Stein).

The \textit{complex conjugate} $S^{\sigma}$ of a complex space $S$ with structural morphism ${\mu:\C\to\cO_S}$ is the complex space which is equal to $S$ as a locally ringed space, with structural morphism $\mu_{S^{\sigma}}:=\mu\circ\sigma:\C\to\cO_S$. A $G$-equivariant complex space can equivalently be described as a complex space $S$ endowed with an isomorphism $\alpha:S^{\sigma}\isoto S$ such that $\alpha\circ\alpha^{\sigma}=\Id_S$ (see \cite[\S A.2]{tight}).

Note that $\cO(S)=\cO(S)^G\otimes_{\R}\C$. A morphism $f:X\to Y$ of $\cO(S)^G$\nobreakdash-schemes is said to interiorly have a property if such is the case for $f_{\cO(S)}:X_{\cO(S)}\to Y_{\cO(S)}$.
If $X$ is an $\cO(S)^G$-scheme interiorly locally of finite type, the descent datum on $X_{\cO(S)}=X\times_{\Spec(\R)}\Spec(\C)$ yields an isomorphism $\alpha:((X_{\cO(S)})^{\an})^{\sigma}\isoto (X_{\cO(S)})^{\an}$ which endows $(X_{\cO(S)})^{\an}$ with a structure of $G$-equivariant complex space: the analytification $X^{\an}$ of $X$. The site morphism $\varepsilon:(X^{\an})_{\cl}\to X_{\et}$ of (\ref{carredesites}) is $G$\nobreakdash-equivariant for the actions of $G$ by complex conjugation. In particular, the analytification $\L^{\an}$ of a sheaf $\L$ on $X_{\et}$ is naturally a $G$-equivariant sheaf on $X^{\an}$. We will say that~$\L$ is interiorly constructible if so is $\L|_{X_{\cO(S)}}$.

We state for later use in the proof of Theorem \ref{sosArtincx} a $G$-equivariant analogue of Corollary \ref{eqcatetale}, which appears in \cite[Proposition A.5]{tight} when $S$ has dimension~$1$.

\begin{prop}
\label{eqcatG}
Let $S$ be a reduced $G$-equivariant Stein space with finitely many irreducible components. 
Associating with $p:T\to S$ the $\cM(S)^G$\nobreakdash-algebra $\cM(T)^G$ induces an equivalence of categories
$$
 \left\{  \begin{array}{l}
    \textrm{$G$-equivariant analytic coverings }p:T\to S\\
    \textrm{\hspace{6em}with $T$ normal}
  \end{array}\right\}\to
 \left\{  \begin{array}{l}
    \textrm{\hspace{1.3em}finite \'etale}\\\cM(S)^G\textrm{-algebras}
  \end{array}\right\}.
$$
\end{prop}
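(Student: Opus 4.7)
The plan is to deduce Proposition~\ref{eqcatG} from Corollary~\ref{eqcatetale} by Galois descent along the natural inclusion $\cM(S)^G\subset\cM(S)$.

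First, I would check that this inclusion realizes $\cM(S)$ as a $G$-Galois \'etale algebra over $\cM(S)^G$, \ie that the natural map $\cM(S)^G\otimes_{\R}\C\to\cM(S)$ is an isomorphism. This exploits the $\C$\nobreakdash-antilinearity of the $G$-action: any $h\in\cM(S)$ decomposes as $h=\frac{h+\sigma(h)}{2}+i\cdot\frac{h-\sigma(h)}{2i}$ with both summands fixed by $G$, yielding surjectivity, while injectivity is immediate. Since $\R\subset\C$ is a degree two \'etale Galois extension with group $G$, the same holds for $\cM(S)^G\subset\cM(S)$ after base change.

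Second, standard Galois descent applied to this extension yields an equivalence of categories between finite \'etale $\cM(S)^G$\nobreakdash-algebras $A$ and pairs $(B,\rho)$ consisting of a finite \'etale $\cM(S)$-algebra $B$ together with a semilinear $G$-action $\rho$ extending the $G$-action on $\cM(S)$. The quasi-inverse functors are $A\mapsto A\otimes_{\cM(S)^G}\cM(S)$ and $(B,\rho)\mapsto B^G$.

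Third, I would upgrade Corollary~\ref{eqcatetale} equivariantly to an equivalence between $G$\nobreakdash-equivariant analytic coverings $p:T\to S$ with $T$ normal and finite \'etale $\cM(S)$-algebras endowed with a compatible semilinear $G$-action. This is a formal consequence of the full faithfulness and functoriality of Corollary~\ref{eqcatetale}: a $G$-action on $T$ covering the one on $S$ transports by functoriality to a compatible $G$-action on $\cM(T)$, and conversely any such $G$-action on $\cM(T)$ is induced by a unique $G$-action on $T$ (since morphisms of coverings are detected by their effect on meromorphic function algebras). Normality is preserved by $G$ as the action is locally biholomorphic up to complex conjugation.

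Composing the equivalences of the second and third steps gives the equivalence of Proposition~\ref{eqcatG}, which indeed sends an analytic covering $p:T\to S$ to $\cM(T)^G$. I expect the main obstacle to be the first step: verifying cleanly that $\cM(S)^G\subset\cM(S)$ really is a $G$-Galois \'etale ring extension when $\cM(S)$ is merely a finite product of fields (with $G$ possibly permuting the factors corresponding to orbits of irreducible components), although this reduces to the above decomposition coming from $\C$-antilinearity, applied component by component.
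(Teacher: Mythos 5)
Your proposal is correct and takes essentially the same approach as the paper: the paper's proof is a one-line reduction to Corollary~\ref{eqcatetale} via the description of $G$-equivariant complex spaces as complex spaces equipped with a descent datum $\alpha:S^{\sigma}\isoto S$ satisfying $\alpha\circ\alpha^{\sigma}=\Id_S$. Your three steps (identifying $\cM(S)\cong\cM(S)^G\otimes_{\R}\C$, Galois descent for algebras, and matching descent data through the equivalence of Corollary~\ref{eqcatetale}) simply spell out the details the paper leaves implicit.
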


\begin{proof}
The proposition follows from Corollary \ref{eqcatetale} by using the above description of $G$\nobreakdash-equivariant complex analytic spaces as complex analytic spaces $S$ endowed with an isomorphism $\alpha:S^{\sigma}\isoto S$ such that $\alpha\circ\alpha^{\sigma}=\Id_S$.
\end{proof}

\subsection{Artin's comparison theorem over \texorpdfstring{$G$}{G}-equivariant Stein compact sets}
\label{Geqpar2}

  Here is an extension of Theorem \ref{Artincompcx} to the $G$-equivariant setting, which goes back to Cox \cite[Theorem 1.1]{Cox} when $S$ is a point (see also \cite[(15.3.2)]{Scheiderer}).

\begin{thm}
\label{Artincompreal}
Let $S$ be a $G$-equivariant Stein space. Let ${f:X\to\Spec(\cO(S)^G)}$ be an $\cO(S)^G$\nobreakdash-scheme interiorly of finite type.
Fix a torsion abelian sheaf $\L$ on $X_{\et}$. Assume that $f$ is interiorly proper or that $\L$ is interiorly constructible.
If one lets~$U$ run over all $G$-invariant Stein open neighborhoods of a $G$-invariant Stein compact subset $K$ of $S$, the base change morphisms
\begin{equation}
\label{compaU2}
\underset{K\subset U}{\colim}\,\,H^k_{\et}(X_{\cO(U)^G},\L_{\cO(U)^G})\to\underset{K\subset U}{\colim}\,\,H_G^k((X_{\cO(U)^G})^{\an},\L^{\an})
\end{equation}
are isomorphisms for $k\geq 0$.
\end{thm}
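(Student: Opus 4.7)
The plan is to deduce Theorem \ref{Artincompreal} from Theorem \ref{Artincompcx} via the Hochschild--Serre spectral sequence associated with the degree $2$ Galois extension $\cO(U)^G\subset\cO(U)$, on both the \'etale and classical sides, and then to pass to the colimit over $U$.

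First, I would set up the two spectral sequences. On the algebraic side, $\Spec(\cO(U))\to\Spec(\cO(U)^G)$ is a $G$-torsor, and base change along it yields a Hochschild--Serre spectral sequence
\begin{equation*}
E_2^{p,q}=H^p\bigl(G,H^q_{\et}(X_{\cO(U)},\L_{\cO(U)})\bigr)\implies H^{p+q}_{\et}(X_{\cO(U)^G},\L_{\cO(U)^G}).
\end{equation*}
On the analytic side, $G$-equivariant cohomology of a $G$-sheaf on a $G$-space admits an analogous Hochschild--Serre spectral sequence
\begin{equation*}
E_2^{p,q}=H^p\bigl(G,H^q((X_{\cO(U)^G})^{\an},\L^{\an})\bigr)\implies H^{p+q}_G((X_{\cO(U)^G})^{\an},\L^{\an}),
\end{equation*}
where one uses that $(X_{\cO(U)^G})^{\an}$, viewed as the underlying complex space of $(X_{\cO(U)})^{\an}$ equipped with its $G$-action (see~\S\ref{Geqpar}), carries the pulled-back $G$-sheaf $\L^{\an}$. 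The change-of-topology morphism from \S\ref{paretale} is $G$-equivariant, so it induces a morphism of spectral sequences whose induced map of abutments is (after taking the colimit over $U$) precisely the morphism (\ref{compaU2}).

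Next, I would pass to the colimit. The hypothesis that $K$ is a $G$-invariant Stein compact subset of $S$ ensures, as recalled in \S\ref{Geqpar}, that $K$ admits a basis of $G$-invariant Stein open neighborhoods, so the filtered system of such $U$ is cofinal in its non-equivariant analogue (after restriction to $G$-invariant~$U$'s). Since $G$ is finite, the functor $H^p(G,-)$ can be computed by the bar resolution and therefore commutes with filtered colimits. Consequently, the colimit over $U$ of the $E_2^{p,q}$ terms on each side equals $H^p(G,-)$ applied to
\begin{equation*}
\underset{K\subset U}{\colim}\,\,H^q_{\et}(X_{\cO(U)},\L_{\cO(U)})\quad\text{and}\quad\underset{K\subset U}{\colim}\,\,H^q((X_{\cO(U)})^{\an},\L^{\an})
\end{equation*}
respectively. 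By Theorem \ref{Artincompcx} applied to the $\cO(S)$-scheme $X_{\cO(S)}$ (which is interiorly of finite type, interiorly proper if $f$ is, with $\L_{\cO(S)}$ interiorly constructible if $\L$ is), these two colimits are isomorphic as $G$-modules, so the colimits of the $E_2$ terms are isomorphic.

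Finally, a standard spectral-sequence comparison (filtered colimits of spectral sequences in the first quadrant converge, and a morphism of spectral sequences inducing an iso on $E_2$ induces an iso on abutments) yields that the colimits of the abutments are isomorphic, which is exactly (\ref{compaU2}). The step that requires the most care is checking the compatibility of the two Hochschild--Serre spectral sequences under the change-of-topology morphism, but this ultimately reduces to the $G$-equivariance of the natural transformation $\L\rightsquigarrow \L^{\an}$ established in \S\ref{Geqpar}, together with the standard construction of the Hochschild--Serre spectral sequence as the Leray spectral sequence of the quotient morphism (of sites, resp.\ of topoi).
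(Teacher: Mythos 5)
Your proposal is correct and follows essentially the same route as the paper: the paper's proof also compares the two Hochschild--Serre spectral sequences (citing Scheiderer for their construction), uses the $G$-equivariance of the change-of-topology morphism $\varepsilon$ to get a morphism of spectral sequences, takes the colimit over $U$, and invokes Theorem \ref{Artincompcx} to obtain an isomorphism on the $E_2$ page and hence on the abutments. Your additional remarks on the commutation of $H^p(G,-)$ with filtered colimits and on the cofinality of $G$-invariant neighborhoods simply make explicit points the paper leaves implicit.
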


\begin{proof}
The $G$-equivariant site morphisms $\varepsilon:((X_{\cO(U)})^{\an})_{\cl}\to (X_{\cO(U)})_{\et}$ induces morphisms between the Hochschild--Serre spectral sequences of \cite[Remark~10.9]{Scheiderer}
\begin{alignat}{4}
&E_2^{p,q}=H^p(G,H^q_{\et}(X_{\cO(U)},\L_{\cO(U)}))&\implies& H^{p+q}_{\et}(X_{\cO(U)^G},\L_{\cO(U)^G})\textrm{\hspace{.3em} and }\nonumber\\
&E_2^{p,q}=H^p(G,H^q((X_{\cO(U)})^{\an},\L^{\an}))&\implies& H^{p+q}_G((X_{\cO(U)^G})^{\an},\L^{\an}).\nonumber
\end{alignat}
After taking the colimit over $U$,  we get an isomorphism on page $2$ by Theorem \ref{Artincompcx},  hence an isomorphism on the abutment.
\end{proof}

\subsection{Cohomological dimension} 
\label{cdpar}

A scheme~$X$ is said to have \textit{cohomological dimension} $\leq n$ if the \'etale cohomology of any torsion abelian sheaf on~$X_{\et}$ vanishes in degree $>n$.
Our next theorem controls the cohomological dimension of affine varieties over Stein compacta. It is a generalization in Stein geometry of the bounds on the cohomogical dimension of complex and real affine varieties obtained in \cite[XIV, Corollaire~3.2]{SGA43} and \cite[Corollary 7.21]{Scheiderer}.

\begin{thm}
\label{cdaffineG}
Let $K$ be a $G$-invariant Stein compact subset in a $G$-equivariant Stein space $S$. Let $X$ be an affine $\cO(S)^G$-scheme interiorly of finite type with $(X^{\an})^G=\varnothing$. Then $X_{\cO(K)^G}$ has \'etale cohomological dimension~$\leq \dim(X^{\an})$.
\end{thm}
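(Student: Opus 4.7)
The plan is to apply the $G$-equivariant comparison theorem (Theorem \ref{Artincompreal}) to transfer the vanishing to singular $G$-equivariant cohomology, and then to exploit the freeness of the $G$-action on $X^{\an}$ together with the Andreotti--Frankel--Hamm homotopy bound for Stein spaces.

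Let $n:=\dim(X^{\an})$, and fix a torsion abelian sheaf $\L$ on $(X_{\cO(K)^G})_{\et}$; the goal is $H^k_{\et}(X_{\cO(K)^G},\L)=0$ for $k>n$. Since torsion sheaves are filtered colimits of constructibles, since $\cO(K)^G=\colim_U\cO(U)^G$ over $G$-invariant Stein neighborhoods $U$ of $K$, and since étale cohomology commutes with such filtered colimits, one reduces to showing $\colim_{U\subset U_0}H^k_{\et}(X_{\cO(U)^G},\L_U)=0$ for some constructible $\L_{U_0}$ on $X_{\cO(U_0)^G}$. By Theorem \ref{Artincompreal}, this colimit equals $\colim_U H^k_G(X^{\an}_U,\L^{\an})$ with $X^{\an}_U:=(f^{\an})^{-1}(U)$. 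A standard devissage for constructible étale sheaves (filtrations by support, together with pushforwards along finite morphisms, which commute with analytification and have vanishing higher direct images) then reduces the vanishing to the case $\L=\Z/m$, possibly after replacing $X$ by a closed $\cO(S)^G$-subscheme; this substitution preserves the hypotheses of the theorem and only decreases $\dim(X^{\an})$.

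Assume now $\L=\Z/m$. Since $(X^{\an})^G=\varnothing$, the group $G$ acts freely on each $X^{\an}_U$, so the projection $\pi:X^{\an}_U\to Y_U:=X^{\an}_U/G$ is a topological $G$-Galois covering. The equivalence between $G$-equivariant sheaves on a free $G$-space and sheaves on its quotient yields $H^k_G(X^{\an}_U,\Z/m)=H^k(Y_U,\Z/m)$. As $X$ is affine and interiorly of finite type over $\cO(S)^G$, the space $X^{\an}_U$ is Stein of complex dimension $\leq n$. Averaging a smooth strongly plurisubharmonic exhaustion function over $G$ gives a $G$-invariant such function, and the $G$-equivariant refinement of the Morse-theoretic argument of Andreotti--Frankel--Hamm endows $X^{\an}_U$ with a $G$-CW structure of cells of real dimension $\leq n$. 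Since $G$ acts freely, $Y_U$ inherits a CW structure of dimension $\leq n$, whence $H^k(Y_U,\Z/m)=0$ for $k>n$.

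The main obstacle is the devissage reducing the problem to $\L=\Z/m$: the infinite-resolution argument of Step \ref{step2bis} is adapted to comparison isomorphisms and does not directly propagate vanishings, so one must instead use a filtration-by-supports argument in the spirit of Artin's original bound on the cohomological dimension of affine varieties (SGA~4 XIV). The equivariant Morse theory producing a $G$-CW structure of dimension $\leq n$ on the possibly singular Stein space $X^{\an}_U$ is standard, being a direct equivariant refinement of Hamm's proof.
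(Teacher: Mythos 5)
Your first half tracks the paper's proof: reduce to constructible $\L$ by filtered colimits, spread out to a neighborhood, apply Theorem \ref{Artincompreal}, and observe that $(X_{\cO(U)^G})^{\an}$ is a $G$-equivariant Stein space of dimension $\leq\dim(X^{\an})$. The divergence — and the gap — is in how the resulting topological vanishing is obtained. The paper does \emph{not} reduce to $\L=\Z/m$: it notes that $\L^{\an}$ is weakly constructible and invokes a $G$-equivariant Artin vanishing theorem for weakly constructible sheaves on $G$-equivariant Stein spaces with $(X^{\an})^G=\varnothing$ (the reference \cite[Theorem 2.6]{Artinvanishing}), which kills $H^k_G$ in degrees $>\dim(X^{\an})$ for all such coefficients at once. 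Your proposed d\'evissage to constant coefficients is precisely the step that does not exist as a "standard" argument. Finite filtrations by support reduce you to sheaves of the form $j_!\cF$ with $j$ an open immersion and $\cF$ locally constant, and $H^k(X,j_!\cF)$ is not controlled by the constant-coefficient case together with induction on dimension: comparing $j_!\cF$ with $Rj_*\cF$ brings in $H^p(Z,i^*R^qj_*\cF)$ with $q>0$, and $U$ itself is not affine, so neither branch closes the induction. Sheaves of the form $\pi_*\M$ with $\pi$ finite give only resolutions (as in Steps \ref{step3} and \ref{step2bis}), not finite filtrations, and resolutions propagate isomorphisms, not vanishing — as you yourself observe. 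Finally, "the spirit of SGA 4 XIV" is not a filtration-by-supports d\'evissage: Artin's bound for affine varieties with general torsion coefficients is proved by induction via elementary fibrations in curves, and the introduction of this very paper explains that such fibration arguments cannot be implemented in Stein geometry. In short, the vanishing for weakly constructible coefficients is itself a substantial theorem (an equivariant Artin vanishing theorem), and your proof assumes a reduction that would essentially require proving it.

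Your treatment of the constant-coefficient case is sound in outline (freeness of the $G$-action since $(X^{\an})^G=\varnothing$, identification $H^k_G(X^{\an}_U,\Z/m)\cong H^k(X^{\an}_U/G,\Z/m)$, and a $G$-invariant strongly psh exhaustion obtained by averaging, noting that $\rho\circ\sigma$ is again strongly psh because $\sigma$ is antiholomorphic), though the assertion that Hamm's theorem refines to a $G$-CW structure of dimension $\leq n$ on a possibly singular Stein space is stated rather than proved and is itself part of what the cited equivariant Artin vanishing theorem establishes. If you want a self-contained argument along your lines, you would need to prove the weakly constructible vanishing directly (e.g.\ by the Morse-theoretic estimates on $H^k_{\{\rho\geq c\}}$ used in modern proofs of Artin vanishing), carried out equivariantly; simply quoting the constant-coefficient Hamm theorem does not suffice.
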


\begin{proof}
We may suppose that $\dim(X^{\an})<\infty$. We wish to show the vanishing of the degree $k$ cohomology of a torsion abelian sheaf on $(X_{\cO(K)^G})_{\et}$  if $k>\dim(X^{\an})$.
By~\cite[Lemma \href{https://stacks.math.columbia.edu/tag/03SA}{03SA} (2)]{SP}, any such sheaf is a filtered colimit of constructible sheaves.  In view of \cite[Lemma \href{https://stacks.math.columbia.edu/tag/03Q5}{03Q5}~(2)]{SP}, we may thus only consider constructible sheaves.
By \cite[Lemma \href{https://stacks.math.columbia.edu/tag/09YU}{09YU}]{SP},  any constructible abelian sheaf on~$(X_{\cO(K)^G})_{\et}$ is of the form~$\L|_{X_{\cO(K)^G}}$ for some constructible abelian sheaf~$\L$ on $X_{\et}$,  after possibly shrinking $S$.
Letting~$U$ run over all $G$-invariant Stein open neighborhoods of $K$ in~$S$, one computes
\begin{equation}
\begin{alignedat}{5}
\label{eqcd}
H^k_{\et}(X_{\cO(K)^G},\L|_{X_{\cO(K)^G}})&=\underset{K\subset U}{\colim}\,\,H^k_{\et}(X_{\cO(U)^G},\L|_{X_{\cO(U)^G}})\\
&=\underset{K\subset U}{\colim}\,\,H^k_G((X_{\cO(U)^G})^{\an},\L^{\an}|_{(X_{\cO(U)^G})^{\an}}),
\end{alignedat}
\end{equation}
where we used successively \cite[Theorem \href{https://stacks.math.columbia.edu/tag/09YQ}{09YQ}]{SP}, and Theorem \ref{Artincompreal}.

After shrinking $S$, we may assume that the affine $\cO(S)^G$-scheme $X$ is of finite presentation,  and hence is defined in $\A^N_{\cO(S)^G}$ for some $N\geq 0$ by the vanishing of finitely many elements of $\cO(S)^G[x_1,\dots,x_N]$. It follows from the concrete construction of the analytification recalled in \S\ref{parBingener} that the $G$\nobreakdash-equivariant complex space~$(X_{\cO(U)^G})^{\an}$ may be realized as a $G$-invariant closed complex subspace of~$\C^N\times U$.  It is therefore a $G$\nobreakdash-equivariant Stein space.  

As $\L$ is constructible, the sheaf $\L^{\an}$ on $X^{\an}$ is weakly constructible (in the sense that it is locally constant in restriction to the strata of some analytic stratification of $X^{\an}$).  By our hypothesis that $(X^{\an})^G=\varnothing$, the Artin vanishing theorem of \cite[Theorem 2.6]{Artinvanishing} implies that ${H^k_G((X_{\cO(U)^G})^{\an},\L^{\an}|_{(X_{\cO(U)^G})^{\an}})=0}$ for~${k>\dim(X^{\an})}$.
It now follows from (\ref{eqcd}) that $H^k_{\et}(X_{\cO(K)^G},\L|_{X_{\cO(K)^G}})=0$ for $k>\dim(X^{\an})$.
\end{proof}

The following application of Theorem \ref{cdaffineG} extends an algebraic result  \cite[Proposition 1.2.1]{CTP} attributed to Ax by Colliot-Th\'el\`ene and Parimala.

\begin{thm}
\label{cdfieldG}
Let $K$ be a $G$-invariant Stein compact subset of a $G$-equivariant Stein space $S$.  Let $X$ be an $\cO(S)^G$-scheme interiorly of finite type with ${(X^{\an})^G=\varnothing}$.  If $X_{\cO(K)^G}$ is integral, its function field has cohomological dimension~$\leq \dim(X^{\an})$.
\end{thm}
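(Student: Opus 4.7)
The plan is to reduce the cohomological dimension bound for $F=k(X_{\cO(K)^G})$ to \'etale cohomological dimension estimates for affine open subschemes of $X_{\cO(K)^G}$, and then apply Theorem \ref{cdaffineG} after spreading each such open out to a $G$-invariant Stein open neighborhood of $K$ in~$S$.

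I first reduce to $X$ affine. Covering $X$ by affine open subschemes interiorly of finite type over $\cO(S)^G$, at least one of their base changes to $\cO(K)^G$ is nonempty; by integrality of $X_{\cO(K)^G}$, such an affine open is dense, so has function field $F$, and we may replace $X$ by it. Set $R:=\cO(X_{\cO(K)^G})$, so that $\Spec(F)=\lim_{f\in R\setminus\{0\}}D(f)$ is a filtered limit of qcqs schemes with affine transition morphisms. A standard limit argument (see \cite[Theorem \href{https://stacks.math.columbia.edu/tag/09YQ}{09YQ}]{SP}), together with the fact that every torsion abelian \'etale sheaf on $\Spec F$ is a filtered colimit of constructible ones (each descending to some $D(f_0)$), reduces the statement to showing $\cd(D(f))\leq \dim(X^{\an})$ for every nonzero $f\in R$.

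To prove this bound, I spread $f$ out. Since $K$ admits a basis of $G$-invariant Stein open neighborhoods in $S$ and $G$-invariants commute with filtered colimits, one has $\cO(K)^G=\colim_M\cO(M)^G$, where $M$ ranges over $G$-invariant, relatively compact Stein open neighborhoods of $K$ in $S$. As $R=\cO(X)\otimes_{\cO(S)^G}\cO(K)^G$, the element $f$ is the image of some $\tf\in\cO(X_{\cO(M)^G})$ for such an $M$. Let $Y:=D(\tf)\subset X_{\cO(M)^G}$. By Lemma \ref{int}~(iii) applied to the $\cO(S)$\nobreakdash-interior situation, the base change $X_{\cO(M)}$ is of finite type over $\cO(M)$; since $\cO(M)$ is free of rank two over $\cO(M)^G$, fppf descent implies that $X_{\cO(M)^G}$ is of finite type over $\cO(M)^G$. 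Hence $Y$ is an affine $\cO(M)^G$-scheme of finite type.  Viewing $Y$ as an $\cO(M)^G$-scheme interiorly of finite type over the $G$-equivariant Stein space $M$, its analytification identifies with the $G$-invariant open subset $\{\tf\neq 0\}$ of $(X_{\cO(M)^G})^{\an}$, itself naturally identified with the preimage of $M$ inside $X^{\an}$. In particular, $(Y^{\an})^G\subset(X^{\an})^G=\varnothing$ and $\dim(Y^{\an})\leq\dim(X^{\an})$.

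Applying Theorem \ref{cdaffineG} to the $G$-equivariant Stein space $M$, the $G$-invariant Stein compact subset $K\subset M$, and the affine $\cO(M)^G$-scheme $Y$, we conclude $\cd(Y_{\cO(K)^G})\leq\dim(Y^{\an})\leq\dim(X^{\an})$.  The equality $Y_{\cO(K)^G}=D(f)$ follows from the compatibility of localization with base change, which finishes the argument.  The step I expect to require the most care is the spreading-out: one must check that $\tf$ can indeed be chosen over a $G$-invariant, Stein, relatively compact open neighborhood of $K$, and that the two natural structures of complex space on the analytification of $Y$ (via the bases $\cO(S)^G$ and $\cO(M)^G$) are identified in a $G$-equivariant way, so that the dimension bound and the vanishing $(Y^{\an})^G=\varnothing$ transfer cleanly from the hypotheses on $X$.
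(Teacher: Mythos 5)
Your proposal is correct and follows essentially the same route as the paper's proof: reduce to $X$ affine, spread each nonzero denominator out to a distinguished affine open over a $G$-invariant Stein open neighborhood of $K$, apply Theorem \ref{cdaffineG} to that open, and pass to the filtered (co)limit to conclude for the function field. The only difference is cosmetic—the paper indexes the colimit by pairs $(i,h)$ and cites the Stacks Project lemma on cohomological dimension of colimits of rings, while you phrase it as a limit of the schemes $D(f)$.
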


\begin{proof}
We may assume that $X$ is affine (after replacing it with an affine open subset~$X'\subset X$ such that $X'_{\cO(K)^G}$ is nonempty).
Let $(U_i)_{i\in\N}$ be a decreasing basis of $G$-invariant Stein open neighborhoods of~$K$ in $S$. 
If $i\in \N$ and $h\in \cO(X_{\cO(U_i)^G})$ is nonzero in restriction to $X_{\cO(K)^G}$, we define $Y_{i,h}$ to be the distinguished affine open subset of $X_{\cO(U_i)^G}$ where $h\neq 0$.
Then the function field $F$ of~$X_{\cO(K)^G}$ can be written as the filtered colimit of rings $F=\underset{i,h}{\colim}\,\,\cO((Y_{i,h})_{\cO(K)^G})$. By Theorem~\ref{cdaffineG} applied to the affine $\cO(U_i)^G$-scheme $Y_{i,h}$, the scheme $(Y_{i,h})_{\cO(K)^G}$ has cohomological dimension $\leq\dim(X^{\an})$. We deduce from \cite[Lemma \href{https://stacks.math.columbia.edu/tag/0F0R}{0F0R}]{SP} that $F$ has cohomological dimension $\leq\dim(X^{\an})$. 
\end{proof}

If $(X^{\an})^G\neq\varnothing$, one may sometimes apply Theorem \ref{cdfieldG} to a Zariski open subset~$X'$ of $X$ such that $((X')^{\an})^G=\varnothing$. The next corollary is obtained in this way.

\begin{cor}
\label{cdfieldGcor}
Let $K$ be a $G$-invariant Stein compact subset of a $G$-equivariant Stein space $S$.
If $S^G$ is contained in a nowhere dense closed analytic subset ${Z\subset S}$ and $\cO(K)^G$ is a domain, then $\Frac(\cO(K)^G)$ has cohomological dimension~$\leq\dim(S)$.
\end{cor}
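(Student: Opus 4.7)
The strategy is to apply the $G$-equivariant cohomological dimension result, Theorem \ref{cdfieldG}, to a well-chosen $\cO(S)^G$-scheme $X$ whose analytification is $S \setminus Z$: then $(X^{\an})^G = (S \setminus Z)^G = S^G \setminus Z = \varnothing$ by hypothesis, and the bound $\dim(X^{\an}) = \dim(S)$ comes from $Z$ being nowhere dense.

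First, after replacing $Z$ with $Z \cup \sigma(Z)$, one may assume $Z$ is $G$-invariant (it remains nowhere dense, closed analytic, and contains $S^G$). Lemma \ref{openanal} then furnishes an open immersion $V_0 \hookrightarrow \Spec(\cO(S))$ interiorly of finite type with $V_0^{\an}$ identified with $S \setminus Z \hookrightarrow S$. The $G$-invariance of $Z$ together with the splitting $\cO(S) = \cO(S)^G \otimes_{\R} \C$ forces $\cI_Z(S) = \cI_Z(S)^G \otimes_{\R} \C$, so the $G$-stable subscheme $V_0 \subset \Spec(\cO(S))$ descends via Galois descent to the open subscheme
\[
X := \Spec(\cO(S)^G) \setminus V\!\left(\cI_Z(S)^G\right) \subset \Spec(\cO(S)^G),
\]
whose base change to $\cO(S)$ recovers $V_0$. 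Thus $X$ is interiorly of finite type over $\cO(S)^G$ and its $G$-equivariant analytification is $S \setminus Z$.

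Next, I would verify that $X_{\cO(K)^G}$ is integral with function field $\Frac(\cO(K)^G)$. Since $\cO(K)^G$ is assumed to be a domain, the open subscheme $X_{\cO(K)^G} = \Spec(\cO(K)^G) \setminus V\!\left(\cI_Z(S)^G \cdot \cO(K)^G\right)$ of the integral scheme $\Spec(\cO(K)^G)$ is integral with the required function field, provided it is nonempty, equivalently provided $\cI_Z(S)^G \cdot \cO(K)^G \neq 0$. To this end, pick any $s \in K$; as $Z$ is nowhere dense, $Z$ does not contain the irreducible component of $S$ through $s$, so the stalk $\cI_{Z,s}$ is nonzero. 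Cartan's Theorem A then yields $f \in \cI_Z(S)$ with nonzero germ at $s$, hence with nonzero image in $\cO(K)$. Writing $f = f^+ + i f^-$ with $f^+ := \tfrac12(f + \sigma(f))$ and $f^- := \tfrac{1}{2i}(f - \sigma(f))$, the $G$-invariance of $Z$ ensures $f^\pm \in \cI_Z(S)^G$; since $f \neq 0$ in $\cO(K) = \cO(K)^G \otimes_{\R} \C$, at least one of $f^\pm$ has nonzero image in $\cO(K)^G \hookrightarrow \cO(K)$.

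Finally, Theorem \ref{cdfieldG} applied to $X$ yields the desired conclusion: the function field $\Frac(\cO(K)^G)$ of $X_{\cO(K)^G}$ has cohomological dimension $\leq \dim(X^{\an}) = \dim(S)$. The main technical step is the nonemptiness verification, which combines Cartan's Theorem A with the Galois-theoretic splitting of $\cI_Z(S)$ to produce a $G$-invariant analytic function vanishing on $Z$ whose germ along $K$ is nonzero; once this is in hand, the remainder of the argument is routine descent.
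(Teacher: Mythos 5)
Your proof is correct and follows essentially the same route as the paper's: both apply Theorem \ref{cdfieldG} to the open subscheme of $\Spec(\cO(S)^G)$ cut out by $\cI_Z(S)^G$, whose analytification is $S\setminus Z$ (the paper invokes Lemma \ref{openanal} and its proof directly, leaving the symmetrization of $Z$ and the Galois descent implicit). Your explicit verification that $X_{\cO(K)^G}$ is nonempty, hence integral with function field $\Frac(\cO(K)^G)$, fills in a hypothesis of Theorem \ref{cdfieldG} that the paper's three-line proof takes for granted.
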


\begin{proof}
Define $V:=\Spec(\cO(S)^G)\setminus \Spec(\cO(S)^G/\cI_Z(S)^G)$.  By Lemma~\ref{openanal} and its proof, the $\cO(S)^G$-scheme $V$ is interiorly of finite type and the analytification of the open immersion $V\hookrightarrow \Spec(\cO(S)^G)$ identifies with $S\setminus Z\hookrightarrow S$.  It now suffices to apply Theorem \ref{cdfieldG} with $X=V$ to conclude.
\end{proof}

If $S$ is a complex space, then the disjoint union $T:=S\sqcup S^{\sigma}$ may be endowed with a structure of $G$-equivariant complex space, by letting $\sigma\in G$ exchange the two factors (and act by the identity on the underlying locally ringed spaces).  One then has $T^G=\varnothing$ and $\cO(T)^G\simeq\cO(S)$. Using this trick, one can formally deduce non-$G$-equivariant statements from $G$\nobreakdash-equivariant ones.  For instance,  Theorems \ref{cdaffineG} and \ref{cdfieldG} and Corollary \ref{cdfieldGcor} immediately imply the following.

\begin{thm}
\label{cdaffine}
Let $K$ be a Stein compact subset of a Stein space $S$.  Let $X$ be an affine $\cO(S)$-scheme interiorly of finite type. Then $X_{\cO(K)}$ has \'etale cohomological dimension~$\leq \dim(X^{\an})$.
\end{thm}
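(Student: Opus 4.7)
The plan is to invoke the doubling trick described just before the statement to deduce Theorem \ref{cdaffine} from its $G$-equivariant counterpart Theorem \ref{cdaffineG}. The heart of the argument is the observation that going from a complex space to its disjoint union with its conjugate kills all $G$-fixed points while preserving the ring of global holomorphic functions on $G$-invariant subsets in an appropriate sense.

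Concretely, I would first form the $G$-equivariant Stein space $T := S \sqcup S^{\sigma}$, with $\sigma$ exchanging the two factors. Then $T^G = \varnothing$ and there is a canonical ring isomorphism $\cO(T)^G \simeq \cO(S)$. Setting $L := K \sqcup K^{\sigma} \subset T$ yields a $G$-invariant Stein compact subset such that $\cO(L)^G \simeq \cO(K)$, and consequently $X_{\cO(L)^G}$ identifies with $X_{\cO(K)}$.

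View the affine $\cO(S)$-scheme $X$ as an affine $\cO(T)^G$-scheme via the above isomorphism. It is interiorly of finite type, and its base change to $\cO(T) = \cO(S) \times \cO(S^\sigma)$ is the disjoint union of $X$ with its conjugate. It follows from the concrete construction of analytifications recalled in \S\ref{parBingener} that the analytification of $X$ as an $\cO(T)^G$-scheme is the $G$-equivariant complex space $X^{\an} \sqcup (X^{\an})^{\sigma}$, whose $G$-action exchanges the two components. Hence this analytification has empty $G$-fixed locus and the same dimension as $X^{\an}$.

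Applying Theorem \ref{cdaffineG} to the triple $(T, L, X)$ then shows that $X_{\cO(L)^G}$ has \'etale cohomological dimension at most $\dim(X^{\an} \sqcup (X^{\an})^{\sigma}) = \dim(X^{\an})$, and the identification $X_{\cO(L)^G} \simeq X_{\cO(K)}$ concludes the proof. There is no real obstacle here beyond bookkeeping: the only step that warrants care is verifying that the analytification of $X$, when regarded as an $\cO(T)^G$-scheme, is indeed the disjoint union $X^{\an} \sqcup (X^{\an})^{\sigma}$ with the swap $G$-action, which follows from compatibility of analytification with change of base Stein space applied to the open cover $T = S \sqcup S^{\sigma}$.
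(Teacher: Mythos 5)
Your proposal is correct and is exactly the paper's intended argument: the paper introduces the doubling trick $T:=S\sqcup S^{\sigma}$ immediately before the statement and declares that Theorem \ref{cdaffineG} "immediately implies" Theorem \ref{cdaffine} via it. Your writeup just spells out the bookkeeping (the identifications $\cO(T)^G\simeq\cO(S)$, $\cO(L)^G\simeq\cO(K)$, and the analytification being $X^{\an}\sqcup(X^{\an})^{\sigma}$ with the swap action) that the paper leaves implicit.
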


\begin{thm}
\label{cdfield}
Let $K$ be a Stein compact subset of a Stein space $S$.  Let $X$ be an $\cO(S)$-scheme interiorly of finite type. If $X_{\cO(K)}$ is integral, then its function field has cohomological dimension~$\leq \dim(X^{\an})$.
\end{thm}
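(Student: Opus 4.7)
The plan is to deduce Theorem \ref{cdfield} from its $G$-equivariant counterpart (Theorem \ref{cdfieldG}) by means of the disjoint union trick described by the author immediately before the statement.

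Concretely, I would form the disjoint union $T := S \sqcup S^{\sigma}$ and endow it with the $G$-equivariant complex space structure in which $\sigma \in G$ exchanges the two components (while acting as the identity on the underlying ringed spaces). By construction $T$ is Stein, $T^G = \varnothing$, and there is a canonical isomorphism $\cO(T)^G \isoto \cO(S)$ (the projection onto the first factor). Set $L := K \sqcup K^{\sigma} \subset T$; this is a $G$-invariant Stein compact subset of $T$, and the induced isomorphism $\cO(L)^G \isoto \cO(K)$ realizes $\cO(K)$ as the ring of $G$-invariants in $\cO(L)$.

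Now view $X$ as an $\cO(T)^G$-scheme via the isomorphism $\cO(T)^G \isoto \cO(S)$; denote it $X'$. Then $X'$ is interiorly of finite type over $\cO(T)^G$ (since this property is preserved under the above identification and the Stein open covering of~$T$ by $S$ and $S^{\sigma}$), and a direct computation of the base change gives $(X')^{\an} \cong X^{\an} \sqcup (X^{\an})^{\sigma}$ as $G$-equivariant complex spaces. In particular $((X')^{\an})^G = \varnothing$ and $\dim((X')^{\an}) = \dim(X^{\an})$. Moreover the base-change formula together with $\cO(L)^G \cong \cO(K)$ yields
\begin{equation*}
X'_{\cO(L)^G} \cong X_{\cO(K)},
\end{equation*}
so the hypothesis that $X_{\cO(K)}$ is integral precisely says that $X'_{\cO(L)^G}$ is integral, with the same function field. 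Theorem \ref{cdfieldG} applied to $T$, $L$, and $X'$ then provides the bound $\cd(\Frac(X'_{\cO(L)^G})) \leq \dim((X')^{\an}) = \dim(X^{\an})$, which is the desired conclusion.

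The only step requiring any verification is the identification $(X')^{\an} \cong X^{\an} \sqcup (X^{\an})^{\sigma}$ together with $G$-equivariance; this is routine from the compatibility of analytification with base change \cite[(1.2)]{Bingener} and the conventions of \S\ref{Geqpar}, so there is no serious obstacle.
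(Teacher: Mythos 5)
Your proposal is correct and is exactly the paper's argument: the author introduces the disjoint union $T=S\sqcup S^{\sigma}$ with $\sigma$ exchanging the factors immediately before Theorem \ref{cdfield} and states that Theorem \ref{cdfieldG} "immediately implies" it via this trick. You have simply spelled out the routine verifications ($T^G=\varnothing$, $\cO(L)^G\cong\cO(K)$, $(X')^{\an}\cong X^{\an}\sqcup(X^{\an})^{\sigma}$) that the paper leaves implicit.
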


\begin{cor}
\label{cdfieldcor}
Let $K$ be a Stein compact subset of a Stein space $S$.  If~$\cO(K)$ is a domain, then $\Frac(\cO(K))$ has cohomological dimension~$\leq \dim(S)$.
\end{cor}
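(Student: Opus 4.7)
The plan is to deduce this statement from its $G$-equivariant counterpart, Corollary~\ref{cdfieldGcor}, by means of the formal doubling trick explained just before Theorem~\ref{cdaffine}.

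Concretely, I would form the disjoint union $T := S \sqcup S^\sigma$, endowed with the $G$-equivariant complex space structure in which $\sigma \in G$ exchanges the two factors while acting as the identity on the underlying locally ringed spaces. Then $T$ is a $G$-equivariant Stein space with $\dim(T) = \dim(S)$ and $T^G = \varnothing$, and the natural identification yields $\cO(T)^G \simeq \cO(S)$. I would then set $K' := K \sqcup K \subset T$: this is a $G$-invariant Stein compact subset of $T$ (a basis of $G$-invariant Stein open neighborhoods being given by disjoint unions $U \sqcup U^\sigma$ for $U$ a Stein open neighborhood of $K$ in $S$), and the identification of invariants gives $\cO(K')^G \simeq \cO(K)$, which is a domain by hypothesis, with $\Frac(\cO(K')^G) \simeq \Frac(\cO(K))$.

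It remains to invoke Corollary~\ref{cdfieldGcor} applied to the pair $(T, K')$ with $Z := \varnothing$: the hypothesis that $T^G$ be contained in a nowhere dense closed analytic subset is satisfied vacuously since $T^G = \varnothing$. The conclusion then delivers $\cd(\Frac(\cO(K))) \leq \dim(T) = \dim(S)$, as required. Since the entire deduction is a formal transfer from the $G$-equivariant setting, there is no substantive obstacle to surmount; all of the real work has already been carried out in proving Corollary~\ref{cdfieldGcor}.
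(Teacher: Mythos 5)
Your proposal is correct and is precisely the argument the paper intends: it states just before Theorem \ref{cdaffine} that the doubling trick $T := S \sqcup S^\sigma$ (with $T^G = \varnothing$ and $\cO(T)^G \simeq \cO(S)$) lets one "immediately" deduce Corollary \ref{cdfieldcor} from Corollary \ref{cdfieldGcor}. Your instantiation with $K' = K \sqcup K$ and $Z = \varnothing$ is exactly this formal transfer, so there is nothing to add.
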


\begin{rems}
\label{remcd}
(i)
One could have given direct proofs of Theorems \ref{cdaffine} and~\ref{cdfield} and Corollary \ref{cdfieldcor}, similar to the proofs of Theorems \ref{cdaffineG} and \ref{cdfieldG} and Corollary \ref{cdfieldGcor} (replacing Theorem \ref{Artincompreal} by Theorem~\ref{Artincompcx}).

(ii) 
In the setting of Corollary \ref{cdfieldcor}, if $S$ has pure dimension $n$ and $K$ is nonempty, the cohomological dimension of $\Frac(\cO(K))$ is equal to $n$. To see it, choose ${s\in K}$.
The local ring morphism $\cO(K)_s\to\cO_{S,s}$ is flat by Lemma \ref{flat}.   As the maximal ideal of $\cO(K)_s$ generates the maximal ideal of $\cO_{S,s}$ because $S$ is Stein,  it follows from \cite[Lemmas \href{https://stacks.math.columbia.edu/tag/033E}{033E} and \href{https://stacks.math.columbia.edu/tag/00ON}{00ON}]{SP} that $\cO(K)_s$ is noetherian of dimension $n$.
By \cite[X, Corollaire 2.5]{SGA43}, the field $\Frac(\cO(K))=\Frac(\cO(K)_s)$ has cohomological dimension~$\geq n$. 
\end{rems}

\section{Applications to Hilbert's $17$th problem}
\label{parsos}

We may now present our applications to sums of squares of analytic functions.

\subsection{Sums of squares}

The following theorem is an analogue of Artin's solution to Hilbert's $17$th problem \cite{Artin} on $G$-equivariant Stein compacta.

\begin{thm}
\label{sosArtincx}
Let $K$ be a $G$-invariant Stein compact subset of a reduced
$G$\nobreakdash-equi\-variant Stein space $S$. Let $f\in\cO(K)^G$ be nonnegative on a neighborhood of $K^G$ in~$S^G$.  Then $f$ is a sum of squares in~$\cM(K)^G$.
\end{thm}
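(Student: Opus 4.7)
The plan is to imitate Artin's classical approach to Hilbert's $17$th problem. Suppose, for contradiction, that $f$ is not a sum of squares in $F:=\cM(K)^G$. By the Artin--Schreier theorem, one may find an ordering $\leq$ of $F$ with $f<0$. The goal is then to extract from this ordering a point $s\in K^G$ with $f(s)<0$, contradicting the positivity hypothesis; no quantitative bound on the number of squares is sought, so one does not need the sharp cohomological dimension machinery of \S\ref{cdpar}.

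To implement the transfer, I would first reduce to a noetherian situation. By Lemma~\ref{semianal}~(ii), replace $K$ by an excellent Stein compact neighborhood $L\subset S$; by Lemma~\ref{excellent}~(iii), $A:=\cO(L)^G$ is then excellent, and $\Frac(A)$ (or its total ring of fractions) sits inside $F$ and inherits the ordering. By N\'eron--Popescu desingularization, $A$ is a filtered colimit of smooth finitely generated $\R$-subalgebras $B$. For $B$ chosen large enough to contain $f$ and so that the restricted ordering of $\Frac(B)$ still satisfies $f<0$, the classical Artin--Lang theorem applied to the smooth affine $\R$-variety $\Spec(B)$ yields an $\R$-point $x\in\Spec(B)(\R)$ with $f(x)<0$. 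The remaining step is to show that, for suitable $B$, the point $x$ necessarily arises from a point of $K^G$: by including in $B$ enough $G$-invariant germs in $A$ to separate points of $L^G$, and applying a compactness argument over the semi-analytic compact set $L^G$ together with the $G$-equivariant version of Zame's description of prime ideals \cite[Theorem~4.7]{Zame}, one forces $x$ to factor through an $\R$-algebra homomorphism $A\to\R$, which corresponds to evaluation at a point $s\in L^G$. Shrinking $L$ toward $K$ localises $s$ into $K^G$, giving the desired contradiction.

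The main obstacle is precisely this last lifting step: Artin--Lang outputs an abstract $\R$-point of the algebraic approximation $\Spec(B)$, and nothing a priori guarantees that $x$ comes from a geometric point of $K^G$ rather than from a spurious real point unrelated to $K$. Making the passage from $B$ to $A$ rigorous will likely require an iterative refinement of $B$ together with an approximation argument adapted to the Stein setting, perhaps in the spirit of the bump/compactness method used in Proposition~\ref{killunram}. An alternative route would bypass Artin--Lang and instead analyze the real spectrum $\Sper(A)$ directly via Scheiderer's real \'etale formalism \cite{Scheiderer}, proving that every ordering of $F$ specializes to a point of $K^G$, at which $f\geq 0$ by hypothesis.
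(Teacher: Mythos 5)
Your strategy (Artin--Schreier plus Artin--Lang on a finitely generated approximation of $\cO(L)^G$ obtained from N\'eron--Popescu) has a genuine gap, and it is exactly the one you flag yourself: the lifting step. The Artin--Lang theorem produces a real point $x\in\Spec(B)(\R)$ with $f(x)<0$, but the restriction map $\Hom_{\R\textrm{-alg}}(A,\R)\to\Hom_{\R\textrm{-alg}}(B,\R)$ is very far from surjective, and no amount of enlarging $B$ within the filtered system forces $x$ to factor through $A=\cO(L)^G$: the real spectrum of $A$ is vastly larger than $L^G$, and the needed statement --- that an ordering of $\Frac(A)$ making $f$ negative forces $f<0$ at an actual point of $K^G$ --- is precisely the Artin--Lang property for rings of analytic germs on compacta. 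That property is true here, but it is the main content of the earlier analytic solutions of Hilbert's 17th problem in the compact case (Ruiz, Jaworski), proved by genuinely analytic arguments; it cannot be obtained by ``a compactness argument over $L^G$ together with Zame's description of prime ideals,'' since Zame's theorem describes prime ideals of $\cO(L)$, not orderings or points of the real spectrum. Your fallback suggestion (show every ordering of $F$ specializes into $K^G$ via $\Sper$) restates the same difficulty rather than resolving it.

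For comparison, the paper avoids orderings and Artin--Lang entirely. It adjoins $\sqrt{-f}$: by Proposition~\ref{eqcatG} the finite \'etale algebra $\cM(S)^G[x]/\langle x^2+f\rangle$ corresponds to a $G$-equivariant analytic covering $p:T\to S$, and the nonnegativity of $f$ on $S^G$ forces $T^G\subset\{f=0\}$, which is nowhere dense. Corollary~\ref{cdfieldGcor} (resting on the comparison theorem) then gives that $\cM(L)^G$ has finite cohomological dimension for a suitable $G$-orbit $L$ of components of $p^{-1}(K)$, hence is not formally real; since $\cM(K)^G\subset\cM(L)^G\subset\cM(K)^G[x]/\langle x^2+f\rangle$, standard quadratic form theory (Lam) yields that $f$ is a sum of squares in $\cM(K)^G$. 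So the two routes are genuinely different --- yours would, if completed, reprove the qualitative statement by the ``elementary'' method the paper attributes to Jaworski, whereas the paper's cohomological route is what makes the quantitative Theorem~\ref{soscx} possible --- but as written your proof is incomplete at its decisive step.
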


\begin{proof}
Using \cite[8, \S 1.3, Proposition]{GRCoherent}, we may replace $S$ by its normalization and hence assume that it is normal.
Let $K'$ be a Stein compact neighborhood of~$K$ in $M$ such that $f\in\cO(K')^G$ (see Lemma \ref{semianal} (ii)). By compactness of $K$, we may assume that $K'$ has finitely many connected components. Replacing $K$ with a $G$-orbit of connected components of $K'$, we may assume that $K/G$ is connected.

We may suppose that $f\neq 0$. After shrinking $S$, we may assume that $S/G$ is connected, that $f\in\cO(S)^G$, and that $f$ is nonnegative on $S^G$. 
Let $p:T\to S$ be the $G$\nobreakdash-equivariant analytic covering of normal $G$-equivariant Stein spaces associated with the finite \'etale $\cM(S)^G$-algebra ${F:=\cM(S)^G[x]/\langle x^2+f\rangle}$ by Proposition \ref{eqcatG}. The nonnegativity hypothesis on $f$ and the fact that $-f$ is a square in $\cM(T)^G$,  hence in~$\cO(T)^G$ by normality of $T$, imply that $T^G$ is contained in the nowhere dense closed analytic subset $\{f=0\}$ of~$T$.  
Let $L\subset T$ be a $G$\nobreakdash-orbit of connected components of $p^{-1}(K)$. It is a $G$-invariant Stein compact subset of $T$ by \cite[V, \S 1.1, Theorem~1~d)]{GRStein}.  By Corollary~\ref{cdfieldGcor}, the field~$\cM(L)^G$ has finite cohomological dimension, and hence cannot be ordered (see e.g. \cite[Remark 7.5]{Scheiderer}).

 As $\cM(K)^G\subset\cM(L)^G\subset\cM(K)^G[x]/\langle x^2+f\rangle$, one has $\cM(L)^G=\cM(K)^G$ or $\cM(L)^G=\cM(K)^G[x]/\langle x^2+f\rangle$.  In the first case,  $f$ is a sum of squares in $\cM(K)^G$ by \cite[VIII, Theorem 1.10 and Proposition 1.1 (2)]{Lam}. In the second case,  $f$ is a sum of squares in $\cM(K)^G$ by \cite[VIII, Theorem 1.10 and Basic Lemma~1.4]{Lam}. 
\end{proof}

\begin{rem}
Theorem \ref{sosArtincx} can be proven by the more elementary methods of~\cite{Jaworski2}, but one could not recover in this way the quantitative results of Theorem~\ref{soscx}.
\end{rem}

\subsection{Sums of few squares}

If $A$ is a ring, we let $\cd_2(A)$ denote the \'etale cohomological $2$-dimension of $\Spec(A)$,  i.e.\ the largest integer $n$ such that there exists a $2$-primary torsion \'etale sheaf $\L$ on $\Spec(A)$ with $H^n_{\et}(\Spec(A),\L)\neq 0$ (or $+\infty$ if no such integer exists).

\begin{prop}
\label{propVoe}
Let $F$ be a field such that $\cd_2(F[x]/\langle x^2+1\rangle)\leq n$. If $f\in F$ is a sum of squares in $F$, then it is a sum of $2^n$ squares in~$F$.
\end{prop}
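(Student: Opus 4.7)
The plan is to reinterpret the assertion as a vanishing in Galois cohomology and then propagate this vanishing using the cohomological dimension hypothesis. I may assume $f\neq 0$.

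First, I would invoke the dictionary provided by \cite[Proposition~2.1]{Henselian}, which combines Voevodsky's resolution of the Milnor conjecture \cite{Voevodsky} with the Arason--Pfister Hauptsatz: for any integer $k\geq 0$ and any $f\in F^*$, the element $f$ is a sum of $2^k$ squares in $F$ if and only if the cup product $(-1)^k\cup(f)$ vanishes in $H^{k+1}(F,\Z/2)$. The hypothesis that $f$ is a sum of finitely many squares implies, after completing the expression with zero terms, that $f$ is a sum of $2^N$ squares for every sufficiently large $N$, so that $(-1)^N\cup(f)=0$ in $H^{N+1}(F,\Z/2)$ for $N\gg 0$. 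It then suffices to propagate this vanishing down to degree~$n+1$.

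If $-1$ is a square in $F$, then $F[x]/\langle x^2+1\rangle\simeq F\times F$, the hypothesis reduces to $\cd_2(F)\leq n$, and $H^{n+1}(F,\Z/2)=0$ automatically. Otherwise, $L:=F[x]/\langle x^2+1\rangle=F(\sqrt{-1})$ is a quadratic extension of $F$ with $\cd_2(L)\leq n$. I would then appeal to Arason's long exact sequence
\[
\cdots\to H^k(F,\Z/2)\xrightarrow{\cup(-1)}H^{k+1}(F,\Z/2)\xrightarrow{\res}H^{k+1}(L,\Z/2)\xrightarrow{\Tr}H^{k+1}(F,\Z/2)\xrightarrow{\cup(-1)}H^{k+2}(F,\Z/2)\to\cdots
\]
attached to the quadratic extension $F\subset L$. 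Since $H^k(L,\Z/2)$ vanishes for $k\geq n+1$, exactness forces the map $\cup(-1)\colon H^k(F,\Z/2)\to H^{k+1}(F,\Z/2)$ to be injective (and in fact bijective) for all $k\geq n+1$. Iterating this injectivity finitely many times starting from $(-1)^N\cup(f)=0$ would yield the desired vanishing $(-1)^n\cup(f)=0$ in $H^{n+1}(F,\Z/2)$.

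The main work, conceptually, lies in the cohomological interpretation of sums of $2^k$ squares recalled in the first paragraph, which rests on the deep Milnor conjecture; once this dictionary is taken for granted, the remaining descent through Arason's exact sequence is a routine exactness argument.
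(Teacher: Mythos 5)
Your argument is essentially the paper's: the same dictionary from \cite[Proposition~2.1]{Henselian} together with Voevodsky's theorem, and the same descent of the vanishing of $\{f\}\cdot\{-1\}^N$ through the exact sequence attached to the quadratic \'etale algebra $F':=F[x]/\langle x^2+1\rangle$ (the paper extracts exactly the three-term piece $H^N(F')\to H^N(F)\xrightarrow{\cdot\{-1\}}H^{N+1}(F)$ of your Arason sequence from $0\to\Z/2\to\Z/2[\Gamma]\to\Z/2\to 0$, which also absorbs your case where $-1$ is a square without a separate argument). Your route to the initial vanishing in high degree --- padding the given sum of squares to $2^N$ terms and invoking the elementary direction of the dictionary --- is a legitimate and arguably cleaner alternative to the paper's appeal to Arason's Satz~3 on totally positive elements; both are fine.

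The one point you must add is the characteristic~$2$ case, which your argument does not cover: there $x^2+1=(x+1)^2$, so $F[x]/\langle x^2+1\rangle$ is non-reduced (in particular it is \emph{not} $F\times F$ even though $-1$ is a square, so your first case breaks down), and the Kummer-theoretic identification $F^*/(F^*)^2\simeq H^1(F,\Z/2)$ underlying \cite[Proposition~2.1]{Henselian} requires $\car(F)\neq 2$. The fix is the paper's first line: in characteristic~$2$ a sum of squares is itself a square, so the conclusion is trivial, and one may assume $\car(F)\neq 2$ throughout.
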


\begin{proof}
If $\car(F)=2$, then sums of squares in $F$ are squares in $F$, so we may assume that $\car(F)\neq 2$.
We may also assume that $f\neq 0$. Denote by $\{a\}\in H^1(F,\Z/2)$ the class induced by $a\in F^*$ via the Kummer isomorphism $F^*/(F^*)^2\isoto H^1(F,\Z/2)$.

As $f$ is a sum of squares, it is positive with respect to all the field orderings of~$F$. By a theorem of Arason \cite[Satz 3]{Arason},
the class $\{f\}\cdot\{-1\}^N\in H^{N+1}(F,\Z/2)$ vanishes for $N\gg0$. Consider the \'etale $F$-algebra $F':=F[x]/\langle x^2+1\rangle$ and set $\Gamma:=\Gal(F'/F)$. For all $N\geq 0$, the short exact sequence
$$0\to \Z/2\to\Z/2[\Gamma]\to\Z/2\to 0$$
of $\Gamma$-modules induces an exact sequence
$$H^N(F',\Z/2)\to H^N(F,\Z/2)\xrightarrow{\cdot\{-1\}}H^{N+1}(F,\Z/2).$$
A decreasing induction on the degree shows that $\{f\}\cdot\{-1\}^N=0$ for all $N\geq n$. The Milnor conjectures proven by Voevodsky \cite{Voevodsky} now imply that $f$ is a sum of~$2^n$ squares in $F$ (see \cite[Proposition~2.1]{Henselian}).
\end{proof}

The next theorem is a quantitative improvement of Theorem \ref{sosArtincx} in the spirit of Pfister's theorem \cite[Theorem 1]{Pfister}.

\begin{thm}
\label{soscx}
Let $K$ be a $G$-invariant Stein compact subset of a reduced $G$\nobreakdash-equi\-variant Stein space $S$ of dimension $n$.  Let $f\in\cO(K)^G$ be nonnegative on a neighborhood of $K^G$ in $S^G$.  Then $f$ is a sum of $2^n$ squares~in~$\cM(K)^G$.
\end{thm}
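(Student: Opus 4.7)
The plan is to deduce the quantitative Theorem \ref{soscx} from the qualitative Theorem \ref{sosArtincx} by combining it with Proposition \ref{propVoe} and the cohomological dimension bounds of \S\ref{cdpar}.

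First I would carry out the preliminary reductions from the beginning of the proof of Theorem \ref{sosArtincx}: replace $S$ by its normalization (legitimate by \cite[8, \S 1.3, Proposition]{GRCoherent}), pass to a Stein compact neighborhood of $K$ on which $f$ is defined, and extract a $G$-orbit of connected components so as to assume that $S$ is normal and $K/G$ is connected. Under these hypotheses, $\cO(K)^G$ is an integral domain and $F := \cM(K)^G$ is a field. By Theorem \ref{sosArtincx}, $f$ is a sum of squares in $F$, so by Proposition \ref{propVoe} it suffices to establish the bound $\cd_2(F[x]/\langle x^2+1\rangle) \leq n$.

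Next I would identify $F[x]/\langle x^2+1\rangle$ with $\cM(K)$. Since $\sigma \in G$ acts $\C$-antilinearly on $\cO_S$, decomposing $g \in \cM(K)$ as $g = \tfrac{1}{2}(g + \sigma(g)) + \tfrac{1}{2}(g - \sigma(g))$ and noting that the second summand lies in $i\cdot \cM(K)^G$ yields the direct sum decomposition $\cM(K) = \cM(K)^G \oplus i\cdot\cM(K)^G$. Hence the $F$-algebra morphism $F[x]/\langle x^2+1\rangle \to \cM(K)$ sending $x \mapsto i$ is an isomorphism (by a dimension count, the sum being direct because $i \notin \cM(K)^G$), and it remains to bound $\cd_2(\cM(K))$.

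Finally, to bound $\cd_2(\cM(K))$, I would invoke the non-equivariant Corollary \ref{cdfieldcor}. The ring $\cO(K)$ is a free $\cO(K)^G$-module of rank~$2$, hence a product of at most two domains of the form $\cO(K')$, where $K'$ is a Stein compact subset of a connected component of $S$; correspondingly, $\cM(K)$ is a product of at most two fields of the form $\Frac(\cO(K'))$. Corollary \ref{cdfieldcor} yields $\cd(\Frac(\cO(K'))) \leq \dim(S) = n$ for each factor, whence $\cd_2(\cM(K)) \leq n$, completing the proof. The main obstacle is essentially bookkeeping: checking that the reductions yield a setting in which $\cM(K)^G$ is a field and in which the non-equivariant Corollary \ref{cdfieldcor} applies to each factor of $\cM(K)$; the analytic and algebraic core of the argument has already been encapsulated in Theorems \ref{sosArtincx} and \ref{cdfield}.
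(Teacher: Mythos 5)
Your proposal is correct and follows essentially the same route as the paper: reduce as in Theorem \ref{sosArtincx} to $S$ normal with $K/G$ connected, apply Theorem \ref{sosArtincx} to get that $f$ is a sum of squares in $F=\cM(K)^G$, identify $F[x]/\langle x^2+1\rangle$ with $\cM(K)$, bound its cohomological $2$-dimension by $n$ via Corollary \ref{cdfieldcor} applied to the connected components of $K$, and conclude with Proposition \ref{propVoe}. You merely spell out details the paper leaves implicit (the explicit isomorphism $F[x]/\langle x^2+1\rangle\simeq\cM(K)$ and the splitting of $\cM(K)$ into at most two fields), and these are correct.
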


\begin{proof}
Arguing as in the proof of Theorem \ref{sosArtincx}, we may assume that $S$ is normal and that $K/G$ is connected. Then $F:=\cM(K)^G$ is a field and ${F[x]/\langle x^2+1\rangle\simeq \cM(K)}$ has cohomological dimension $\leq n$ by Corollary~\ref{cdfieldcor} applied to the connected components of $K$. Apply Theorem \ref{sosArtincx} and Proposition~\ref{propVoe} to conclude. 
\end{proof}

\subsection{Real-analytic geometry}

Theorem \ref{soscx} has applications to real-analytic variants of Hilbert's $17$th problem. 
We follow the conventions of \cite{GMT} and refer to \cite[II, Definition 1.4]{GMT} for the definitions of \textit{real-analytic spaces} and \textit{real-analytic varieties}. 
If $K$ is a closed subset of a real-analytic space $M$, we denote by $\cO(K)$ (\resp $\cM(K)$) the ring of germs of real-analytic functions (\resp of real-analytic meromorphic functions) in a neighborhood of $K$.

\begin{thm}
\label{sosreal}
Let $M$ be a real-analytic space of dimension $n$ with reduced local rings.
Let $K\subset M$ be a compact subset. Let $f\in \cO(K)$ be nonnegative on a neighborhood of~$K$ in $M$. Then $f$ is a sum of $2^n$ squares in $\cM(K)$.
\end{thm}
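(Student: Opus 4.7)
The plan is to reduce Theorem~\ref{sosreal} to the $G$-equivariant complex-analytic Theorem~\ref{soscx} via a complexification argument. The proof will proceed in four main steps.

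First, I would produce a $G$-equivariant complexification of the real-analytic data. The hypothesis that $M$ has reduced local rings is exactly what allows the classical results of Cartan \cite{Cartananal}, Grauert \cite{GrauertLevi} and Tognoli \cite{Tognoli} to apply: one obtains a reduced $G$-equivariant complex-analytic space $S$ of pure dimension $n$ together with a closed embedding $M\hookrightarrow S^G$ identifying $M$ (as a real-analytic space) with the fixed locus, and such that $S$ admits a $G$-invariant basis of open neighborhoods of $M$ which are Stein. After shrinking, we may assume $S$ is a $G$-equivariant Stein space of dimension $n$. Since $K\subset M=S^G$ is compact, Grauert's theorem on Stein neighborhoods of totally real subsets (together with $G$-equivariance: intersect a Stein neighborhood with its image under $\sigma$) provides a basis of $G$-invariant Stein open neighborhoods of $K$ in $S$, so $K$ is a $G$-invariant Stein compact subset of $S$ in the sense of the paper.

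Second, I would lift $f$ to a $G$-invariant holomorphic function. By definition, $f\in\cO(K)$ comes from a real-analytic function on some open neighborhood $W$ of $K$ in $M$. After shrinking $S$ so that $S^G\subset W$, such a function extends uniquely to a germ of $G$-invariant holomorphic function along $S^G$; in particular, it defines an element $\tf\in\cO(K)^G$ (in the complex sense) whose restriction to the real locus is $f$. By construction $\tf$ is nonnegative on $K^G=K$, hence on a neighborhood of $K^G$ in $S^G$.

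Third, I would apply Theorem~\ref{soscx} directly to $(S,K,\tf)$. Since $S$ is a reduced $G$-equivariant Stein space of dimension $n$, we obtain an expression
\[
\tf = \sum_{i=1}^{2^n} \tilde{h}_i^2 \quad \text{in } \cM(K)^G,
\]
for suitable $G$-invariant meromorphic germs $\tilde{h}_i$ along $K$ in $S$.

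Fourth, I would restrict back to $M$. Under the identification of $G$-invariant meromorphic germs along $K\subset S^G$ with real-analytic meromorphic germs along $K\subset M$ (a standard consequence of the complexification construction, since a $G$-invariant meromorphic function on a $G$-invariant neighborhood of $K^G$ restricts to a real-analytic meromorphic function on the corresponding neighborhood in $M$, and conversely every such function arises this way by Taylor expansion along the real locus), the $\tilde{h}_i$ yield $h_i\in \cM(K)$ with $f=\sum_{i=1}^{2^n} h_i^2$, as required.

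The only step with real content is the first one: verifying that the paper's hypothesis (reduced local rings, pure dimension $n$) suffices both to construct a complex-analytic complexification $S$ with $S^G\supset M$ and to guarantee that every compact subset of $M$ is $G$-equivariantly Stein in $S$. Both are classical, but they are where all the work is located; the remaining steps are formal transports of structure.
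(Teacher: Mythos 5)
Your proposal is correct and follows essentially the same route as the paper: complexify $M$ to a reduced $G$-equivariant Stein space $S$ with $M\cong S^G$, verify that $K$ has a basis of $G$-invariant Stein neighborhoods, extend $f$ to a $G$-invariant holomorphic germ, and apply Theorem~\ref{soscx}. The only cosmetic difference is that the paper obtains $G$-invariance of the holomorphic extension $g$ by explicitly symmetrizing via $(g+\overline{g\circ\sigma})/2$ rather than by invoking uniqueness of the extension along the totally real locus.
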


\begin{proof}
By \cite[III, Theorems 3.6 and~3.10]{GMT}, there exist a reduced $G$-equivariant Stein space $S$ of dimension $n$, and an isomorphism $M\isoto S^G$ of real-analytic spaces. In addition, there exists a proper injective $G$-equivariant holomorphic map $i:S\to \C^{N}$ for $N\gg0$ (see \cite[V, Theorem 3.7]{GMT}).

The compact subset $i(K)\subset \R^N$ admits a basis of Stein neighborhoods in $\C^N$ (see \cite[Lemma 5]{SiuNoeth}) which we may choose to be $G$-invariant as the intersection of two Stein open subsets is Stein (see \cite[p.~127]{GRStein}). We deduce that $K$ admits a basis of $G$\nobreakdash-invariant Stein neighborhoods in $S$ (use \cite[V, \S 1.1, Theorem~1~d)]{GRStein}). By \cite[III, Proposition 1.8]{GMT}, one may choose one such neighborhood~$S$ with the property that~$f$ extends to a holomorphic map $g:S\to \C$. After replacing $g$ with   $(g+\overline{g\circ \sigma})/2$, we may assume that it is $G$-equivariant. 
One may now apply Theorem \ref{soscx} to the function $g$ to conclude.
\end{proof}

\begin{rem}
Theorem \ref{sosreal} applies to normal real-analytic varieties of pure dimension by \cite[IV, Proposition 3.8]{GMT}, hence to real-analytic manifolds.
\end{rem}

\bibliographystyle{myamsalpha}
\bibliography{compactStein}

\end{document}